\documentclass[reqno,12pt]{amsart}
\usepackage{bbm}
\usepackage[all,pdf]{xy}
\usepackage{epsfig}
\usepackage{amsmath}
\usepackage{amssymb}
\usepackage{amscd}
\usepackage{graphicx}
\usepackage[colorlinks=true]{hyperref}
\hypersetup{urlcolor=red, citecolor=blue}
\makeatletter
\@namedef{subjclassname@2020}{%
	\textup{2020} Mathematics Subject Classification}

\topmargin=0pt

\oddsidemargin=0pt

\evensidemargin=0pt

\textwidth=15cm

\textheight=22cm

\raggedbottom

\overfullrule5pt

\newtheorem{thm}{Theorem}[section]
\newtheorem{lem}[thm]{Lemma}

\newtheorem{cl}[thm]{Claim}
\newtheorem{prop}[thm]{Proposition}

\newtheorem{ques}[thm]{Question}
\newtheorem{cor}[thm]{Corollary}
\newtheorem{defn}[thm]{Definition}

\newtheorem{rem}[thm]{Remark}

\def\B {\mathcal B}
\parskip 1.0ex
\numberwithin{equation}{section}

\begin{document}
	\title[Ergodic measures in minimal systems]{Ergodic measures in minimal group actions with finite topological sequence entropy}
	
	\author[Chunlin Liu, Xiangtong Wang and Leiye Xu]{Chunlin Liu, Xiangtong Wang and Leiye Xu}
	
	\address{C. Liu: CAS Wu Wen-Tsun Key Laboratory of Mathematics, University of Science and Technology of China, Hefei, Anhui, 230026, P.R. China}
	
	\email{lcl666@mail.ustc.edu.cn}	
	
	\address{X. Wang: CAS Wu Wen-Tsun Key Laboratory of Mathematics, University of Science and Technology of China, Hefei, Anhui, 230026, P.R. China}
	
	\email{wxt2020@mail.ustc.edu.cn}
	
	\address{L. Xu: CAS Wu Wen-Tsun Key Laboratory of Mathematics, University of Science and Technology of China, Hefei, Anhui, 230026, P.R. China}
	
	\email{leoasa@mail.ustc.edu.cn}

	\subjclass[2020]{Primary: 28D20, 37A35}

	\keywords{Sequence entropy, Ergodic measure, Minimal system
	}
	
	\thanks{}
	\begin{abstract} Let $G$ be an infinite discrete countable group and  $(X,G)$ be  a minimal $G$-system. In this paper, we prove  the  supremum of topological sequence entropy  of $(X,G)$ is not less than $\log(\sum_{\mu\in\mathcal{M}^e(X,G)}e^{h_\mu^*(X,G)})$. If additionally $G$ is abelian then there is a constant $K\in\mathbb{N}\cup\{\infty\}$ with $\log K\le h_{top}^*(X,G)$ such that 
		$\nu(\{y\in H:|\pi^{-1}(y)|=K\})=1$ where $(H,G)$ is the maximal equicontinuous factor of $(X,G)$,  $\pi:(X,G)\to (H,G)$ is the factor map and $\nu$ is the Haar measure on $H$.
	\end{abstract}
	
	\maketitle

	\section{Introduction}
	Throughout this paper, 	
	we always assume that  $G$ is an infinite discrete countable group, unless otherwise stated. Meanwhile, we always assume that $X$ is a compact metric
	space with a metric $d$ and $\mathcal{B}_X$ is the Borel $\sigma$-algebra of $(X,d)$. Recall that  by a \textbf{$G$-system}  we mean a pair $(X,G)$, where $G$ acts on $X$ as a group of homeomorphisms. By a \textbf{$G$-measure preserving system} we mean a triple $(X,\mu,G)$, where $(X,G)$ is a $G$-system and $\mu$ is a $G$-invariant Borel probability measure on $X$.

	The notion of sequence entropy for a measure preserving map was introduced by Kushnirenko \cite{Ku},  it was shown
	that an ergodic measure preserving transformation has discrete spectrum if and only if
	its sequence entropy is zero for any sequence. After that
	the topological sequence entropy for a $\mathbb{Z}$-system was   introduced by
	Goodman \cite{G}. A system is null if the sequence entropy of the system is zero for any sequence. It was shown that if  a  minimal $\mathbb{Z}$-system is null then it is uniquely ergodic, has discrete spectrum, mean equicontinuous and is an almost one to one extension of its
	maximal equicontinuous factor \cite{HLSY}. The definition of  topological sequence entropy (resp. measure theoretic sequence entropy) can  similarly be extended to $G$-systems (resp. $G$-measure preserving system). The  supremum of  topological sequence entropy for $G$-system is $\log K$ for some $K\in\mathbb{N}\cup\{\infty\}$ \cite{HLSY,HY}. In the case $G$ is amenable, the  supremum of measure theoretic sequence entropy for $G$-system is $\log K$ for some $K\in\mathbb{N}\cup\{\infty\}$ \cite{HY,LY}. In this paper, we will show this is true for all $G$-measure preserving systems (see Theorem \ref{th-3}).
	
	In \cite{Q}, Qiu showed that if a minimal $\mathbb{Z}$-system has  the  supremum of topological sequence entropy $\log K$ for some $K\in\mathbb{N}$, then there is $k\le K$ such that 
	$\nu(\{y\in H:|\pi^{-1}y|=k\})=1$ where $(H,\mathbb{Z})$ is the maximal equicontinuous factor, $\pi$ is the factor map and $\nu$ is the Haar measure on $H$. 
	The following result  generalizes  Qiu's result to  abelian group actions.
	\begin{thm}\label{thm-5}
		Let $G$ be an infinite discrete countable abelian group and  $(X,G)$ be  a minimal  $G$-system. Then there exists $K\in\mathbb{N}\cup\{\infty\}$ with $\log K\le h_{top}^*(X,G)$ such that 	$$\nu(\{y\in H:|\pi^{-1}(y)|=K\})=1,$$
		where $h_{top}^*(X,G)$ is the supremum of topological sequence entropy, $(H,G)$ is the maximal equicontinuous factor of $(X,G)$, $\pi$ is the factor map and $\nu$ is the Haar measure of $H$.
	\end{thm}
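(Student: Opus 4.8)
The plan is to split the statement into two parts: first, that the fiber-cardinality function $y\mapsto|\pi^{-1}(y)|$ is $\nu$-almost everywhere equal to a constant $K\in\mathbb N\cup\{\infty\}$; and second, that this constant satisfies $\log K\le h_{top}^*(X,G)$. For the first part I would argue by ergodicity. Since $(H,G)$ is minimal and equicontinuous and $G$ is abelian, $H$ is a compact abelian group on which $G$ acts by translations through a homomorphism with dense image, so the Haar measure $\nu$ is the unique invariant measure and $(H,\nu,G)$ is ergodic. The map $\pi$ is $G$-equivariant and each $g\in G$ acts on $X$ as a homeomorphism, so $g$ carries $\pi^{-1}(y)$ bijectively onto $\pi^{-1}(gy)$; hence $y\mapsto|\pi^{-1}(y)|$ is $G$-invariant. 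Each level set $\{y:|\pi^{-1}(y)|\ge n\}$ is the continuous image of the Borel set $\{(x_1,\dots,x_n)\in X^n:\pi(x_1)=\dots=\pi(x_n),\ x_i\neq x_j\text{ for }i\neq j\}$ under $(x_1,\dots,x_n)\mapsto\pi(x_1)$, hence analytic and therefore $\nu$-measurable. Being $G$-invariant, each such set has $\nu$-measure $0$ or $1$, so the function is $\nu$-a.e. constant, equal to some $K\in\mathbb N\cup\{\infty\}$.

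It remains to prove $\log K\le h_{top}^*(X,G)$. By the results recalled in the introduction the supremum of topological sequence entropy equals $\log K_{\mathrm{seq}}$ for some $K_{\mathrm{seq}}\in\mathbb N\cup\{\infty\}$; moreover $K_{\mathrm{seq}}$ is the largest cardinality of a tuple of points admitting pairwise disjoint neighborhoods with an infinite independence set. Thus it suffices to show that for every finite $n\le K$ there exist $n$ points $x_1,\dots,x_n\in X$, pairwise disjoint open sets $U_1\ni x_1,\dots,U_n\ni x_n$, and an infinite $S\subseteq G$ that is an independence set for $(U_1,\dots,U_n)$, meaning $\bigcap_{s\in S'}s^{-1}U_{\sigma(s)}\neq\emptyset$ for every finite $S'\subseteq S$ and every $\sigma:S'\to\{1,\dots,n\}$; taking the supremum over $n\le K$ then gives $h_{top}^*(X,G)\ge\log K$. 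I would choose the $x_i$ inside a single fiber $\pi^{-1}(y_0)$ of cardinality at least $n$, which exists because $\nu$-a.e. fiber has $K\ge n$ points. Lying in one fiber over the maximal equicontinuous factor, these points form a regionally proximal tuple.

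The crux is the construction of the independence set, and this is where equicontinuity of $H$ enters decisively. Realizing a finite pattern $\sigma:\{s_1,\dots,s_m\}\to\{1,\dots,n\}$ amounts to finding $z\in\bigcap_{j}s_j^{-1}U_{\sigma(s_j)}$, i.e.\ an orbit visiting prescribed ``sheets'' over the fiber at prescribed times. Because $H$ is equicontinuous it cannot separate the sheets, so along suitable return times $g$ with $gy_0$ close to $y_0$ the images $gx_1,\dots,gx_n$ remain a near-copy of the fiber; combining this rigidity of the base with minimality of $(X,G)$ one selects the times $s_j$ inductively and shadows the required pattern, producing points that realize every $\sigma$ and, in the limit, an infinite independence set, while the case $K=\infty$ follows by letting $n\to\infty$. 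I expect the main obstacle to be precisely the upgrade from the \emph{pairwise} regional proximality of the fiber points to the \emph{joint} combinatorial independence of the full $n$-tuple: a single infinite $S$ must work simultaneously for all $n$ neighborhoods and all patterns, which demands a careful diagonal pattern-realization argument rather than a pairwise one. An alternative route avoiding this obstacle is measure-theoretic: using the inequality $h_{top}^*(X,G)\ge\log\big(\sum_{\mu\in\mathcal M^e(X,G)}e^{h_\mu^*(X,G)}\big)$, one would show that the conditional measures of the ergodic measures partition the $K$-point fibers and that $\sum_{\mu\in\mathcal M^e(X,G)}e^{h_\mu^*(X,G)}\ge K$, reducing the claim to computing the relative sequence entropy of a maximal finite-to-one extension of an equicontinuous system.
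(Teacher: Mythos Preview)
Your first paragraph---that ergodicity of $(H,\nu,G)$ forces $y\mapsto|\pi^{-1}(y)|$ to be $\nu$-a.e.\ constant---is correct and is exactly how the paper begins.

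For the inequality $\log K\le h_{top}^*(X,G)$, however, neither of your routes is complete, and the paper's argument is genuinely different from both. A minor point first: the tuples characterising sequence entropy are \emph{IN}-tuples (arbitrarily large finite independence sets), not \emph{IT}-tuples (a single infinite independence set); IT implies IN, so your Route~A would suffice if it worked, but the identification of $K_{\mathrm{seq}}$ with the maximal IT-tuple length is incorrect. More seriously, you correctly isolate the real obstacle---upgrading pairwise regional proximality of fiber points to a joint statement about the $n$-tuple---but then leave it unresolved. Your heuristic that equicontinuity of $H$ keeps ``$gx_1,\dots,gx_n$ a near-copy of the fiber'' is exactly what fails: equicontinuity lives on $H$, not on $X$, and the fiber can and does scramble under the action, so the inductive pattern-realization you sketch does not go through without a further structural input.

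The paper supplies that input and proceeds differently. It never constructs independence sets. Instead it proves a direct separated-set criterion (Proposition~\ref{p-1}): if for every $\gamma>0$ there is $\epsilon>0$ such that, given any open $U_1,\dots,U_M$, one can find $g\in G$ and a subset $\mathcal M$ of density $\ge 1-\gamma$ with $K$ points in each $U_m$, $m\in\mathcal M$, that become $\epsilon$-separated under $g$, then $h_{top}^*(X,G)\ge\log((1-\gamma)K)$. The verification uses minimality to push all $U_m$ into a single small $U$ via elements $g_m$, an averaging argument over Haar measure to find $y_0\in H$ with most $g_my_0$ landing in the set where $\pi^{-1}$ is $2\epsilon$-separated of size $\ge K$, and---crucially---Auslander's theorem (Theorem~\ref{rp}): for abelian $G$ and minimal $(X,G)$, pairwise $RP_2$ implies joint $RP_n$. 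Since all the points $g_m^{-1}z_{m,k}$ lie in the single fiber $\pi^{-1}(y_0)$, they are pairwise regionally proximal, hence jointly in $RP_{|\mathcal M|K}$; this is precisely the pairwise-to-joint upgrade you were missing, and it produces a single $h\in G$ collapsing all of them near $U$ while $h^{-1}$ re-separates each $K$-block. Your Route~B via $h_{top}^*\ge\log\sum_\mu e^{h_\mu^*}$ is also not how the paper proceeds here; that inequality is proved independently (and with considerably more work) in Section~4, and deducing $\sum_\mu e^{h_\mu^*}\ge K$ from the fiber structure would itself require a nontrivial argument you have not supplied.
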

	Theorem \ref{thm-5} shows that if an infinite discrete countable abelian group action $(X,G)$ is minimal and has $h_{top}^*(X,G)=\log K$, then it has no more than $K$ different ergodic measures. In fact, it is true for  all minimal $G$-systems.  This conclusion comes from the following theorem.
	\begin{thm}\label{Thm-B}Let $G$ be an infinite discrete countable group and $(X,G)$ be a minimal $G$-system. Then  $h_{top}^*(X,G)\ge\log \sum_{\mu\in\mathcal{M}^e(X,G)}e^{h_\mu^*(X,G)}$, where $\mathcal{M}^e(X,G)$ is the set of ergodic measures on $(X,G)$.
	\end{thm}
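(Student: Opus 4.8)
The plan is to establish the inequality through the combinatorial theory of independence sets for sequence entropy, producing a single finite open cover and a single sequence along which the number of realized itineraries grows at rate $\log\sum_i e^{a_i}$. Write $a_i=h_{\mu_i}^*(X,G)$ and, by Theorem \ref{th-3}, $K_i:=e^{a_i}\in\mathbb{N}\cup\{\infty\}$. It suffices to prove, for every finite subfamily $\{\mu_1,\dots,\mu_m\}\subseteq\mathcal{M}^e(X,G)$, that $h_{top}^*(X,G)\ge\log\sum_{i=1}^m K_i$, and then to take the supremum over finite subfamilies; the case $K_i=\infty$ is covered by the standard single-measure inequality $h_{top}^*(X,G)\ge h_{\mu_i}^*(X,G)$, so I may assume each $K_i$ is finite.

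First I would record two structural facts. Since $(X,G)$ is minimal, the support of every ergodic measure is all of $X$, and distinct $\mu_i$ are mutually singular with disjoint sets of generic points. Second, the realization of $a_i$ as measure sequence entropy can be converted into a combinatorial independence statement: for each $i$ there are a sequence $S_i\subseteq G$ and $K_i$ Borel sets $A^i_1,\dots,A^i_{K_i}$ of positive $\mu_i$-measure that are $\mu_i$-independent along $S_i$, meaning $\mu_i\big(\bigcap_{g\in F}g^{-1}A^i_{\sigma(g)}\big)>0$ for all large finite $F\subseteq S_i$ and all $\sigma\colon F\to\{1,\dots,K_i\}$. Using outer regularity together with full support, I would replace each $A^i_l$ by an open set $W^i_l$ of positive $\mu_i$-measure, so that the weaker topological independence $\bigcap_{g\in F}g^{-1}W^i_{\sigma(g)}\ne\emptyset$ persists.

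The heart of the argument is to merge the $m$ separate independence structures into one. I want a single finite open cover $\mathcal{U}$ resolving the enlarged label set $L=\bigsqcup_{i=1}^m\{1,\dots,K_i\}$, which has cardinality $\sum_i K_i$, together with a single sequence $S$ that is an independence set for this enlarged family: for every $n$ and every word $w=(w_1,\dots,w_n)\in L^n$ there should exist a point $x\in X$ whose $\mathcal{U}$-itinerary along the first $n$ terms of $S$ equals $w$. Granting this, the number of realized length-$n$ itineraries is at least $(\sum_i K_i)^n$, whence $h_{top}^S(X,G,\mathcal{U})\ge\log\sum_i K_i$ and the theorem follows. Producing such $x$ is where minimality is essential: a prescribed word assigns to each time a choice of component $\mu_i$ and an internal pattern, and I would build $x$ by steering one orbit through consecutive blocks that are generic-like for the prescribed components, using the ergodic theorem to locate orbit segments realizing each component's internal independence pattern and minimality to concatenate them into a single orbit that hits the required open sets at the required times.

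The main obstacle is precisely this merging step. The difficulty is that the sets $W^i_l$ for different $i$ need not be disjoint and all meet every nonempty open set, so the label \emph{which component} cannot be read off from a static partition of $X$; it must instead be encoded dynamically through the distinct frequency statistics of the $\mu_i$, which are separated by finitely many continuous functions. Thus I expect the technical core to be a steering and concatenation lemma producing, for each target word, an orbit segment visiting the correct open sets at the sequence times, with the count of distinguishable segments growing like $(\sum_i K_i)^n$; reconciling the measure-theoretic independence, which is valid only along each $S_i$ separately, with one common sequence $S$ by a diagonal extraction is the delicate point. Once the combined independence set is in hand, passing to $h_{top}^*(X,G)$ and removing the finiteness and subfamily restrictions is routine.
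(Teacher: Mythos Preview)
Your outline has a genuine gap at the merging step, and the gap is not merely technical. The tools you invoke---``using the ergodic theorem to locate orbit segments realizing each component's internal independence pattern and minimality to concatenate them''---are essentially amenable-group tools: pointwise ergodic averages along F{\o}lner sets and block-specification of orbits. For a general infinite discrete countable group $G$ there is no pointwise ergodic theorem and no notion of ``orbit segment of length $n$'' to concatenate, so the steering argument has no engine. Moreover, the preliminary step ``convert $h_{\mu_i}^*=\log K_i$ into $K_i$ sets that are $\mu_i$-independent along some $S_i$'' is not a known equivalence: Theorem~\ref{th-3} gives $K_i$ as the a.e.\ cardinality of the support of the disintegration over the Kronecker factor, which is quite different from producing an independence set of positive density.

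The paper's route avoids both obstacles. It works in the product $\overline X=X^{\sum_i k_i}$ with the product measure $\overline\mu=\mu_1^{k_1}\times\cdots\times\mu_I^{k_I}$, takes the ergodic decomposition of $\overline\mu^M$, and uses the key structural Lemma~\ref{l-2} (proved via Hilbert--Schmidt operators, with no amenability needed) that the invariant $\sigma$-algebra of a product of invariant measures is contained in the product of the individual Kronecker $\sigma$-algebras. This lets one write each ergodic component of $\overline\mu^M$ as an integral of products of the Kronecker disintegrations $\mu_{i,x}$, and Claim~\ref{cl-6} then shows that most such components charge the ``well-separated'' set. Minimality enters only to guarantee $\operatorname{supp}\overline\mu^M=X^{M\sum_i k_i}$, so that some ergodic component sees $U_1^{\sum k_i}\times\cdots\times U_M^{\sum k_i}$; a single group element $h$ then moves that open set into the separated region, and Proposition~\ref{p-1} converts this into the sequence-entropy lower bound. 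No ergodic theorem, no orbit concatenation, and no independence-set bookkeeping across distinct measures is required.
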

	\begin{rem}Theorem \ref{thm-5} is not true for all group actions. For example, put $\mathbb{T}=\mathbb{R}/\mathbb{Z}=[0,1)$. Define $T,S$ on $\mathbb{T}$ by $Tx=x+\alpha$ and $Sx=x^2$ for $x\in[0,1)$ where $\alpha$ is an irrational number. Let $G=\langle T,S\rangle$. Since $(\mathbb{T},T)$ is minimal, so is $(\mathbb{T},G)$. The maximal equicontinuous factor of $(\mathbb{T},G)$ is trivial and the size of the fiber is $\infty$. However Glasner showed  in \cite{G2}  that  $(\mathbb{T},G)$ is null.
	\end{rem}
	
	The concept of tameness was introduced by K\"ohler in \cite{K}. Here we follow the definition
	of Glasner \cite{G3}. A topological dynamical system is said to be tame if its enveloping
	semigroup is separable and Fr\'echet, and it is said to be non-tame otherwise. It is known
	that a minimal null system is tame \cite{KH}. A structure theorem for an almost automorphic tame system
	has been established by Fuhrmann, Glasner, J\"ager and Oertel in \cite{FG}, i.e., an almost automorphic tame system is regular. Here an almost automorphic topological group action means a minimal group action which is an almost one-to-one extension of its maximal equicontinuous factor. If a minimal tame group action $(X, G)$ admits an invariant probability measure, Glasner showed that ($X,G)$ is almost automorphic \cite{G1}. In this paper, we show a similar result for all tame  minimal $G$-systems.
	\begin{thm}\label{thm-A}Suppose that $(X, G)$ is a tame minimal $G$-system. Let $(H,G)$ be the maximal equicontinuous factor of $(X,G)$ and $\pi$ be the factor map. Assume $K\in\mathbb{N}$ and  $\pi$ is almost $K$ to one. Then 
		$\nu(\{h\in H:|\pi^{-1}(h)|=K\})=1$ where $\nu$ is the Haar measure of $H$.
	\end{thm}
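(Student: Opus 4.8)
The plan is to study the fibre-counting function $N\colon H\to\mathbb N\cup\{\infty\}$, $N(y)=|\pi^{-1}(y)|$, and to prove that $N=K$ holds $\nu$-almost everywhere; this is exactly the assertion $\nu(\{y:N(y)=K\})=1$. First I would record two soft facts about $N$. It is $G$-invariant: since each $g\in G$ acts as a homeomorphism of $X$ and $\pi$ is equivariant, $N(gy)=|\pi^{-1}(gy)|=|g\,\pi^{-1}(y)|=N(y)$. It is also $\nu$-measurable, because for each $n$ the set $\{y:N(y)\ge n\}$ is the projection to $H$ of the Borel subset of $X^{n}$ consisting of $n$-tuples of distinct points with a common $\pi$-image, hence analytic and therefore $\nu$-measurable (in fact Borel, by Lusin--Novikov). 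As $(H,G)$ is minimal and equicontinuous it is uniquely ergodic with Haar measure $\nu$ as its unique, hence ergodic, invariant measure, so the invariant measurable function $N$ is $\nu$-a.e.\ constant: $N\equiv c$ $\nu$-a.e.\ for some $c\in\mathbb N\cup\{\infty\}$. The whole problem is thus to identify $c=K$.

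Next I would prove $c\ge K$ using only upper semicontinuity. Since $\pi$ is a closed continuous surjection of compact metric spaces, the set-valued map $F(y)=\pi^{-1}(y)$ is upper semicontinuous, so by Fort's theorem its points of continuity in the Hausdorff metric form a residual set $C\subseteq H$. The hypothesis that $\pi$ is almost $K$ to one means $R:=\{y:N(y)=K\}$ is residual, whence $R\cap C\neq\varnothing$; fix $y_0\in R\cap C$ and write $F(y_0)=\{x_1,\dots,x_K\}$ with the $x_i$ at pairwise distance $>2\varepsilon$. Continuity of $F$ at $y_0$ yields a neighbourhood $W\ni y_0$ on which $F(y)\subseteq\bigcup_i B(x_i,\varepsilon)$ while $F(y)\cap B(x_i,\varepsilon)\neq\varnothing$ for every $i$; as the balls are disjoint this forces $N(y)\ge K$ for all $y\in W$. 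By minimality of $(H,G)$ the Haar measure has full support, so $\nu(W)>0$, and combined with $N\equiv c$ $\nu$-a.e.\ this gives $c\ge K$.

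The main difficulty, and where tameness enters, is the reverse inequality $c\le K$, i.e.\ ruling out oversized fibres on a set of positive Haar measure. Tameness cannot be bypassed: were $(X,G)$ to carry a $G$-invariant measure, Glasner's theorem \cite{G1} would force it to be almost automorphic and hence $K=1$, so for $K\ge2$ there is no invariant measure on $X$ and one cannot simply disintegrate a lift of $\nu$ along $\pi$. Instead I would argue topologically through the enveloping semigroup $E(X,G)$, which for a tame system is a Rosenthal compactum whose elements are Baire class one self-maps of $X$. Fixing a minimal left ideal $M\subseteq E(X,G)$ and an idempotent $u\in M$, the equicontinuous factor corresponds to the quotient $E(H,G)\cong H$, and the size of $\pi^{-1}(y)$ over $y=\pi(x)$ is governed by the action on $x$ of the subgroup of the Ellis group mapping to the identity of $H$. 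The plan is to show this algebraic fibre size is constant and equal to the generic topological value $K$, so that $\{N\ge K+1\}$ lies in the jump locus of the Baire-1 coordinate functions $x\mapsto\mathbbm{1}_{U}(ex)$.

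The crux is then to see that this jump locus is $\nu$-null rather than merely meager: two disjoint dense sets, the residual $R$ and the full-measure $\{N=c\}$, are not in themselves contradictory, so the heart of the matter is to promote the topological (Baire category) genericity of fibre size $K$ to measure genericity. I expect this to be the principal obstacle, and I would resolve it with the combinatorial description of tameness (Kerr--Li, Glasner--Megrelishvili): if $\{N\ge K+1\}$ had positive $\nu$-measure, then pushing $K+1$ distinct points of a fibre along the isometric minimal $G$-action on $H$ and invoking recurrence would manufacture an infinite independence set for a suitable pair of disjoint closed subsets of $X$, contradicting tameness. Hence $\nu(\{N\ge K+1\})=0$, giving $c\le K$; together with $c\ge K$ this proves $N=K$ $\nu$-a.e., which is the claim.
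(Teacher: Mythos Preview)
Your overall architecture is the same as the paper's: ergodicity of $(H,\nu,G)$ forces $N(y)=|\pi^{-1}(y)|$ to be $\nu$-a.e.\ equal to some constant $c$, and the whole content is the identification $c=K$. The inequality $c\ge K$ is immediate (indeed in the paper it is literally one line: ``since $K=\min_{y\in H}|\pi^{-1}(y)|$, one has $K_1\ge K$''), so your continuity-point argument, while correct, is more than is needed.

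The gap is in your treatment of $c\le K$. You correctly recognise that Baire-category smallness of the ``jump locus'' does not by itself yield $\nu$-nullness, and you then propose to ``push $K+1$ distinct points of a fibre along the isometric minimal $G$-action on $H$ and invoke recurrence'' to build an infinite independence set. This is the right target, but ``recurrence'' alone does not produce independence; the obstacle is that translating a single point $y_0\in H$ by elements of $G$ gives you no a~priori control over which of the $K+1$ preimages in $\pi^{-1}(gy_0)$ you land near. The paper supplies three concrete ingredients you do not mention: (i) Lusin's theorem to obtain a compact set $E\subset H$ of positive Haar measure on which $\pi^{-1}$ is continuous and every fibre has size $\ge K+1$; (ii) Veech's auxiliary minimal system $H'\subset 2^X$ of $K$-element subsets of fibres, together with the almost one-to-one map $\tau:H'\to H$, which lets one track coherently which $K$-tuple of preimages is being followed; and (iii) the key combinatorial lemma (Proposition~\ref{p-eliss}, taken from \cite{FG,HLSY2}) applied inside the compact topological group $E(H)$, which is what actually converts the condition $\nu(\cap_i V_i)>0$ into an infinite independence set. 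Step~(iii) is the real engine and is a nontrivial Baire-category/measure argument specific to topological groups; it is not a consequence of ordinary Poincar\'e or topological recurrence. Your digression into $E(X,G)$ as a Rosenthal compactum with Baire-1 elements is not used in the paper and, as you yourself note, only yields meagreness rather than $\nu$-nullness; it does not substitute for (iii).

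In short: your plan coincides with the paper's at the strategic level, but the decisive step---manufacturing the infinite independence set from a positive-measure set of large fibres---requires the specific machinery of Theorem~\ref{thm-h} (Lusin $+$ Veech's $H'$ $+$ the group-theoretic independence lemma in $E(H)$), and your sketch does not supply an alternative to it.
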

	
	Let $(X, G)$ be a $G$-system and $K\in\mathbb{N}$. We say that $(X, G)$ has no $K$-IT-tuple if for any
	tuple of  disjoint nonempty open subsets $(U_1, U_2,\cdots, U_K)$ there is not an infinite
	independence set for them, i.e. for any infinite set $S\subset G$, there is some $a\in \{1, 2,\cdots, K\}^S$
	such that
	$\cap_{t\in S}t^{-1}U_{a_t}=\emptyset.$ In \cite{KH}, Kerr and Li showed that a topological dynamical system is tame if and only if it has no $2$-IT-tuple. Therefore, Theorem \ref{thm-A} is immediately from the following theorem.
	\begin{thm}\label{thm-h} Suppose that $(X, G)$ is a minimal $G$-system. Let $(H,G)$ be the maximal equicontinuous factor of $(X,G)$ and $\pi$ be the factor map. Let $K_1\in\mathbb{N}\cup\{\infty\},K_2\in\mathbb{N}$ and $(X,G)$ satisfy the following conditions:
		\begin{itemize}
			\item[(1)]$\nu(\{y\in H:|\pi^{-1}(y)|=K_1\})=1$ where $\nu$ is the Haar measure of $H$;
			\item[(2)]$\pi$ is almost $K_2$ to one.
		\end{itemize}
		If $\lceil\frac{K_1}{K_2}\rceil\ge 2$,	then $(X,G)$ has non-trivial $K$-IT-tuple for $2\le K\le \lceil\frac{K_1}{K_2}\rceil, K\in\mathbb{N}$.
	\end{thm}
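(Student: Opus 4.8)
The plan is to fix $K$ with $2\le K\le\lceil K_1/K_2\rceil$ and to produce a single fibre $\pi^{-1}(y_*)$ containing $K$ pairwise distinct points $x_1,\dots,x_K$ that form an IT-tuple; since the $x_i$ are distinct the resulting tuple is automatically non-trivial, which is exactly what the theorem asks for. Throughout I will use that $(H,G)$ is minimal equicontinuous, so its Ellis semigroup $E(H)=\ol G$ is a compact metrizable group acting on $H$ by isometries for a suitable invariant metric $\rho$, and $\nu$ is the associated Haar measure. Two preliminary reductions are in order. First, since the residual value $K_2$ is the minimal fibre cardinality and condition (1) forces $|\pi^{-1}(y)|=K_1$ on a set of full $\nu$-measure, we get $K_2\le K_1$, and the hypothesis $\lceil K_1/K_2\rceil\ge 2$ gives $K_1>K_2$. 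Second, the set $\{y:|\pi^{-1}(y)|=K_1\}$ has full $\nu$-measure, hence is non-empty, so I may fix a point $y_*$ with $|\pi^{-1}(y_*)|=K_1$.

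Next I would extract the clustering of fibre points that carries all the combinatorial content. Using minimality (so $\ol{Gy_*}=H$) together with the density of the residual set $\{y:|\pi^{-1}(y)|=K_2\}$, I would choose $g_n\in G$ with $g_ny_*\to w$ for some generic point $w$ (so $|\pi^{-1}(w)|=K_2$); passing to a subsequence, which is legitimate since only the finitely many points of $\pi^{-1}(y_*)$ are involved, I arrange $g_n x\to\phi(x)$ for every $x\in\pi^{-1}(y_*)$, where $\phi:\pi^{-1}(y_*)\to\pi^{-1}(w)$ is a well-defined map into a set of exactly $K_2$ points. A pigeonhole argument on $\phi$, whose domain has $K_1$ points and whose range has $K_2$ points, produces a point $x_w\in\pi^{-1}(w)$ with $|\phi^{-1}(x_w)|\ge\lceil K_1/K_2\rceil\ge K$. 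Relabelling, I take $x_1,\dots,x_K\in\phi^{-1}(x_w)$; these are $K$ distinct points of $\pi^{-1}(y_*)$ with $g_n x_i\to x_w$ for all $i$. Fixing $\delta>0$ smaller than half the minimal pairwise distance between the $x_i$, I set $U_i=B(x_i,\delta)$, and the goal becomes the construction of an infinite independence set for $(U_1,\dots,U_K)$.

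The independence set would be built as a sufficiently sparse subsequence $S=\{g_{n_1}^{-1},g_{n_2}^{-1},\dots\}$ of the inverses of the collapsing elements, by an inductive realisation of all finite colourings. The base direction is under control: since each $g_n^{-1}$ acts on $H$ as a $\rho$-isometry and $g_ny_*\to w$, the base constraints for any colouring all reduce to requiring the $H$-coordinate to lie near $w$, which is compatible and non-empty. Concretely, at each stage I keep all points realising the already chosen colourings inside a small ball around $x_w$; to adjoin a new time $g_{n_{l+1}}^{-1}$ I must find, for each previously realised colouring (an open set $O$ near $x_w$) and each target colour $i$, a large index $n$ with $O\cap g_nU_i\neq\emptyset$, and then take $n_{l+1}$ in the finite intersection of these cofinite families of indices. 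Because $g_nx_i\to x_w$, the sets $g_nU_i$ accumulate at $x_w$, which handles targets meeting neighbourhoods of $x_w$; promoting this to the arbitrary nearby open sets $O$ is exactly the point where the strict inequality $K_1>K_2$ must be converted into genuine independence.

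The main obstacle is precisely this last step, and it is caused by the absence of any equicontinuity upstairs: a fixed $g_n^{-1}$ is a homeomorphism, but the family $\{g_n^{-1}\}$ has no uniform modulus of continuity, so one cannot control the image of a small ball near $x_w$ under $g_n^{-1}$ by metric estimates and thereby separate the $K$ sheets by hand. I expect to resolve this through the Ellis-semigroup structure rather than metrically: the multiplicity $\lceil K_1/K_2\rceil$ is the number of distinct points of $\pi^{-1}(y_*)$ that the generic collapse $\phi$ identifies to the single generic point $x_w$, and the corresponding elements of $E(X)$ lying over the identity of $\ol G$ act on the fibre so as to realise every pattern in $\{1,\dots,K\}^{S}$. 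Making this explicit---showing that a $K$-fold clustering at a generic fibre point forces an infinite independence set for $(U_1,\dots,U_K)$---is the technical heart of the argument, after which Kerr and Li's criterion (tame if and only if there is no $2$-IT-tuple) yields Theorem~\ref{thm-A} as stated.
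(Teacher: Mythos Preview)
Your proposal has a genuine gap at exactly the point you flag as ``the main obstacle.'' The collapsing sequence $g_n$ you construct witnesses only that the $x_i$ are pairwise \emph{proximal} (in fact regionally proximal, as they live in a common fibre); it does not by itself give an independence set. Concretely, the inductive step requires, for every already-realised open set $O$ near $x_w$ and every colour $i$, an index $n$ with $O\cap g_nU_i\neq\emptyset$. You know that $g_n x_i\to x_w$, so $g_nU_i$ meets every neighbourhood of $x_w$ eventually, but $O$ is an arbitrary small open set near $x_w$ produced at an earlier stage, not a full neighbourhood of $x_w$, and there is no reason the single image point $g_nx_i$ (or any point of $g_nU_i$) must land in $O$. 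Your suggestion to pass to the Ellis semigroup of $X$ is the right instinct, but you would need to show that the stabiliser of $y_*$ in $E(X)$ acts on $\pi^{-1}(y_*)$ so as to realise \emph{all} patterns---and this is precisely where some structural input beyond proximality is needed. As written, the argument stops at the decisive step.

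The paper's proof takes a rather different route and supplies that missing input by an explicit combinatorial/measure-theoretic lemma. Instead of working with points of $X$, it passes to Veech's hyperspace system $H'\subset 2^X$ (the closure of the generic fibres), so that $\tau:H'\to H$ is almost one-to-one and every element of $H'$ has exactly $K_2$ points (Lemma~\ref{N-1}). One then chooses $h_0\in H$ with $|\pi^{-1}(h_0)|=K_1$ and covers $L=(K-1)K_2+1$ distinct fibre points by $L'\ge K$ elements $A_1,\dots,A_{L'}\in\tau^{-1}(h_0)$. The core step---that $(A_1,\dots,A_{L'})$ is an IT-tuple in $H'$---is obtained by pushing the closed neighbourhoods $U_l$ down to $H$ and then lifting to the compact group $E(H)$, where Proposition~\ref{p-eliss} (a nontrivial Baire/Haar-measure lemma producing infinite independence sets from overlapping closed sets with positive-measure common intersection) applies; almost one-to-oneness of $\tau$ transfers the independence set back to $H'$. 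Finally a pigeonhole on the $K_2$ coordinate projections of $X^{K_2}\to M_{K_2}(X)$ extracts a non-trivial $K$-IT-tuple in $X$ via Proposition~\ref{p-ind}(4). In short, the paper replaces your hoped-for Ellis-semigroup argument by a concrete independence lemma on the group $E(H)$; without an analogue of Proposition~\ref{p-eliss}, your direct approach does not close.
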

	In \cite{HLSY2}, Huang, Lian, Shao and Ye showed that if a minimal system  under an amenable group action is an almost $N$ to one extension of its maximal equicontinuous factor and has no $K$-IT-tuple, then it has no more than $N(K-1)^N$ different ergodic measures. By using Theorem \ref{thm-h}, we can improve this result.
	\begin{thm}\label{thm-B}Suppose that $(X, G)$ is a minimal $G$-system. Let $(H,G)$ be the maximal equicontinuous factor of $(X,G)$ and $\pi$ be the factor map. Assume that $(X,G)$ satisfies the following conditions:
		\begin{itemize}
			\item[(1)] $\pi$ is almost $K_2$ to one for some $K_2\in\mathbb{N}$;
			\item[(2)] $(X,G)$ has no non-trivial  $K$-IT-tuple  for some $K\in\mathbb{N},K\ge 2$.
		\end{itemize}
		Then $(X,G)$ has no more than $K_2(K-1)$ different ergodic measures.
	\end{thm}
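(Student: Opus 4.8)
The plan is to reduce the statement to a bound on the generic cardinality of the fibres of $\pi$ with respect to the Haar measure $\nu$, and then to convert that cardinality bound into a bound on the number of ergodic measures. First I would record that $\pi$ has a well-defined \emph{generic fibre size}: since $(H,G)$ is minimal and equicontinuous it is (isomorphic to a minimal rotation on a compact group, hence) uniquely ergodic with unique invariant measure $\nu$, so $(H,\nu,G)$ is ergodic. The function $y\mapsto|\pi^{-1}(y)|$ is $G$-invariant, because $\pi$ is equivariant, and it is ($\nu$-)measurable; hence it is $\nu$-almost everywhere equal to a constant $K_1\in\mathbb{N}\cup\{\infty\}$, i.e.\ $\nu(\{y\in H:|\pi^{-1}(y)|=K_1\})=1$. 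Together with hypothesis (1) that $\pi$ is almost $K_2$ to one, this places us exactly in the setting of Theorem~\ref{thm-h}.

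Next I would apply Theorem~\ref{thm-h} contrapositively. Suppose, for contradiction, that $\lceil K_1/K_2\rceil\ge K$. Since $K\ge2$ this forces $\lceil K_1/K_2\rceil\ge2$, so the hypotheses of Theorem~\ref{thm-h} are satisfied and it produces a non-trivial $K$-IT-tuple, as $K$ lies in the admissible range $2\le K\le\lceil K_1/K_2\rceil$; this contradicts hypothesis (2). Hence $\lceil K_1/K_2\rceil\le K-1$, and therefore $K_1\le K_2\lceil K_1/K_2\rceil\le K_2(K-1)$. In particular $K_1$ is finite, which is what makes the remaining fibrewise counting argument meaningful.

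Finally I would bound the number of ergodic measures by $K_1$. Let $\mu_1,\dots,\mu_m\in\mathcal{M}^e(X,G)$ be distinct. Each is $G$-invariant, so its image $\pi_*\mu_i$ is invariant on $H$; by unique ergodicity of $(H,G)$ we get $\pi_*\mu_i=\nu$ for every $i$, and each $\mu_i$ admits a disintegration $\mu_i=\int_H(\mu_i)_y\,d\nu(y)$ in which $(\mu_i)_y$ is a probability measure carried by $\pi^{-1}(y)$ for $\nu$-a.e.\ $y$. Distinct ergodic measures are mutually singular, and I would upgrade this to a genuine $G$-invariant separation: writing $\lambda=\sum_j\mu_j$, the Radon--Nikodym derivative $\phi_i=d\mu_i/d\lambda$ is, using that $G$ is countable, $G$-invariant on a single set of full $\lambda$-measure, hence $\mu_i$-a.e.\ constant by ergodicity; comparing these constants yields, for each pair $i\ne j$, a $G$-invariant Borel set separating $\mu_i$ from $\mu_j$, and the finite $G$-invariant algebra they generate has invariant atoms on which each $\mu_i$ is concentrated. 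This produces pairwise disjoint $G$-invariant Borel sets $A_1,\dots,A_m\subseteq X$ with $\mu_i(A_i)=1$. Then $(\mu_i)_y(A_i)=1$ for $\nu$-a.e.\ $y$, so for $\nu$-a.e.\ $y$ the sets $A_i\cap\pi^{-1}(y)$ are non-empty and pairwise disjoint subsets of the $K_1$-point set $\pi^{-1}(y)$, whence $m\le K_1\le K_2(K-1)$. Since $m$ was arbitrary, $(X,G)$ has at most $K_2(K-1)$ ergodic measures.

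The step I expect to be the main obstacle is the passage from global mutual singularity of the $\mu_i$ to fibrewise disjointness of their conditional measures for a general (possibly non-amenable) countable group: the pointwise ergodic theorem is unavailable, so the separating $G$-invariant sets cannot be built from generic points and must instead be extracted from the invariance of the Radon--Nikodym derivatives. A secondary technical point is justifying the measurability of $y\mapsto|\pi^{-1}(y)|$ needed to define $K_1$, for which universal measurability of the fibre-cardinality function together with ergodicity of $\nu$ suffices.
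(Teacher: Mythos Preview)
Your argument is correct and uses the same core ingredient as the paper (Theorem~\ref{thm-h}), but the two proofs are organised in opposite directions and differ in one substantive step. The paper argues by contradiction: assuming $K_2(K-1)+1$ distinct ergodic measures $\mu_i$, it chooses \emph{not necessarily invariant} disjoint sets $V_i$ with $\mu_i(V_i)\ge 1-\tfrac{1}{2(K_2(K-1)+1)}$, observes that $\nu\bigl(\bigcap_i\pi(V_i)\bigr)\ge\tfrac12$ (since $\pi_*\mu_i=\nu$), concludes that $\nu$-a.e.\ fibre has at least $K_2(K-1)+1$ points, and then invokes Theorem~\ref{thm-h}. You instead first bound the generic fibre size $K_1\le K_2(K-1)$ via Theorem~\ref{thm-h}, and then bound the number of ergodic measures by $K_1$ using $G$-invariant separating sets built from Radon--Nikodym derivatives.

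The practical upshot is that the step you singled out as the main obstacle---producing genuinely $G$-invariant separating sets without an ergodic theorem---is simply absent from the paper's proof: because the paper only needs $\nu\bigl(\bigcap_i\pi(V_i)\bigr)>0$, approximate disjointness with large measure suffices and no invariance is required. Your route yields the slightly sharper intermediate statement that the number of ergodic measures is bounded by the generic fibre cardinality $K_1$, at the cost of that extra argument; the paper's route is shorter but does not isolate this intermediate bound.
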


	The structure of the paper is as follows. In Section 2, we recall some basic notions and results. In Section 3, we prove  Theorem  \ref{thm-5}. In Section 4, we prove Theorem \ref{Thm-B}. In section 5, we prove Theorem \ref{thm-A}, Theorem \ref{thm-h} and Theorem \ref{thm-B}.
	\section{Preliminary}
	In this section, we review some basic notions and fundamental properties of $G$-systems.
	\subsection{Regionally	proximal relation}
	A $G$-system $(X,G)$ is called minimal if $X$ contains no proper non-empty closed invariant
	subsets. It is easy to verify that a $G$-system is minimal if and only if every orbit is dense. A factor map $\pi: X\to Y$ between two $G$-systems $(X,G)$ and $(Y,G)$ is a continuous onto map
	which intertwines the actions. In this case,  we say that $(Y,G)$ is a factor of $(X,G)$ or $(X,G)$ is an
	extension of $(Y,G)$.
	
	A $G$-system $(X,G)$ is equicontinuous if for any $\epsilon > 0$, there is  $\delta> 0$ such that whenever
	$x, y\in X$ with $d(x, y) < \delta$, then $d(gx,gy) < \epsilon$ for all $g\in G$. Let $(X,G)$ be a $G$-system. There is
	a smallest invariant equivalence relation $S_{eq}$ such that the quotient system $(X/S_{eq},G)$ is
	equicontinuous \cite{EG}. The equivalence relation $S_{eq}$ is called the equicontinuous structure
	relation and the factor $(X/S_{eq},G)$ is called the maximal equicontinuous factor of
	$(X,G)$. We remark that $S_{eq}$ is the smallest closed $G$-invariant equivalent relation containing the $2$-regionally proximal relation $RP_2(X,G)$ where 	\begin{align*}
		RP_2(X, G)=\{&(x_1,x_2)
		\in X^2 :\text{ for any }\epsilon > 0\text{ there exist }x_1',x_2'
		\in X\text{ and }g \in G\\
		&\text{ with } d(x_i, x_i')\le \epsilon  (i=1,2),
		\text{ and }d(gx_1', gx_2')\le \epsilon \}.
	\end{align*}
	The $2$-regional proximal relation $RP_2(X,G)$ is $G$-invariat, closed, symmetric and reflective. It turns that in many cases  $RP_2(X,G)$ is an equivalence relation. For example, $G$ is an abelian group and $(X,G)$ is minimal. Hence,  $(X/RP_2(X,G),G)$ is the maximal equicontinuous factor of $(X,G)$  when $G$ is abelian and $(X,G)$ is minimal.
	
	For $k\ge 2$, the $k$-regionally
	proximal relation is  defined by
	\begin{align*}
		RP_k(X, G)=\{&(x_i)_{i=1}^k
		\in X^k :\text{ for any }\epsilon > 0\text{ there exist }x_i'
		\in X\text{ and }g \in G\\
		&\text{ with } d(x_i, x_i')\le \epsilon  (1 \le  i \le k),
		\text{ and }d(gx'_i, gx_j')\le \epsilon (1\le i\le j\le k)\}.
	\end{align*}

	The notion of $k$-regionally proximal relation was firstly introduced  in \cite{HLY}. The following theorem  was proved in
	\cite[Theorem 8]{A}.
	\begin{thm}\label{rp}Let $(X,G)$ be a minimal $G$-system and $n\ge 2$. Assume that $X\times X$ has a dense set of minimal points. If $x_1, x_2,\cdots, x_n \in X$ satisfy $(x_1, x_i)\in RP_2(X,G)$
		for $1\le i\le n$, then $(x_1, x_2,\cdots , x_n)\in RP_n(X,G)$.
	\end{thm}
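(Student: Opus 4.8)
The plan is to pass to the Ellis (enveloping) semigroup $E(X,G)=\overline{\{g:g\in G\}}\subseteq X^X$ and to use that the hypothesis---that the minimal points are dense in $X\times X$ (the Bronstein condition)---is known to propagate to every finite power $X^m$, so that the minimal points of each $X^m$ are dense as well. Since $RP_2(X,G)$ is closed and $G$-invariant, it is invariant under all of $E(X,G)$: if $(a,b)\in RP_2(X,G)$ and $p\in E(X,G)$, then $(pa,pb)\in RP_2(X,G)$. As $(X,G)$ is minimal, I fix a minimal left ideal $M\subseteq E(X,G)$ and an idempotent $u\in M$ with $ux_1=x_1$. Writing $\mathbf{x}=(x_1,\dots,x_n)$ and letting $u$ act diagonally on $X^n$, invariance gives $(x_1,ux_i)\in RP_2(X,G)$ for every $i$, while $u\cdot z=u\cdot uz$ shows that each $(z,uz)$ is proximal; hence $u\mathbf{x}=(x_1,ux_2,\dots,ux_n)$ is an almost periodic point of $(X^n,G)$ that is proximal to $\mathbf{x}$.

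Unwinding the definition, to prove $\mathbf{x}\in RP_n(X,G)$ I must exhibit, for each $\epsilon>0$, perturbations $x_i'$ of the $x_i$ and a \emph{single} $g\in G$ with $d(gx_i',gx_j')\le\epsilon$ for all $i,j$. Each hypothesis $(x_1,x_i)\in RP_2(X,G)$ separately supplies its own perturbations and its own group element pushing that pair to the diagonal, and the whole difficulty is to make these $n-1$ witnesses use one common $g$. I would organise this as an induction on $n$: assuming $(x_1,\dots,x_{n-1})\in RP_{n-1}(X,G)$, one has a $g$ driving the first $n-1$ coordinates into a small ball, and the task becomes to adjust the construction so that a perturbation of $x_n$ is carried into the same ball. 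Here the almost periodic (hence recurrent) point $u\mathbf{x}$ and the fact that $(X^n,G)$ again has dense minimal points are the crucial inputs: they let me realise the regional proximalities by group elements chosen near a common limit element of $E(X,G)$ attached to $u$, and align all coordinates simultaneously by perturbing them independently. The mechanism is visible already in a distal skew product over an irrational rotation, where a minute, coordinate-dependent perturbation of the base, amplified by a large iterate, displaces the sensitive fibre coordinate by a prescribed, controllable amount, so that all fibres can be brought together at once.

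The main obstacle is exactly this synchronisation. It cannot be obtained by the naive device of encoding the $n-1$ pair relations as a single membership in $RP_2$ of a product system, because the product definition would require one and the same group element for every coordinate---precisely what the separate hypotheses do not give. Overcoming this is where the Bronstein condition is indispensable: it guarantees enough minimal points in the powers $X^m$ to run an almost-periodicity argument inside $E(X^m,G)$, and it is what ultimately forces the existence of a common group element realising all pairs at once. The technical heart, and the step I expect to be hardest to make rigorous, is controlling the discontinuity of the $E(X,G)$-action while passing to these simultaneous limits, so that the perturbations in the different coordinates can be kept small at the same time as the images are driven onto the diagonal.
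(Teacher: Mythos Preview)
The paper does not prove this theorem at all: it is quoted verbatim from Auslander~\cite{A} (Theorem~8 there), so there is no in-paper argument to compare against. Your outline is in the right spirit---Ellis semigroup, a minimal idempotent $u$ fixing $x_1$, replacing $\mathbf{x}$ by the almost periodic point $u\mathbf{x}$, and the Bronstein hypothesis as the engine---and these are indeed the ingredients Auslander uses.

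That said, what you have written is a plan rather than a proof, and the gap is precisely the one you flag yourself. Everything up to ``the task becomes to adjust the construction so that a perturbation of $x_n$ is carried into the same ball'' is setup; the actual content of the theorem is the synchronisation step, and you do not carry it out. The skew-product paragraph is heuristic only. To close the argument you need a concrete mechanism: in Auslander's treatment the key is that, once $u\mathbf{x}$ is minimal, one works inside the orbit closure of $u\mathbf{x}$ in $X^n$ and uses that the pair $(ux_1,ux_n)=(x_1,ux_n)$ is regionally proximal \emph{inside a minimal system}, where the regional proximal relation is much better behaved (in particular, one can realise it with the perturbations lying along a single orbit). This is what produces a common $g$ for all coordinates simultaneously and is the step your write-up is missing.

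One smaller point: the assertion that denseness of minimal points in $X\times X$ ``is known to propagate to every finite power $X^m$'' is correct for minimal $(X,G)$, but it is not a triviality and should be cited or proved; it is itself one of the lemmas in Auslander's argument, not something you can take for granted.
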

	Especially, if $G$ is abelian and $(X,G)$ is a minimal $G$-system, then $(x,gx)$ is minimal for all $x\in X$ and $g\in G$. That is,  $X\times X$ has a dense set of minimal points and then Theorem \ref{rp} is true  in this case (In the case $G$ is abelian, one can see \cite{HLY} or \cite{Q} for other proofs).
	
	\subsection{Independence set}
	One may use independence sets to give an equivalent definition of tameness.
	\begin{defn}Let $(X, G)$ be a $G$-system and $k \in\mathbb{N} $. For a tuple $A=(A_1,\cdots, A_k)$ of subsets of $X$,
		we say that a set $J\subset G$ is an independence set for $A$ if for every nonempty finite subset
		$I\subset J$ and function $\sigma : I\to  \{1, 2,\cdots, k\}$ we have
		$$\cap_{s\in I}s^{-1}A_{\sigma(s)}\neq\emptyset.$$
	\end{defn} 
	\begin{defn}Let $(X, G)$ be a $G$-system and $n\ge  2$. We call a tuple $x=(x_1,\cdots, x_n)\in X^n$ is an IT-tuple of length $n$ (or an IT-pair if $n = 2$) if for any product neighborhood $U_1\times U_2\times\cdots \times U_n$
		of $x$ in $X^n$ the tuple $(U_1,U_2,\cdots ,U_n)$ has an infinite independence set. We denote the
		set of IT-tuples of length $n$ by $\text{IT}_n(X,G)$.
	\end{defn}
	
	The diagonal of $X^n$ is defined by
	$$\triangle_{n}(X)= \{(x,\cdots, x)\in X^n : x\in X\},$$
	and put
	$$\triangle^{(n)}(X)= \{(x_1,x_2,\cdots, x_n)\in X^n :\text{ for some }i \neq j, x_i = x_j\}.$$
	When $n = 2$ one writes $\triangle(X) = \triangle_2(X) = \triangle^{(2)}(X)$.
	
	\begin{prop}\label{p-ind}Let (X, G) be a $G$-system  and $n\ge  2$.
		\begin{itemize}
			\item[(1)] Let $(A_1,\cdots, A_n)$ be a tuple of non-empty closed subsets of $X$ which has an infinite
			independence set. Then there exists a $n$-IT-tuple $(x_1,\cdots, x_n)$ with $x_j\in A_j$ for all
			$1\le  j \le  n$.
			\item[(2)] $\text{IT}_2(X,G)\setminus \triangle_2(X)$ is nonempty if and only if $(X, G)$ is non-tame.
			\item[(3)] $\text{IT}_n(X, G)$ is a closed $G$-invariant subset of $X^n$. 
			\item[(4)] Let $\pi : (X, G)\to  (Y, G)$ be a factor map. Then $\pi^{(n)}
			(\text{IT}_n(X, G)) = \text{IT}_n(Y, G)$, where
			$\pi^{(n)}: X^n \to  Y^ n$ defined by $\pi^{(n)}(x_1,\cdots, x_n)=(\pi(x_1),\cdots, \pi(x_n)).$
			\item[(5)] Suppose that $Z$ is a closed $G$-invariant subset of $X$. Then $\text{IT}_n(Z, G)\subset \text{IT}_n(X, G)$.
		\end{itemize}
	\end{prop}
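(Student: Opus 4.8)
The plan is to establish the five items separately, treating (2) as the cited Kerr--Li characterization of tameness and reserving the real work for (1) and the surjectivity half of (4); the remaining items are formal consequences of the definition, so I would dispatch them first as warm-ups. The two elementary facts that drive (3), the easy half of (4), and (5) are that the independence-set condition transforms predictably under the group action and under the structure maps. Concretely, because each $g\in G$ acts as a homeomorphism and $s^{-1}(gU)=(g^{-1}s)^{-1}U$, a set $J\subseteq G$ is an independence set for $(gU_1,\dots,gU_n)$ if and only if $g^{-1}J$ is one for $(U_1,\dots,U_n)$; since $J\mapsto g^{-1}J$ preserves infiniteness, $\mathrm{IT}_n(X,G)$ is $G$-invariant. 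Closedness is immediate: any product neighbourhood of a limit of IT-tuples is a product neighbourhood of some IT-tuple in the sequence, hence carries an infinite independence set. For (5), a basic open set of $Z$ is the trace $U\cap Z$ of an open set of $X$, and $\bigcap_s s^{-1}(U_{\sigma(s)}\cap Z)\subseteq\bigcap_s s^{-1}U_{\sigma(s)}$, so an independence set computed in $Z$ is one in $X$. For the inclusion $\pi^{(n)}(\mathrm{IT}_n(X,G))\subseteq\mathrm{IT}_n(Y,G)$ in (4), I would use equivariance and surjectivity of $\pi$: if $V_i\ni\pi(x_i)$ are open then $U_i=\pi^{-1}(V_i)\ni x_i$ are open, $\bigcap_s s^{-1}U_{\sigma(s)}=\pi^{-1}\bigl(\bigcap_s s^{-1}V_{\sigma(s)}\bigr)$, and a nonempty preimage forces the base set to be nonempty, so an infinite independence set for $(U_i)$ descends to one for $(V_i)$.

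The core of the proposition is (1), which I would prove by an iterated-subdivision and compactness scheme. Starting from the given infinite independence set for $(A_1,\dots,A_n)$, I build a decreasing sequence of tuples $(A_1^{(k)},\dots,A_n^{(k)})$ of nonempty closed sets with $A_j^{(k)}\subseteq A_j^{(k-1)}$ and $\operatorname{diam} A_j^{(k)}\to 0$, each still possessing an infinite independence set; then $\bigcap_k A_j^{(k)}=\{x_j\}$, and monotonicity of independence sets under enlarging the sets shows $(x_1,\dots,x_n)\in\mathrm{IT}_n(X,G)$ with $x_j\in A_j$. The single inductive step is the genuine obstacle: given a tuple with an infinite independence set, after covering each coordinate set by finitely many closed pieces of small diameter one must select one piece per coordinate so that the resulting subtuple still has an infinite independence set. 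I expect this to require the independence-density machinery of Kerr--Li rather than a naive pigeonhole, since ``arbitrarily large finite independence sets'' alone does not produce an infinite one (hereditary families of Schreier type have arbitrarily large members but no infinite one); the correct invariant is the positive independence density of the tuple, which is preserved under a finite refinement by a Sauer--Shelah/Karpovsky--Milman-type counting lemma together with a Ramsey argument, and positivity of the density then yields an infinite independence set. This is where essentially all the difficulty of the proposition lies.

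Finally, for the surjectivity half of (4) I would lift a given $(y_1,\dots,y_n)\in\mathrm{IT}_n(Y,G)$ to $X$ by showing that the tuple of fibres $(\pi^{-1}(y_1),\dots,\pi^{-1}(y_n))$ has an infinite independence set and then invoking (1) to produce $(x_1,\dots,x_n)\in\mathrm{IT}_n(X,G)$ with $x_j\in\pi^{-1}(y_j)$, so that $\pi^{(n)}(x_1,\dots,x_n)=(y_1,\dots,y_n)$. To obtain the fibre independence set, note that for every open $V_i\ni y_i$ the equivariance identity above gives $\bigcap_s s^{-1}\pi^{-1}(V_{\sigma(s)})=\pi^{-1}\bigl(\bigcap_s s^{-1}V_{\sigma(s)}\bigr)$, so $(\pi^{-1}(V_1),\dots,\pi^{-1}(V_n))$ has the same positive-density independence behaviour as $(V_1,\dots,V_n)$; letting the $V_i$ shrink to $y_i$ and passing to $\pi^{-1}(y_i)=\bigcap_{V_i\ni y_i}\pi^{-1}(V_i)$, the positive independence density survives because each defining finite-pattern condition is closed, which hands back an infinite independence set for the fibres. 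Item (2) is then precisely the Kerr--Li theorem, already quoted in the introduction, that $(X,G)$ is tame if and only if it has no nondiagonal IT-pair, i.e.\ $\mathrm{IT}_2(X,G)\setminus\triangle_2(X)\neq\emptyset$ exactly when $(X,G)$ is non-tame.
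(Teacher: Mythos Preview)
The paper does not prove this proposition; it is quoted in the preliminaries as a known result of Kerr and Li. Your treatment of (2), (3), (5), and the forward inclusion in (4) is correct. The gap lies in (1) and, through it, in the surjectivity half of (4). For the refinement step in (1) you propose ``positive independence density'' as the invariant preserved under subdivision and then claim that positivity of the density hands back an infinite independence set. That last implication is false: positive independence density is the invariant governing IE-tuples, not IT-tuples, and a pair can have positive independence density---indeed arbitrarily large finite independence sets---yet no infinite one, which is precisely the obstruction you yourself flagged one sentence earlier. The Sauer--Shelah/Karpovsky--Milman count preserves density under refinement but cannot upgrade density to an infinite set. What Kerr and Li actually use for IT-tuples is a separate Ramsey-type lemma which shows directly that if a tuple of closed sets has an infinite independence set and one coordinate is covered by finitely many closed pieces, then some choice of piece still admits an infinite independence set.

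The error propagates to (4). You want to show that the fibre tuple $(\pi^{-1}(y_1),\ldots,\pi^{-1}(y_n))$ has an infinite independence set and then invoke (1), but that premise is false in general: since $s^{-1}\pi^{-1}(y_i)=\pi^{-1}(s^{-1}y_i)$, the intersection $\bigcap_{s\in F}s^{-1}\pi^{-1}(y_{\sigma(s)})$ is nonempty only when all the points $s^{-1}y_{\sigma(s)}$ coincide, which for $|F|\ge2$ and varying $\sigma$ forces orbit relations that almost never hold (already $\pi=\mathrm{id}$ together with any nondiagonal IT-pair gives a counterexample). What is actually true is that every \emph{open} enlargement of the fibre tuple has an infinite independence set; the correct route is to apply (1) to a shrinking sequence of closed neighbourhoods of the fibres and then use the closedness of $\mathrm{IT}_n(X,G)$ from (3) to pass to a limit lying in $\prod_j\pi^{-1}(y_j)$.
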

	
	\subsection{Ellis semigroup} An Ellis semigroup is a compact space with a semigroup multiplication which is continuous in only one variable. For an Ellis semigroup $E$, the element $u$ with $u^2=u$ is called an
	idempotent. The Ellis-Numakura theorem says that for any Ellis semigroup $E$, the
	set $J(E)$ of idempotents of $E$ is not empty \cite{El}. A non-empty subset $I$ of $E$ is a left ideal  (resp. right ideal) if $EI\subset I$ (resp. $IE\subset I$). A minimal left ideal is the
	left ideal that does not contain any proper left ideal of $E$. Obviously every left ideal is a
	semigroup and every left ideal contains some minimal left ideal.
	
	An idempotent $u\in J(E)$ is minimal if $v \in J(E)$ and $vu= v$ implies $uv= u$. The
	following results are well known \cite{E0,Fu}. Let $L$ be a left ideal of a Ellis semigroup $E$
	and $u\in J(E)$. Then there is some idempotent $v$ in $Lu$ such that $uv = v$ and $vu = v$; an
	idempotent is minimal if and only if it is contained in some minimal left ideal.

	Given a $G$-system $(X,G)$, the Ellis semigroup $E(X)$ associated to
	$(X, G)$ is defined as the closure of $\{x\to  tx: t \in G \}\subset X^X$ in the product topology, where
	the semi-group operation is given by the composition. On $E(X)$, we may consider the
	$G$-system given by $E(X)\ni s\to  ts$ for each element $t\in G$.
	\begin{thm}[\cite{A1}, pp. 52-53]\label{thm-A1}Suppose $H$ is a compact metric space and $(H, G)$ is minimal
		and equicontinuous. Then $E(H)$ is a compact metrisable topological group. Further, we have
		the following.
		\begin{itemize}
			\item[(a)] If $G$ is abelian, then $E(H)$ is abelian and $(H, G)$ is isomorphic to $(E(H), G)$.
			\item[(b)]For general $G$, $(H, G)$ is a factor of $(E(H), G)$, where the factor map $\pi$ is given by
			$$\pi: E(H)\to  H, t\to  th$$
			for some fixed $h\in H$. Moreover, $\pi$ is open.
		\end{itemize}
	\end{thm}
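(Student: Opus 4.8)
The plan is to exploit equicontinuity to replace the bare semigroup structure of $E(H)$ by the far more rigid structure of an isometry group. First I would produce a compatible $G$-invariant metric on $H$ by setting
$$\rho(x,y)=\sup_{g\in G} d(gx,gy).$$
Equicontinuity guarantees that $\rho$ is finite, is a metric, and induces the original topology; because $G$ is a group, each $g\in G$ is a $\rho$-isometry, so $\{g:g\in G\}$ is a uniformly equicontinuous family. By Arzel\`a--Ascoli its closure in $C(H,H)$ with the uniform topology is compact, and on an equicontinuous family pointwise convergence coincides with uniform convergence; hence $E(H)$ consists of $\rho$-isometries, the product topology on $E(H)$ agrees with uniform convergence, and $E(H)$ is compact and metrizable. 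The same uniform equicontinuity makes composition jointly continuous: if $s_n\to s$ and $t_n\to t$ uniformly, then $d(s_n t_n x, s t x)\le d(s_n t_n x, s_n t x)+d(s_n t x, s t x)$, where the first term is controlled by the common modulus of equicontinuity applied to $d(t_n x,t x)$ and the second tends to $0$ uniformly. Thus $E(H)$ is a compact metrizable topological semigroup with identity $\mathrm{id}_H=e_G$.

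The core step is to upgrade this semigroup to a group. Every $p\in E(H)$ is a $\rho$-isometry of the compact space $H$, and a distance-preserving self-map of a compact metric space is automatically onto: if some $y$ were omitted, the forward orbit $\{p^n y\}$ would be $\delta$-separated for $\delta=\rho(y,p(H))>0$, contradicting compactness. Hence each $p$ is a bijective isometry. To locate its inverse inside $E(H)$, I would examine the closure of $\{p^n:n\ge1\}$: it is a compact commutative subsemigroup, so by the Ellis--Numakura theorem it contains an idempotent $u$; but a bijective isometry with $u^2=u$ must be $\mathrm{id}_H$, so $p^{n_k}\to\mathrm{id}_H$ along some subsequence. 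Passing to a convergent subsequence of $p^{n_k-1}$ and using joint continuity yields $q\in E(H)$ with $pq=qp=\mathrm{id}_H$, i.e. $q=p^{-1}\in E(H)$. Continuity of inversion then follows from the isometry identity $\rho(p_n^{-1}x,p^{-1}x)=\rho(x,p_n p^{-1}x)\to0$, so $E(H)$ is a compact metrizable topological group.

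For (a), commutativity of $G$ passes to the closure because the set $\{(p,q):pq=qp\}$ is closed under joint continuity and contains the dense set $G\times G$; thus $E(H)$ is abelian. The evaluation map $\pi(p)=p h_0$ is a continuous $G$-equivariant surjection, surjectivity coming from minimality since $E(H)h_0=\overline{Gh_0}=H$. When $G$ is abelian it is also injective: for $g\in G$ one computes, using commutativity, $p(gh_0)=\lim_n g_n(gh_0)=\lim_n g(g_n h_0)=g\,p(h_0)$, so $p(h_0)=q(h_0)$ forces $p=q$ on the dense orbit $Gh_0$ and hence everywhere. A continuous $G$-equivariant bijection between compact Hausdorff systems is an isomorphism, giving $(H,G)\cong(E(H),G)$.

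For (b) in general I would view $\pi$ as the orbit map of the continuous action $p\cdot x=p(x)$ of the compact group $E(H)$ on $H$, which is transitive by minimality. The stabilizer $E_{h_0}=\{p:ph_0=h_0\}$ is a closed subgroup, and $\pi$ factors as $E(H)\to E(H)/E_{h_0}\to H$. The quotient map of a topological group is open, while the induced map $E(H)/E_{h_0}\to H$ is a continuous bijection from a compact space onto a Hausdorff space, hence a homeomorphism; composing, $\pi$ is open. The main obstacle throughout is the group step: recognizing that equicontinuity forces every element of $E(H)$ to be a bijective isometry is exactly what makes idempotents trivial and inverses available, and once that is in place both the abelian rigidity in (a) and the homogeneous-space description in (b) follow formally.
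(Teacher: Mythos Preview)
The paper does not give its own proof of this theorem; it is quoted from Auslander's book \cite{A1}, pp.~52--53, and used as a black box. Your argument is correct and is essentially the classical proof one finds there: pass to the invariant metric $\rho(x,y)=\sup_{g\in G}d(gx,gy)$, identify $E(H)$ with a compact group of $\rho$-isometries via Arzel\`a--Ascoli, and then read off (a) and (b) from the homogeneous-space picture $H\cong E(H)/E_{h_0}$. There is nothing to compare against in the paper itself, and no gap in what you wrote.
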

	The following is in \cite[Proposition 3.2]{HLSY2} which was proved to be true for
	$k = 2$ in \cite[Proposition 3.3]{FG}.
	\begin{prop}\label{p-eliss} Let $H$ be a locally compact second countable Hausdorff topological group
		with left Haar measure $\nu$, and let $k\in\mathbb{N}$ with $k\ge  2$. Suppose that $V_1,\cdots, V_k\subset H$ are
		compact subsets that satisfy
		\begin{itemize}
			\item[(1)]$\text{cl}(\text{int}(V_i))=V_i$ for $i=1,2,\cdots,k$;
			\item[(2)]$\text{int}(V_i)\cap\text{int}(V_j )=\emptyset$ for all $1\le i< j \le k$;
			\item[(3)]$\nu(\cap_{i=1}^k V_i)>0$.
		\end{itemize}
		Further, assume that $G\subset H$ is a dense subgroup and $\mathcal{G}\subset H$ is a residual set. Then there exists an infinite set $I\subset G$ such that for all $a\in\{1, 2, ..., k\}^I$ there exists $h\in \mathcal{G}$ with the property that
		$$h\in \bigcap_{t\in I}t^{-1}\text{int}(V_{a_t}).$$
	\end{prop}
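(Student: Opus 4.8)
The plan is to strip the statement down to two soft reductions, locate the set $I$ by a measure argument, and then confront the passage from the closed sets $V_i$ to their interiors, which is the genuine difficulty.

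First I would record the elementary consequence of hypotheses (1)--(3). Put $A=\bigcap_{i=1}^{k}V_i$; by (3) we have $\nu(A)>0$, so $A\neq\emptyset$, while (2) forces $\mathrm{int}(A)=\emptyset$ and (1) gives $V_i=\mathrm{cl}(\mathrm{int}(V_i))$. Hence every $x\in A$ lies in $\mathrm{cl}(\mathrm{int}(V_i))$ \emph{for all $i$ at once}, so every neighbourhood of $x$ meets every $\mathrm{int}(V_i)$. Consequently, if $h\in H$ and $t\in G$ satisfy $th\in A$, then for any neighbourhood $O$ of $h$ the set $tO$ is a neighbourhood of $th$, whence $O\cap t^{-1}\mathrm{int}(V_i)\neq\emptyset$ for every $i$. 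This is the mechanism that converts the single condition ``$th\in A$'' into the freedom to enter any prescribed interior $t^{-1}\mathrm{int}(V_i)$ after an arbitrarily small perturbation of $h$. Second, since $\mathcal{G}$ is residual it is dense in the Baire space $H$; thus, for a fixed $I$ and pattern $a$, it suffices to exhibit a nonempty open subset of $\bigcap_{t\in I}t^{-1}\mathrm{int}(V_{a_t})$ (more generally, to show this $G_\delta$ set is non-meager), because a nonempty open set meets the dense set $\mathcal{G}$ and a non-meager $G_\delta$ meets the comeager set $\mathcal{G}$.

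To locate $I$ I would argue by measure. Fix a compact neighbourhood $C\subseteq H$ of finite measure and choose infinitely many distinct $t_n\in G\cap C$ (possible since $G$ is dense and $H$ is infinite); then $E_n:=t_n^{-1}A$ lies in the fixed finite-measure set $C^{-1}A$, and $\nu(E_n)=\nu(A)$ by left invariance of $\nu$. The reverse Fatou inequality yields $\nu(\limsup_n E_n)\ge\limsup_n\nu(E_n)=\nu(A)>0$, so there is a point $h_*$ with $t_n h_*\in A$ for infinitely many $n$; let $I_0$ be that infinite set. By the first paragraph the single point $h_*$ already realises \emph{every} pattern over $I_0$ into the \emph{closed} sets $V_i$, and by the finite intersection property of the compact sets $t^{-1}A$ one can pass to an infinite $I\subseteq I_0$ with $\bigcap_{t\in I}t^{-1}A\neq\emptyset$. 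This pins down the candidate $I$ and shows that the closed version of the statement is essentially free.

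The hard part — and where I expect to spend the most effort — is upgrading this closed realisation to the open interiors while simultaneously landing inside $\mathcal{G}$. The point $h_*$ only gives $t h_*\in A\subseteq \mathrm{cl}(\mathrm{int}(V_{a_t}))$, i.e. membership in the \emph{closures} of the factors, and this does not by itself make $\bigcap_{t\in I}t^{-1}\mathrm{int}(V_{a_t})$ non-meager. The natural remedy is to build $I=\{t_1,t_2,\dots\}$ together with a Cantor scheme of nonempty open sets $P(b)$ indexed by finite patterns $b$, with $\mathrm{cl}(P(b,i))\subseteq P(b)$, diameters shrinking along branches, and each $P(b)$ inside the $|b|$-th term of a decreasing sequence of dense open sets whose intersection lies in $\mathcal{G}$; the branch limits then supply, for every infinite pattern $a$, a point $h\in\mathcal{G}\cap\bigcap_{t\in I}t^{-1}\mathrm{int}(V_{a_t})$. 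The scheme is driven by one splitting step: given the current finitely many leaves $P(b)$, produce a \emph{single} $t_{n+1}\in G$ such that every leaf meets every $t_{n+1}^{-1}\mathrm{int}(V_i)$, which by the first paragraph is guaranteed once $t_{n+1}P(b)\cap A\neq\emptyset$ for all leaves. Here the measure hypothesis re-enters: $\bigcap_b A\,P(b)^{-1}$ is open, and a continuity-of-translation-in-measure estimate shows it has positive measure provided the leaves are clustered finely enough, after which the density of $G$ supplies $t_{n+1}$.

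The main obstacle is exactly this simultaneity. At level $n$ there are up to $k^{n}$ leaves to split with one group element, and the positive budget $\nu(A)$ must be balanced against this exponential growth; keeping every level clustered tightly enough is in tension with the fact that splitting into the $k$ disjoint interiors spreads the children across the parent cell and so resists re-concentration. Overcoming this — either by a careful choice of scales that forces the clusters to shrink fast enough relative to $k^{n}$, or by replacing the naive level-by-level splitting with a combinatorial-independence (Sauer--Shelah type) extraction of an infinite independence set from arbitrarily large finite ones produced by the measure argument above — is the technical heart of the proof, and is precisely where hypotheses (1)--(3) must be used in full.
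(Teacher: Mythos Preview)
The paper does not prove this proposition; it is quoted from \cite[Proposition~3.2]{HLSY2} (the case $k=2$ being \cite[Proposition~3.3]{FG}), so there is no in-paper argument to compare against directly. That said, your overall architecture --- a Cantor scheme whose branching is driven at each level by a single $t_{n+1}$ chosen so that every current leaf meets $t_{n+1}^{-1}A$ --- is the approach of those references, and your preliminary reductions (every neighbourhood of a point of $A$ meets every $\mathrm{int}(V_i)$; it suffices that each pattern-intersection be non-meager) are correct and are exactly what is used.

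The gap is where you place it, and it is genuine: you do not carry out the simultaneous splitting, and neither suggested escape is made to work. The Sauer--Shelah suggestion is a red herring. Producing arbitrarily large \emph{finite} independence sets $I_n$ does not by itself yield an infinite one: there is no compactness for independence sets, the choice tree for $t_{n+1}$ is infinitely branching so K\H{o}nig's lemma does not apply, and VC-type combinatorics do not bypass the need to find one $t_{n+1}$ serving all current leaves. The reverse-Fatou detour producing $h_\ast$ is correct but is not used in the scheme and should be dropped.

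The ``careful scales'' route is the right one, and the tension you describe is milder than you fear once you notice that every leaf at every level sits inside $W_\emptyset$, so the only spreading is within that fixed small set. What remains is that the neighbourhood $U_n$ required for the measure estimate at level $n$ (any $U_n$ with $\sup_{h\in U_n}\nu(Ah^{-1}\triangle Ah_0^{-1})<\nu(A)/k^{n}$, which exists by continuity of translation) shrinks with $n$ while $W_\emptyset$ is fixed. The missing mechanism is: fix a Lebesgue density point $h_0$ of $A$, pre-select the decreasing sequence $(U_n)$ at the outset, and strengthen the inductive hypothesis so that each leaf $W_b$ at level $n$ is a neighbourhood of a point of $A$ lying in $U_{n+1}$. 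The splitting step then finds $t_{n+1}\in G$ in the open set $\bigcap_b A\,(W_b\cap U_{n+2})^{-1}$, which has positive measure by the pre-chosen scale; because a.e.\ point of $A$ is a density point, one can further arrange that each $W_b\cap t_{n+1}^{-1}A\cap U_{n+2}$ contains a density point of $A$, around which the children $W_{b,i}\subseteq W_b\cap t_{n+1}^{-1}\mathrm{int}(V_i)\cap U_{n+2}$ are taken. This closes the loop; writing it out carefully (with the residual $\mathcal{G}=\bigcap_m G_m$ threaded through by shrinking each leaf into $G_n$) is what you should do.
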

	Now we introduce another type of Ellis semigroups.
	\begin{defn}
		A filter on $G$ is a nonempty family $\mathfrak{p}$ of subsets of $G$ with the following properties
		\begin{enumerate}
			\item If $A,B\in\mathfrak{p}$ then $A\cap B\in\mathfrak{p}$;
			\smallskip
			\item If $A\in\mathfrak{p}$ and $A\subset B\subset G$, then $B\in\mathfrak{p}$;
			\smallskip
			\item $\emptyset\notin \mathfrak{p}$.
		\end{enumerate}
		If in addition, $\mathfrak{p}$ satisfies that for any $A\subset G$, either $A\in\mathfrak{p}$ or $G\setminus A\in\mathfrak{p}$, then  it is called an ultrafilter on $G$ \cite[Section 3]{Hi}.
	\end{defn}
	
	Let
	$$\beta G:=\{\mathfrak{p}:\mathfrak{p}\text{ is an ultrafilter on }G\}.$$
	The sets with the form
	$H:=\{\mathfrak{p}\in\beta G: H\in\mathfrak{p}\} $ with $H\subset G$ form a base for a topology on $\beta G$. With
	this topology $\beta G$ becomes a compact Hausdorff space. The semigroup operation on $ G$ is defined by 
	\[A\in\mathfrak{p}\cdot\mathfrak{q}\Leftrightarrow\{x\in G:Ax^{-1}\in\mathfrak{p}\}\in\mathfrak{q},\]
	where $Ax^{-1}:=\{y\in G:yx\in A\}$. Then $\beta G$ is an Ellis semigroup.
	
	Let $(U_g)_{g\in G}$ be a unitary representation of a group $G$ on a
	Hilbert space $\mathcal{H}$, and
	$$\mathcal{H}_c=\{\varphi\in\mathcal{H}:\{U_g \varphi:g\in G\}\text{ is precompact in }\mathcal{H}\}.$$
	Since the unit ball in $\mathcal{H}$ is a compact metrizable space with
	respect to the weak topology, the expression $\mathfrak{p}$-$\lim_{g\in G} U_g\varphi$ has a well defined meaning.
	The following consequence was proved by Bergelson \cite[Theorem 4.4 and Theorem 4.5]{Be}.
	\begin{thm}\label{thm:weak mixing}
		Let $(U_g)_g\in G$ be a unitary representation of a group $G$ on a
		Hilbert space $\mathcal{H}$. 	Let $\mathfrak{p}\in\beta G$ be a minimal idempotent. Then 
		$$\mathcal{H}=\mathcal{H}_c\oplus\mathcal{H}_{wm},$$
		where $\mathcal{H}_{wm}:=\{\varphi\in\mathcal{H}:\mathfrak{p}\text{-}\lim_{g\in G}U_g\varphi=0\}$.
	\end{thm}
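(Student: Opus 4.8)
The plan is to realize the weak-mixing projection as the image of the ultrafilter $\mathfrak{p}$ under the canonical homomorphism from $\beta G$ into the weakly compact operator semigroup generated by the representation, and then to invoke the Jacobs--de Leeuw--Glicksberg (JdLG) splitting. First I would make sense of the operator $P\varphi:=\mathfrak{p}\text{-}\lim_{g\in G}U_g\varphi$: for every $\varphi$ the orbit $\{U_g\varphi\}$ lies in the ball of radius $\|\varphi\|$, which is weakly compact, so the weak $\mathfrak{p}$-limit exists and defines a linear contraction $P$ on $\mathcal{H}$ with $\mathcal{H}_{wm}=\ker P$ by definition. The goal is to show that $P$ is the orthogonal projection onto $\mathcal{H}_c$, for then $\mathcal{H}_{wm}=\ker P=\mathcal{H}_c^{\perp}$ and $\mathcal{H}=\mathcal{H}_c\oplus\mathcal{H}_{wm}$.

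Next, set $\Sigma:=\overline{\{U_g:g\in G\}}^{\,\mathrm{WOT}}\subset B(\mathcal{H})$. Since all $U_g$ are contractions, $\Sigma$ sits in the weakly compact unit ball, and under composition it is a compact right-topological semigroup into which $g\mapsto U_g$ embeds $G$. By the universal property of $\beta G$ this extends to a continuous semigroup homomorphism $\Phi:\beta G\to\Sigma$, $\mathfrak{q}\mapsto\mathfrak{q}\text{-}\lim_g U_g$, whose value at $\mathfrak{p}$ is exactly $P$. Because $\Phi$ is a homomorphism and $\mathfrak{p}$ is idempotent, $P=\Phi(\mathfrak{p})$ is an idempotent operator; a contractive idempotent on a Hilbert space is automatically self-adjoint, so $P$ is the orthogonal projection onto $\mathrm{ran}\,P$ and $\mathcal{H}_{wm}=\ker P=(\mathrm{ran}\,P)^{\perp}$.

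The heart of the argument is to identify $\mathrm{ran}\,P=\mathcal{H}_c$. For the inclusion $\mathcal{H}_c\subseteq\mathrm{ran}\,P$ I would show $P$ fixes every compact vector: on $\mathcal{H}_c$ the orbits are precompact, so the SOT-closure of $\{U_g|_{\mathcal{H}_c}\}$ is a compact group, and $P$ restricts there to an idempotent of a group, hence to the identity, whence $P\varphi=\varphi$. For the reverse inclusion, which is where the minimality of $\mathfrak{p}$ is indispensable, I would use that $\mathfrak{p}$ lies in the smallest ideal $K(\beta G)$, so that the surjective homomorphism $\Phi$ carries it into the smallest ideal $K(\Sigma)$ and $P=\Phi(\mathfrak{p})$ is a minimal idempotent of $\Sigma$. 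By the JdLG theory applied to the weakly compact semigroup $\Sigma$, a minimal idempotent acts as the canonical projection onto the reversible (almost periodic) subspace along the flight vectors $\{\varphi:0\in\overline{\Sigma\varphi}^{\,w}\}$; for a unitary representation the reversible subspace is precisely $\mathcal{H}_c$ and the flight vectors form $\mathcal{H}_c^{\perp}$. Hence $\mathrm{ran}\,P=\mathcal{H}_c$ and $\ker P=\mathcal{H}_c^{\perp}$, giving the asserted decomposition.

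The main obstacle is this last identification: one must verify that $\Phi$ really is a homomorphism with the continuity needed for $\Phi(K(\beta G))\subseteq K(\Sigma)$, so that minimality of $\mathfrak{p}$ transfers to $P$, and then match the abstract JdLG projection with the concrete subspace $\mathcal{H}_c$ and its orthogonal complement. A more self-contained route to the reverse inclusion avoids JdLG: from $P\psi=\psi$ and $\|U_g\psi\|=\|\psi\|$ one gets $\mathfrak{p}\text{-}\lim_g\|U_g\psi-\psi\|=0$, so each return set $\{g:\|U_g\psi-\psi\|<\epsilon\}$ lies in $\mathfrak{p}$; the delicate point is then to promote membership in a minimal idempotent to enough recurrence to force the orbit $\{U_g\psi\}$ to be precompact, i.e. $\psi\in\mathcal{H}_c$. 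Controlling this passage from $\mathfrak{p}$-largeness to genuine precompactness of the orbit is the crux of the proof.
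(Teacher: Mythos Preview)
The paper does not prove this theorem at all: it is quoted verbatim as a result of Bergelson \cite{Be} (Theorems~4.4 and~4.5 there), so there is no ``paper's own proof'' to compare against. Your outline is in fact essentially Bergelson's argument---extend the representation to a continuous homomorphism $\Phi:\beta G\to\Sigma=\overline{\{U_g\}}^{\mathrm{WOT}}$, observe that $P=\Phi(\mathfrak{p})$ is a contractive idempotent and hence an orthogonal projection, and identify its range with $\mathcal{H}_c$ using the minimality of $\mathfrak{p}$.

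One remark on the point you flag as the crux. You do not need to pass through the full JdLG machinery or to upgrade ``$\mathfrak{p}$-largeness'' of return sets to syndeticity. Once you know $P\psi=\psi$, argue as follows: for any $\mathfrak{q}\in\beta G$, the element $\mathfrak{q}\mathfrak{p}$ lies in the minimal left ideal $\beta G\cdot\mathfrak{p}$, so there exists $\mathfrak{r}$ with $\mathfrak{r}\,\mathfrak{q}\,\mathfrak{p}=\mathfrak{p}$; applying $\Phi$ gives $\Phi(\mathfrak{r})\Phi(\mathfrak{q})\psi=\Phi(\mathfrak{r})\Phi(\mathfrak{q})P\psi=P\psi=\psi$, and since $\Phi(\mathfrak{r})$ is a contraction this forces $\|\Phi(\mathfrak{q})\psi\|=\|\psi\|$. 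Thus every weak $\mathfrak{q}$-limit of the orbit $\{U_g\psi\}$ has full norm and is therefore a norm limit, which means the orbit is norm-precompact, i.e.\ $\psi\in\mathcal{H}_c$. This closes the ``reverse inclusion'' cleanly and is exactly how minimality of $\mathfrak{p}$ enters.
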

	
	\subsection{Measure decomposition}
	Let $X$ be a compact metric space. Denote by $\mathcal{B}_X$ the Borel $\sigma$-algebra of $X$ and $\mathcal{M}(X)$ be the set of all Borel probability measures on $X$. For $\mu\in \mathcal{M}(X)$,  denote by $\mathcal{B}_X^\mu$
	the completion of $\mathcal{B}_X$ under $\mu$ and denote by $\text{supp} \mu$ the support of $\mu$, i.e. the smallest closed subset of $X$ with full measure. Under the weak$^*$ topology, $\mathcal{M}(X)$ is a nonempty compact convex space. 	For a sub-$\sigma$-algebra  $\mathcal{F}$ of  $\mathcal{B}_X^\mu$,
	$\mu$ can be disintegrated over $\mathcal{F}$ as $\mu = \int_X
	\mu_{\mathcal{F},x}d\mu(x)$ where $\mu_{\mathcal{F},x} \in \mathcal{M}(X)$. The disintegration is characterized by the properties
	\eqref{1} and \eqref{2} below:
	\begin{align}\label{1}\text{for every }f \in L^1(X, \mathcal{B}_X^\mu, \mu), f \in L^1(X, \mathcal{B}_X^\mu, \mu_{\mathcal{F},x}) \text{ for } \mu\text{-a.e. }x \in X,
	\end{align}
	and the map $x\to \int_Xfd\mu_{\mathcal{F},x}$
	is in $L^1(X, \mathcal{F}, \mu)$;
	\begin{align}\label{2}\text{for every }f \in L^1(X, \mathcal{B}_X^\mu, \mu),\mathbb{E}_\mu( f|\mathcal{F})(x)= \int_Xfd\mu_{\mathcal{F},x} \text{ for } \mu\text{-a.e. }x \in X.
	\end{align}
	If $\mathcal{F}',\mathcal{F}$ are sub-$\sigma$-algebras of $\mathcal{B}_X^\mu$ with $\mathcal{F}'\subset \mathcal{F}$ then for every $f \in L^1(X, \mathcal{B}_X^\mu, \mu)$, $$\mathbb{E}_\mu( f|\mathcal{F}')(x)=\mathbb{E}_\mu(\mathbb{E}_\mu( f|\mathcal{F})|\mathcal{F}')(x) \text{ for } \mu\text{-a.e. }x \in X.$$
	That is, 
	\begin{align*}
		\int_Xfd\mu_{\mathcal{F}',x}= \int_X \int_Xf d\mu_{\mathcal{F},y} d\mu_{\mathcal{F}',x}(y)\text{ for all }f\in L^1(X, \mathcal{B}_X^\mu, \mu)\text{ for } \mu\text{-a.e. }x \in X.
	\end{align*}
	Then 
	\begin{align}\label{e-19-01}
		\mu_{\mathcal{F}',x}=  \int_X\mu_{\mathcal{F},y} d\mu_{\mathcal{F}',x}(y)\text{ for } \mu\text{-a.e. }x \in X.
	\end{align}

	Let $(X,G)$ be  a $G$-system. We say $\mu \in \mathcal{M}(X)$ is $G$-invariant if $\mu (g^{-1} B) = \mu (B)$ holds for all $B \in \mathcal{B}_X^\mu$ and $g \in G$. Denote by $\mathcal{M}(X,G)$ the set of  $G$-invariant Borel probability measures of $(X,G)$.
	We say $B \in \mathcal{B}_X^\mu$ is $G$-invariant if $g^{-1} B = B$ for all $g \in G$,
	and $\mu \in \mathcal{M}(X,G)$ is ergodic if for any $G$-invariant Borel set $B \in \mathcal{B}_X^\mu$, $\mu (B) = 0$ or $\mu(B) = 1$ holds. Denote by $\mathcal{M}^e(X,G)$ the set of ergodic measures of $(X,G)$. 
	Since $G$ is countable,	it is well known that   $\mathcal{M}^e(X,G)$ is the collection of all extreme points of $\mathcal{M}(X,G)$.  Using Choquet representation theorem, for each $\mu \in \mathcal{M}(X,G)$ there is a unique measure $\tau$ on the Borel subsets of the  compact space $\mathcal{M}(X,G)$ such that $\tau (\mathcal{M}^e(X,G)) =1$ and $\mu = \int_{\mathcal{M}^e(X,G)} m \mathrm{d} \tau (m)$, which is called the ergodic decomposition of $\mu$.
	If we let $\mathcal{I}_\mu(G)=\{B\in\mathcal{B}_X^\mu:gB=B,\forall g\in G\}$, the ergodic decomposition of $\mu$ is just the  disintegration of $\mu$ over $\mathcal{I}_\mu(G)$.
	
	\subsection{Sequence entropy}Let $X$ be a compact metric space. A cover of $X$ is a finite family of subsets of $X$ whose union is $X$. A partition of $X$ is a cover of $X$ whose elements are pairwise disjoint. Denote by $\mathcal{C}_X$ (resp. $\mathcal{C}^o_X$) the set of all finite covers (resp. finite open covers) of $X$ and by $\mathcal{P}_X$ the set of all finite partitions of $X$.
	
	Let $(X,G)$ be a $G$-system.
	Let $\mathcal{A}=(g_n)_{n=1}^\infty$ be a sequence of $G$ and $\mathcal{U}\in \mathcal{C}^o_X$. The topological sequence entropy of $\mathcal{U}$ with respect to $(X,G)$ along $\mathcal{A}$ is defined by 
	$$h_{top}^\mathcal{A}(G,\mathcal{U})=\limsup_{n\to\infty}\frac{1}{n}\log N(\bigvee_{i=1}^ng_i^{-1}\mathcal{U}),$$ 
	where $N(\bigvee_{i=1}^ng_i^{-1}\mathcal{U})$ is the minimal cardinality among all cardinalities of sub-covers of
	$\bigvee_{i=1}^ng_i^{-1}\mathcal{U}$. The topological sequence entropy of $(X,G)$ along $\mathcal{A}$ is 
	$$h_{top}^\mathcal{A}(X,G)=\sup_{\mathcal{U}\in \mathcal{C}^o_X}h_{top}^\mathcal{A}(G,\mathcal{U}).$$
	Define the  supremum of topological sequence entropy  of $(X,G)$ by
	$$h_{top}^\mathcal{*}(X,G)=\sup_{\mathcal{A}}h_{top}^\mathcal{A}(X,G),$$
	where the supremum is taken over all sequences of $G$. 
	
	By the ideas of Bowen \cite{B} and Dinaburg \cite{D}, the topological sequence entropy along $\mathcal{A}=(g_n)_{n=1}^\infty$  can also be
	defined through the separated set. A subset $E$ of  $X$ is a $(n, \epsilon)$-separated set along $\mathcal{A}$ if
	for any distinct $x, y\in E$ there is $j\in\mathbb{N}$ with $1\le  j\le  n$ such that $d(g_jx,g_jy) >\epsilon$.
	Let  $Sep_n^\mathcal{A}(\epsilon)$ be the largest cardinality of any $(n, \epsilon)$-separated set along $\mathcal{A}$. Then 
	$$h_{top}^\mathcal{A}(X,G)=\lim_{\epsilon\to 0}\limsup_{n\to\infty}\frac{1}{n}\log Sep_n^\mathcal{A}(\epsilon).$$
	
	For $\mu\in\mathcal{M}(X)$, define
	$$\mathcal{P}_X^\mu=\{
	\alpha\in \mathcal{P}_X : \text{each element in }\alpha \text{ belongs to }\mathcal{B}_X^\mu
	\}.$$
	Given $\alpha\in\mathcal{P}_X^\mu$
	and a sub-$\sigma$-algebra $\mathcal{A}$ of $\mathcal{B}_X^\mu$, define
	$$H_\mu(\alpha|\mathcal{A}) = \sum_{A\in\alpha}\int_X-\mathbb{E}_\mu(1_A|\mathcal{A})\log \mathbb{E}_\mu(1_A|\mathcal{A})d\mu,$$
	where $\mathbb{E}_\mu(1_A|\mathcal{A})$ is the expectation of $1_A$ with respect to $\mathcal{A}$. One standard fact is that
	$H_\mu(\alpha|\mathcal{A})$ increases with respect to $\alpha$ and decreases with respect to $\mathcal{A}$. Set $\mathcal{N}=\{\emptyset,X \}$ and define
	$$H_\mu(\alpha)=H_\mu(\alpha|\mathcal{N})=-\sum_{A\in \alpha}\mu(A)\log \mu(A).$$ Let $(X,\mu,G)$ be a $G$-measure preserving system. The sequence entropy of $\alpha$ with respect to $(X, \mu, G)$ along $\mathcal{A}$ is 
	$$h_{\mu}^\mathcal{A}(G,\alpha)=\limsup_{n\to\infty}\frac{1}{n}H_\mu( \vee_{i=1}^ng_i^{-1}\alpha).$$  The sequence entropy of $\alpha$ with respect to $(X, \mu, G)$ is  
	$$h_{\mu}^\mathcal{*}(G,\alpha)=\sup_{\mathcal{A}}h_{\mu}^\mathcal{A}(G,\alpha),$$
	where the supremun is taken over all sequences of $G$.
	Define the  supremum of sequence entropy of $(X,\mu,G)$ by 
	$$h_{\mu}^*(G)=\sup_{\alpha\in\mathcal{P}_X^\mu}h_{\mu}^\mathcal{A}(G).$$
	
	For $\mu\in\mathcal{M}(X,G)$, define the Kronecker $\sigma$-algebra of $(X,\mu,G)$  by $$\mathcal{K}_\mu(G)=\{B\in\mathcal{B}_X^\mu:\{U_g1_B:g\in G\}\text{ is precompact in }L^2(\mu)\},$$ 
	where $U_gf(x)=f(gx)$ for $g\in G,f\in L^2(\mu),x\in X$.	It is easy to see that $\mathcal{K}_\mu(G)$ is a $G$-invariant sub-$\sigma$-algebra of $\mathcal{B}_X^\mu$. 
	For $\mu\in\mathcal{M}^e(X,G)$, let $\mu=\int_X\mu_xd\mu(x)$ be the disintegration of $\mu$ with respect to $\mathcal{K}_\mu(G)$. If $\mu$ is ergodic, there is $k_\mu\in\mathbb{N}\cup\{\infty\}$ such that $|\text{supp}\mu_x|\in \mathbb{N}\cup\{\infty\}$. The following theorem shows the relation between sequence entropy and  Kronecker $\sigma$-algebra. 
	\begin{thm}\label{th-3}
		Let $(X,\mu,G)$ be an ergodic $G$-measure preserving system. Let $\mu=\int_X\mu_xd\mu(x)$ be the disintegration of $\mu$ with respect to $\mathcal{K}_\mu(G)$. Then 
		$h^{*}_{\mu}(G)=\log k_\mu$.
	\end{thm}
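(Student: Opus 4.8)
The plan is to prove the two inequalities $h_\mu^*(G)\le\log k_\mu$ and $h_\mu^*(G)\ge\log k_\mu$ separately, writing throughout $h_\mu^*(G)=\sup_{\alpha\in\mathcal{P}_X^\mu}\sup_{\mathcal{A}}h_\mu^{\mathcal{A}}(G,\alpha)$. The whole argument rests on first understanding the shape of the fibre measures $\mu_x$. Since $\mathcal{K}_\mu(G)$ is $G$-invariant, uniqueness of the disintegration gives the equivariance $g_*\mu_x=\mu_{gx}$ for $\mu$-a.e.\ $x$ and all $g\in G$, so the function $\psi(x)=\mu_x(\{x\})$ satisfies $\psi(gx)=(g_*\mu_x)(\{gx\})=\mu_x(\{x\})=\psi(x)$ and is therefore $G$-invariant; by ergodicity $\psi\equiv c$ a.e. When $\mathrm{supp}\,\mu_x$ is finite of cardinality $k_\mu$ this forces $c=1/k_\mu$, i.e.\ $\mu_x$ is \emph{uniform} on its $k_\mu$ atoms, while $k_\mu=\infty$ forces $c=0$, i.e.\ $\mu_x$ is non-atomic. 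In both cases the entropy of $\mu_x$ equals $\log k_\mu$, and since $H_\mu(\alpha\mid\mathcal{K}_\mu(G))=\int_X H_{\mu_x}(\alpha)\,d\mu(x)$, the monotone convergence theorem for conditional entropy yields
\[
\sup_{\alpha\in\mathcal{P}_X^\mu}H_\mu(\alpha\mid\mathcal{K}_\mu(G))=\int_X H(\mu_x)\,d\mu(x)=\log k_\mu .
\]

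\emph{Upper bound.} For $\alpha\in\mathcal{P}_X^\mu$, a sequence $\mathcal{A}=(g_i)_{i\ge1}$ and a finite partition $\gamma$ with $\gamma\subset\mathcal{K}_\mu(G)$, I would combine $G$-invariance of $\mu$ with subadditivity and monotonicity of conditional entropy to obtain, for every $n$,
\[
H_\mu\Big(\bigvee_{i=1}^n g_i^{-1}\alpha\Big)\le H_\mu\Big(\bigvee_{i=1}^n g_i^{-1}\gamma\Big)+\sum_{i=1}^n H_\mu\big(g_i^{-1}\alpha\mid g_i^{-1}\gamma\big)=H_\mu\Big(\bigvee_{i=1}^n g_i^{-1}\gamma\Big)+n\,H_\mu(\alpha\mid\gamma).
\]
Each $C\in\gamma$ lies in $\mathcal{K}_\mu(G)$, so $1_C$ is a compact vector and the factor generated by $\gamma$ has discrete spectrum; by Kushnirenko's theorem \cite{Ku} its sequence entropy vanishes, i.e.\ $h_\mu^{\mathcal{A}}(G,\gamma)=0$. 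Dividing by $n$, letting $n\to\infty$, and then letting $\gamma$ increase to $\mathcal{K}_\mu(G)$ gives $h_\mu^{\mathcal{A}}(G,\alpha)\le H_\mu(\alpha\mid\mathcal{K}_\mu(G))$. Taking the supremum over $\mathcal{A}$ and $\alpha$ and invoking the displayed identity above yields $h_\mu^*(G)\le\log k_\mu$.

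\emph{Lower bound.} This is where Theorem \ref{thm:weak mixing} enters. Fix $N\le k_\mu$ and a partition $\alpha=\{A_1,\dots,A_N\}$ whose cells have \emph{constant} conditional mass $\mathbb{E}_\mu(1_{A_j}\mid\mathcal{K}_\mu(G))\equiv p_j$ (the uniform labelling of the $k_\mu$ fibre points when $k_\mu<\infty$, or an equal splitting of the non-atomic $\mu_x$ when $k_\mu=\infty$), so that $H_\mu(\alpha\mid\mathcal{K}_\mu(G))=-\sum_j p_j\log p_j$ is as close to $\log k_\mu$ as desired. Then $w_j:=1_{A_j}-p_j$ has zero conditional expectation over $\mathcal{K}_\mu(G)$, so $w_j$ is orthogonal to the space of compact vectors $\mathcal{H}_c=L^2(X,\mathcal{K}_\mu(G),\mu)$ and hence $w_j\in\mathcal{H}_{wm}$. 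Choosing a minimal idempotent $\mathfrak{p}\in\beta G$ (which exists by the Ellis--Numakura theorem \cite{El} together with the structure of minimal left ideals), Theorem \ref{thm:weak mixing} gives $\mathfrak{p}\text{-}\lim_{g}U_g w_j=0$ weakly. I would then build $\mathcal{A}=(g_i)$ recursively: having chosen $g_1,\dots,g_{m-1}$, use weak $\mathfrak{p}$-convergence to $0$ to select $g_m$ so that $|\langle U_{g_m}w_a,\eta\rangle|\le\varepsilon_m$ for every label $a$ and every $\eta$ in the finite family of products $\prod_{i\in S}U_{g_i}w_{a_i}$ with $S\subset\{1,\dots,m-1\}$. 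Expanding $\mu(\bigcap_{i=1}^n g_i^{-1}A_{a_i})=\int\prod_{i=1}^n(p_{a_i}+U_{g_i}w_{a_i})\,d\mu$ and estimating the cross terms by the largest index of $S$, a geometrically decreasing choice of $\varepsilon_m$ forces $|\mu(\bigcap_{i=1}^n g_i^{-1}A_{a_i})-\prod_{i=1}^n p_{a_i}|\le\varepsilon'\prod_{i=1}^n p_{a_i}$ uniformly in $n$. Consequently $\tfrac1n H_\mu(\bigvee_{i=1}^n g_i^{-1}\alpha)\to(1-O(\varepsilon'))\,H_\mu(\alpha\mid\mathcal{K}_\mu(G))$, whence $h_\mu^*(G)\ge(1-O(\varepsilon'))H_\mu(\alpha\mid\mathcal{K}_\mu(G))$; letting $\varepsilon'\to0$ and $\alpha$ approximate the full fibre gives $h_\mu^*(G)\ge\log k_\mu$.

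\emph{Main obstacle.} The delicate step is the recursive independence construction in the lower bound: the $\mathfrak{p}$-limit yields only weak convergence to $0$ for each \emph{fixed} test vector, so the bookkeeping must be arranged so that the $2^n$ cross terms are controlled \emph{uniformly in $n$}; this is exactly what the summable choice of $\varepsilon_m$ achieves and what upgrades the entropy rate from a fraction of $\log k_\mu$ to its full value. A secondary technical point is the measurable selection producing a partition with exactly constant conditional masses, together with the separate bookkeeping needed when $k_\mu=\infty$.
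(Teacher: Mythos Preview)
Your proof is correct; the overall architecture (two inequalities, upper bound via zero sequence entropy on $\mathcal{K}_\mu$, lower bound via Bergelson's weak-mixing dichotomy Theorem~\ref{thm:weak mixing}) matches the paper's, but the execution of the \emph{lower bound} is genuinely different.

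The paper first proves the stronger intermediate statement $h_\mu^*(G,\alpha)=H_\mu(\alpha\mid\mathcal{K}_\mu)$ for \emph{every} finite partition $\alpha$ (Theorem~\ref{lem-4}). For the lower bound it does not attempt to make the cylinder events $\bigcap_i g_i^{-1}A_{a_i}$ approximately independent. Instead it fixes an arbitrary finite partition $\beta$, uses $1_{A_k}-\mathbb{E}(1_{A_k}\mid\mathcal{K}_\mu)\in\mathcal{H}_{wm}$ to find $(g_i)$ with $\langle U_{g_i}(1_{A_k}-\mathbb{E}(1_{A_k}\mid\mathcal{K}_\mu)),1_{B_j}\rangle\to 0$, and then applies Jensen's inequality for $-x\log x$ to obtain $\liminf_i H_\mu(g_i^{-1}\alpha\mid\beta)\ge H_\mu(\alpha\mid\mathcal{K}_\mu)$. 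Feeding $\beta=\bigvee_{j<i}h_j^{-1}\alpha$ into this estimate inductively yields a sequence $(h_i)$ with $H_\mu(h_i^{-1}\alpha\mid\bigvee_{j<i}h_j^{-1}\alpha)\ge H_\mu(\alpha\mid\mathcal{K}_\mu)-2^{-i}$, and the chain rule finishes. Only afterwards does the paper specialise to a partition with $\mu_x(A)=1/k_\mu$ (using Rokhlin's Lemma~\ref{ll-1} when $k_\mu=\infty$, exactly your ``equal splitting''). Your route, by contrast, restricts from the outset to partitions with constant conditional masses, expands the $2^n$ cross terms, and forces a multiplicative error bound via a geometrically decaying $\varepsilon_m$; this is legitimate (the required decay rate depends on $N$ and $\min_j p_j$, but that is fixed), though it requires controlling $(1+N)^{m-1}$ test vectors at step $m$ rather than the single partition $\beta$ the paper needs. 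The paper's Jensen trick is lighter bookkeeping and yields the partition-wise identity $h_\mu^*(G,\alpha)=H_\mu(\alpha\mid\mathcal{K}_\mu)$ as a bonus; your argument is more hands-on and delivers only what is needed for $h_\mu^*(G)=\log k_\mu$.

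Two small remarks. First, your citation of Kushnirenko for ``sequence entropy of a compact factor vanishes'' is for $\mathbb{Z}$-actions; the paper supplies the one-paragraph general-$G$ version as Lemma~\ref{lem-11}. Second, your claim that $k_\mu=\infty$ forces $\mu_x$ non-atomic needs the standard fact that $\mu_y=\mu_x$ for $\mu_x$-a.e.\ $y$ in a disintegration; with that, $\mu_x(\{x\})\equiv 0$ indeed rules out all atoms, and your preliminary paragraph goes through.
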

	
	We will prove the above theorem in Appendix B. In the case $G$ is amenable, one may see \cite{HMY} and \cite{LY} for proofs.

	\section{Proof of Theorem \ref{thm-5}}
	In this section, we give a  useful  method (Proposition \ref{p-1}) to estimate the lower bound of  the  supremum of topological sequence entropy and prove Theorem \ref{thm-5}.
	
	\begin{prop}\label{p-1}Let $(X,G)$ be a $G$-system, $\gamma\in(0,1)$ and $k\in\mathbb{N}\setminus\{1\}$. Assume that there is $\epsilon>0$  such that for $M\in\mathbb{N}$ and  nonempty open subsets $U_1,U_2,\cdots, U_M$ of $X$ there exist  $g\in G$, $\mathcal{M}\subset\{1,2,\cdots,M\}$ and  $x_{m,k}\in U_m$ for  $k\in\{1,2,\cdots,K\}$, $m\in\mathcal{M}$  such that 
		\begin{itemize}
			\item[(1)] $|\mathcal{M}|\ge (1-\gamma)M$;
			\item[(2)] 
			$d(gx_{m,k},gx_{m,k'})>\epsilon,\text{ for } 1\le k<k'\le K, m\in\mathcal{M}$.
		\end{itemize} 
		Then $h^*_{top}(X,G)\ge \log((1-\gamma)K)$.
	\end{prop}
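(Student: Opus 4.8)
The plan is to manufacture, directly from the hypothesis, a single sequence $\mathcal{A}=(g_n)_{n=1}^\infty$ in $G$ along which exponentially many points are $(n,\epsilon)$-separated, and then feed this into the separated-set description
$$h_{top}^\mathcal{A}(X,G)=\lim_{\epsilon'\to0}\limsup_{n\to\infty}\tfrac1n\log Sep_n^\mathcal{A}(\epsilon')$$
recalled in the preliminaries. Concretely, I would build a rooted tree of nonempty open subsets of $X$, level by level, where passing from level $j-1$ to level $j$ is exactly one application of the hypothesis and contributes the single term $g_j$ to $\mathcal{A}$.

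Start at level $0$ with the single open set $X$, so $M_0=1$. Assume the level-$(j-1)$ collection $U_1,\dots,U_{M_{j-1}}$ is already built. Applying the hypothesis to it produces $g_j\in G$, a subset $\mathcal{M}\subset\{1,\dots,M_{j-1}\}$ with $|\mathcal{M}|\ge(1-\gamma)M_{j-1}$, and for each $m\in\mathcal{M}$ points $x_{m,1},\dots,x_{m,K}\in U_m$ with $d(g_jx_{m,k},g_jx_{m,k'})>\epsilon$ for $k\ne k'$. The key technical step is to upgrade separated \emph{points} to separated \emph{open sets}: since $g_j$ is a homeomorphism and there are only finitely many pairs, for each $m\in\mathcal{M}$ I can shrink to nonempty open neighborhoods $V_{m,k}\subset U_m$ of $x_{m,k}$ so small that
$$d(g_jy,g_jy')>\epsilon\quad\text{for all }y\in V_{m,k},\ y'\in V_{m,k'},\ k\ne k'.$$
I declare these $V_{m,k}$ (for $m\in\mathcal{M}$, $1\le k\le K$) to be the children of $U_m$ and the level-$j$ collection. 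Thus $M_j=|\mathcal{M}|\cdot K\ge(1-\gamma)K\,M_{j-1}$, each child is contained in its parent, and the $K$ children of a common parent are pairwise $g_j$-separated in the strong (uniform-over-the-whole-set) sense above. Iterating builds the entire tree together with $\mathcal{A}=(g_n)_{n=1}^\infty$, and by induction $M_n\ge((1-\gamma)K)^n$.

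Next I would read off a separated set from the leaves. Choose one point in each level-$n$ open set. Given two distinct leaves, their branches agree up to some level and first split at a level $j\le n$ into two distinct children of a common parent; since each leaf is contained in its own level-$j$ ancestor and sibling children are $g_j$-separated, the two chosen points $x,y$ satisfy $d(g_jx,g_jy)>\epsilon$ with $j\le n$. In particular the chosen points are pairwise distinct and form an $(n,\epsilon)$-separated set along $\mathcal{A}$, so $Sep_n^\mathcal{A}(\epsilon)\ge M_n\ge((1-\gamma)K)^n$. Using monotonicity of $Sep_n^\mathcal{A}(\epsilon')$ as $\epsilon'\downarrow0$, this gives
$$h_{top}^\mathcal{A}(X,G)\ge\limsup_{n\to\infty}\tfrac1n\log Sep_n^\mathcal{A}(\epsilon)\ge\log((1-\gamma)K),$$
and taking the supremum over all sequences yields $h_{top}^*(X,G)\ge h_{top}^\mathcal{A}(X,G)\ge\log((1-\gamma)K)$.

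I expect the main obstacle to be bookkeeping rather than depth. One must make the shrinking step clean enough that the \emph{strict} separation $>\epsilon$ is genuinely inherited by all descendants, not merely by the centers $x_{m,k}$; this follows because $V_{m,k}\subset U_m$ and the finitely many homeomorphisms $g_1,\dots,g_n$ used up to level $n$ are continuous, so one shrinks with a fixed margin at each split. One must also check that the separation witnessed at the first splitting level $j$ is exactly the one the definition of an $(n,\epsilon)$-separated set along $\mathcal{A}$ requires, which is immediate from the nesting of the tree. No compactness or measure-theoretic input beyond the hypothesis is needed.
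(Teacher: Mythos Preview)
Your argument is correct and follows essentially the same strategy as the paper's proof: both build the sequence $\mathcal{A}=(g_n)$ inductively, at each step applying the hypothesis to a current family of pairwise separated open sets, shrinking the resulting points to small open neighbourhoods so that the strict $\epsilon$-separation is inherited by all descendants, and then reading off the exponential lower bound for $Sep_n^{\mathcal{A}}(\epsilon)$. The only cosmetic differences are that the paper tracks the quantity $\text{Sep}_{\{g_1,\dots,g_n\}}(\epsilon)$ directly (re-choosing open neighbourhoods of a maximal separated set at every step) while you phrase it as a nested tree of open sets, and that your indexing yields $M_n\ge((1-\gamma)K)^n$ versus the paper's $(K(1-\gamma))^{n-1}$; neither affects the conclusion.
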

	\begin{proof}For any finite subset $E$ of $G$, denote by  $\text{Sep}_E(\epsilon)$ the maximal number $L$ such that there is $x_1,x_2,\cdots,x_L\in X$ with $\max_{g\in E}d(gx_l,gx_{l'})>\epsilon$  for all $1\le l<l'\le L$. We are going to show that there is a sequence $\mathcal{A}=\{g_1,g_2,\cdots\}$ of $G$ such that $\text{Sep}_{\{g_1,g_2,\cdots,g_n\}}(\epsilon)\ge (K(1-\gamma))^{n-1}$. 
		
		Let $g_1\in G$. It is clear that  $\text{Sep}_{\{g_1\}}(\epsilon)\ge 1= (K(1-\gamma))^{1-1}$. Now assume that we have $g_1,g_2,\cdots,g_n\in G$ for some $n\in\mathbb{N}$ with $\text{Sep}_{\{g_1,g_2,\cdots,g_n\}}(\epsilon)\ge(K(1-\gamma))^{n-1}$. Then there is nonempty open subsets $U_1,U_2,\cdots,U_{\text{Sep}_{\{g_1,g_2,\cdots,g_n\}}(\epsilon)}$ of $X$ such that 
		$$\max_{1\le i\le n}d(g_iU_m,g_iU_{m'})>\epsilon, \text{ for }1\le m<m'\le \text{Sep}_{\{g_1,g_2,\cdots,g_n\}}(\epsilon).$$
		By assumption, there is  $g_{n+1}\in G$, $\mathcal{M}\subset\{1,2,\cdots,\text{Sep}_{\{g_1,g_2,\cdots,g_n\}}(\epsilon)\}$ and  $x_{m,k}\in U_m$ for  $k\in\{1,2,\cdots,K\}$, $m\in\mathcal{M}$  such that 
		\begin{itemize}
			\item[(1)] $|\mathcal{M}|\ge (1-\gamma)\text{Sep}_{\{g_1,g_2,\cdots,g_n\}}(\epsilon)$;
			\item[(2)] 
			$d(g_{n+1}x_{m,k},g_{n+1}x_{m,k'})>\epsilon,\text{ for } 1\le k<k'\le K, m\in\mathcal{M}$.
		\end{itemize} 
		Then for $  k,k'\in \{1,2,\cdots,K\}, m,m'\in\mathcal{M}$ with $(m,k)\neq (m',k')$, one has 
		$$\max_{1\le i\le n+1}d(g_ix_{m,k},g_ix_{m',k'})>\epsilon.$$
		This implies $\text{Sep}_{\{g_1,g_2,\cdots,g_n,g_{n+1}\}}(\epsilon)\ge (K(1-\gamma))^{n}$. By induction, we get the required sequence $\mathcal{A}=\{g_1,g_2,\cdots\}$ of $G$. Therefore	$$h^*_{top}(X,G)\ge h^\mathcal{A}_{top}(X,G)\ge\limsup_{n\to\infty}\frac{\log \text{Sep}_{\{g_1,g_2,\cdots,g_n,g_{n+1}\}}(\epsilon)}{n}\ge \log( K(1-\gamma)).$$
		This  ends the proof of Proposition \ref{p-1}.
	\end{proof}	
	Now we prove Theorem \ref{thm-5}.
	\begin{proof}[Proof of Theorem \ref{thm-5}]
		Let $G$ be an infinite discrete countable abelian group and  $(X,G)$ be  a minimal  $G$-system.  Let $(H,G)$ be the maximal equicontinuous factor of $(X,G)$, $\pi$ be the factor map and $\nu$ be the uniquely ergodic measure of $(H,G)$.

		Since $(H,\nu,G)$ is ergodic, there is $P \in\mathbb{N}\cup\{\infty\}$ such that $|\pi^{-1}(y)|=P$ for $\nu$-a.e. $y\in H$. Hence the set $\{y\in H:|\pi^{-1}(y)|\ge K\}$ has measure $0$ or $1$  for $K\in\mathbb{N}$. Fix $K\in\mathbb{N}$ with	$\nu(\{y\in H:|\pi^{-1}(y)|\ge K\})=1$. To prove Theorem \ref{thm-5}, it is enough to show that 
		$ h_{top}^*(X,G)\ge \log K$.

		Fix $\gamma\in(0,1)$. There is $\epsilon>0$ such that the set
		\begin{align}\label{e-11}W:=\{y\in H:\text{Sep}(\pi^{-1}(y),2\epsilon)\ge K\}
		\end{align}
		has measure larger than $1-\gamma$ where $\text{Sep}(\pi^{-1}(y),2\epsilon)$ is the maximal number $L$ such that there is $x_1,x_2,\cdots,x_L\in \pi^{-1}(y)$ with $d(x_l,x_{l'})>2\epsilon$  for all $1\le l<l'\le L$.
		
		Now we verify the conditions in Proposition \ref{p-1}. Fix $M\in \mathbb{N}$ and nonempty open subsets $U_1,U_2,\cdots,U_M$ of $X$. Since $(X,G)$ is minimal, there is a nonempty open subset $U$ of $X$ and $g_1,g_2,\cdots,g_M\in G$ such that 
		$$g_mU\subset U_m, \text{ for all } m\in\{1,2,\cdots,M\}.$$
		Notice that 
		$$\int_{H} \sum_{m=1}^M1_{W}(g_my)d\nu(y)=M\nu(W)\ge (1-\gamma)M.$$ There is $y_0\in H$ such that 
		$$|\{m\in\{1,2,\cdots,M\}:g_my_0\in W\}|\ge (1-\gamma)M.$$
		Put
		$$\mathcal{M}=\{m\in\{1,2,\cdots,M\}:g_my_0\in W\}.$$
		By \eqref{e-11}, there exists $z_{mk}\in \pi^{-1}(g_my_0),m\in\mathcal{M},k=1,2,\cdots,K$ such that 
		$$d(z_{mk},z_{mk'})>2\epsilon,\text{ for }m\in\mathcal{M}, 1\le k<k'\le K.$$ 
		Fix $\delta>0$ such that 
		\begin{align}\label{e0}x,x'\in X,d(x,x')<\delta\text{ implies }d(g_mx,g_mx')<\frac{\epsilon}{2},\text{ for all }m\in\mathcal{M}.
		\end{align}
		Notice that 
		$$\{g_m^{-1}z_{mk}:m\in\mathcal{M},k\in\{1,2,\cdots,K\}\}\subset \pi^{-1}(y_0).$$
		Since $G$ is abelian and $(X,G)$ is minimal, one has  $\pi^{-1}(y_0)\times\pi^{-1}(y_0)\subset RP_2(X,G)$.	By Theorem \ref{rp}, 
		$(g_m^{-1}z_{mk})_{m\in\mathcal{M},k\in\{1,2,\cdots,K\}}\in RP_{|\mathcal{M}|K}(X,G).$	
		There is $h\in G$ and $z_{mk}'\in B(g_m^{-1}z_{mk},\delta)$ for $ m\in\mathcal{M},k\in\{1,2,\cdots,K\}$ such that 
		$$hz_{mk}'\in U, \text{ for } m\in\mathcal{M},1\le k\le K.$$ Put $x_{mk}=g_mh z_{mk}', \text{ for } m\in\mathcal{M},1\le k\le K.$ Then
		$$x_{mk}=g_mh z_{mk}'\in g_mU\subset U_m$$
		for $m\in\mathcal{M},k\in\{1,2,\cdots,K\}$
		and since each $z_{mk}'\in B(g_m^{-1}z_{mk},\delta)$ one has by \eqref{e-11} and \eqref{e0} that
		\begin{align*}d(h^{-1}x_{mk},h^{-1}x_{mk'})&=d(g_mz_{mk}',g_mz_{mk'}')\\
			&\ge d(z_{mk},z_{mk'})-d(g_mz_{mk}',z_{mk})-d(g_mz_{mk'}',z_{mk'})\\
			&>\epsilon\end{align*}
		for $m\in\mathcal{M},1\le k<k'\le K$. Therefore, the condition in Proposition \ref{p-1} is valid for $(X,G)$.
		
		By Proposition \ref{p-1}, $h_{top}^*(X,G)\ge \log(1-\gamma)K$.
		Let $\gamma\to 0$. One has 
		$$h_{top}^*(X,G)\ge \log K.$$
		This ends the proof of Theorem \ref{thm-5}.
	\end{proof}
	
	\section{Proof of Theorem \ref{Thm-B}}
	In this section, we are going to prove Theorem \ref{Thm-B}. To do this, we need some lemmas firstly.
	\begin{lem}\label{lem-1}Let $k\in\mathbb{N}\setminus\{1\}$ and  $\epsilon>0$. Let $L_i,i=1,2,\cdots,k$ be subsets of $X$ with
		$$|L_i|=k, d(x,y)>2\epsilon,\text{ for } x,y\in L_i,x\neq y,$$
		for each $i$.	Then there exists $x_i\in L_i$ for $i=1,2,\cdots,k$ such that
		$$d(x_{i},x_{i'})>\epsilon,\text{ for } 1\le i<i'\le k.$$ 
	\end{lem}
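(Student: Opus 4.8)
The plan is to choose the representatives $x_1,\dots,x_k$ greedily, one index at a time, using the $2\epsilon$-separation inside each $L_i$ to bound how many candidates are ruled out at each stage. The whole argument rests on a single elementary observation: for any fixed point $z\in X$ and any index $i$, at most one element of $L_i$ can lie within distance $\epsilon$ of $z$. Indeed, if $y,y'\in L_i$ were distinct with $d(z,y)\le\epsilon$ and $d(z,y')\le\epsilon$, then the triangle inequality would force $d(y,y')\le 2\epsilon$, contradicting the assumption that distinct points of $L_i$ are more than $2\epsilon$ apart. Hence each point already selected can ``forbid'' at most one element of every $L_i$.

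With this observation in hand I would run an induction on $i$. Start by choosing $x_1\in L_1$ arbitrarily. Assume $x_1,\dots,x_{i-1}$ have been chosen with pairwise distances exceeding $\epsilon$, for some $2\le i\le k$, and call $y\in L_i$ \emph{forbidden} if $d(x_j,y)\le\epsilon$ for some $j<i$. By the observation each of the $i-1$ earlier points forbids at most one element of $L_i$, so at most $i-1$ elements of $L_i$ are forbidden in total. Since $|L_i|=k$ and $i-1\le k-1<k$, some $y\in L_i$ is not forbidden; set $x_i:=y$, which then satisfies $d(x_j,x_i)>\epsilon$ for all $j<i$. After $k$ steps this yields $x_1,\dots,x_k$ with $d(x_i,x_{i'})>\epsilon$ for all $1\le i<i'\le k$, as required.

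Since the argument is purely a counting estimate built on the triangle inequality, I do not expect a serious obstacle. The only place demanding care is the bookkeeping at the final stage $i=k$, where as many as $k-1$ elements of $L_k$ may be forbidden and one must verify that at least $k-(k-1)=1$ candidate survives; this is precisely where the hypothesis $|L_i|=k$ enters at full strength. All remaining verifications are immediate applications of the triangle inequality.
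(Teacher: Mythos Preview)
Your proof is correct. The key observation---that any single point can exclude at most one element of each $L_i$ by the triangle inequality---is exactly what drives both arguments, but the way you organize it differs from the paper.

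The paper inducts on the parameter $k$: given sets $L_1,\dots,L_{k+1}$ each of size $k+1$, it first passes to subsets $L_i'\subset L_i$ of size $k$ for $i\le k$, invokes the inductive hypothesis to obtain $x_1,\dots,x_k$, and only then picks $x_{k+1}\in L_{k+1}$ by the pigeonhole step. Your argument instead fixes $k$ and runs a straight greedy loop on the index $i$, choosing the $x_i$ one by one and counting forbidden elements at each stage. This is cleaner: it avoids the somewhat artificial reduction to smaller subsets and makes transparent exactly where the hypothesis $|L_i|=k$ is used (namely at the final step $i=k$, as you note). The paper's induction on $k$ buys nothing extra here; your direct iteration is the more economical packaging of the same combinatorial content.
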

	\begin{proof}If $k=2$, we let $x_1\in L_1$. Since $|L_2|=2, d(x,y)>2\epsilon,\text{ for all } x,y\in L_2,x\neq y,$ there is $x_2\in L_2$ such that $d(x_1,x_2)>\epsilon$. Hence the lemma is true for $k=2$.
		
		Now we assume that the lemma is true for some $k\ge 2$. Then for $k+1$, there must exist $L_i'\subset L_i,i\in\{1,2,\cdots,k\}$ such that 
		$$|L_i'|=k, d(x,y)>2\epsilon, \text{ for } x,y\in L_i',x\neq y.$$
		By induction, there is $x_i\in L_i',i\in\{1,2,\cdots,k\}$ such that 
		$$d(x_{i},x_{i'})>\epsilon,\text{ for } 1\le i<i'\le k.$$ 
		Note that $|L_{k+1}|=k+1$. Then there exists  $x_{k+1}\in L_{k+1}$, such that  
		$$d(x_{i},x_{i'})>\epsilon,\text{ for } 1\le i<i'\le k+1.$$ 
		We finish the proof of Lemma \ref{lem-1}.
	\end{proof}
	For $\delta>0$ and $K\in\mathbb{N}\setminus\{1\}$, put
	$$X^K_2(\delta)=\{(x_1,x_2,\cdots,x_K)\in X^K:d(x_k,x_{k'})\le\delta\text{ for some }1\le k<k'\le K\}.$$
	\begin{lem}\label{lem}Let $\mu\in\mathcal{M}(X),\delta>0$ and $K\in\mathbb{N}\setminus\{1\}$. 
		If $|\text{supp}\mu|\ge K$, then $\lim_{\delta\to 0}\mu^K(X^K\setminus X^K_2(\delta))>0$.
	\end{lem}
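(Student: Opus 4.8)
The plan is to recognize the limit in the statement as the $\mu^K$-measure of the ``fully off-diagonal'' set and then produce positive mass there directly from the support hypothesis. First I would observe that the sets $X^K\setminus X^K_2(\delta)$ increase as $\delta$ decreases, since requiring all pairwise distances to exceed $\delta$ is a weaker constraint for smaller $\delta$. Each $X^K_2(\delta)$ is closed, being the finite union of the closed sets $\{d(x_k,x_{k'})\le\delta\}$, so its complement is open and in particular $\mu^K$-measurable. Taking $\delta=1/n$ and applying continuity of $\mu^K$ from below, I would identify
$$\lim_{\delta\to 0}\mu^K(X^K\setminus X^K_2(\delta))=\mu^K\Big(\bigcup_{\delta>0}(X^K\setminus X^K_2(\delta))\Big)=\mu^K(X^K\setminus\triangle^{(K)}(X)),$$
where $\triangle^{(K)}(X)$ is the set of tuples with two equal coordinates defined in Section 2. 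The final equality rests on the identity $\bigcap_{\delta>0}X^K_2(\delta)=\triangle^{(K)}(X)$: a tuple lies in every $X^K_2(\delta)$ iff for each $\delta$ some pair of coordinates is within $\delta$, and since there are only finitely many pairs a pigeonhole argument forces a genuine coincidence.

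It then suffices to show $\mu^K(X^K\setminus\triangle^{(K)}(X))>0$, and here I would invoke the hypothesis $|\text{supp}\mu|\ge K$. The plan is to choose $K$ distinct points $p_1,\dots,p_K\in\text{supp}\mu$ and, using that $X$ is metric, pick a radius $r>0$ small enough that the open balls $B(p_i,r)$ are pairwise disjoint. Because each $p_i$ lies in the support, $\mu(B(p_i,r))>0$ for every $i$. The product box $\prod_{i=1}^K B(p_i,r)$ consists entirely of tuples whose coordinates are pairwise distinct (indeed pairwise $2r$-separated, though distinctness is all that is needed), so it is contained in $X^K\setminus\triangle^{(K)}(X)$, and
$$\mu^K\Big(\prod_{i=1}^K B(p_i,r)\Big)=\prod_{i=1}^K\mu(B(p_i,r))>0.$$
Combining this lower bound with the identification of the limit above yields the claim.

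The argument is essentially routine, and the only point requiring genuine care will be the measure-theoretic identification of the limit: one must justify both the monotonicity of $X^K\setminus X^K_2(\delta)$ in $\delta$ and the set identity $\bigcap_{\delta>0}X^K_2(\delta)=\triangle^{(K)}(X)$ before continuity from below can be applied. I do not expect a serious obstacle, since the support hypothesis supplies exactly the $K$ separated points of positive $\mu$-mass that the product measure needs in order to charge the off-diagonal region.
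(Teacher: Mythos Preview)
Your proposal is correct and follows essentially the same idea as the paper: pick $K$ distinct points in $\text{supp}\,\mu$, take small pairwise disjoint balls around them, and use the positive product measure of the resulting box. The only difference is cosmetic: the paper observes directly that this box sits inside $X^K\setminus X^K_2(\delta)$ for all sufficiently small $\delta$ (namely $\delta<r/3$), giving a uniform lower bound without first identifying the limit as $\mu^K(X^K\setminus\triangle^{(K)}(X))$; your extra preliminary step is correct but not needed.
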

	\begin{proof}Since $|\text{supp}\mu|\ge K$, we can find pairwise distinct $x_1,x_2,\cdots,x_K\in \text{supp}\mu$. 
		Put
		$$r:=\min_{1\le k<k'\le K}d(x_k,x_{k'})>0.$$
		Then each $B(x_k,r/3)$ has positive measure and
		$B(x_1,\frac{r}{3})\times B(x_2,\frac{r}{3})\times \cdots\times B(x_K,\frac{r}{3})\subset X^K\setminus X^K_2(\delta)$ for all $\delta<\frac{r}{3}$. Therefore
		\begin{align*}&
			\lim_{\delta\to 0}\mu^K(X^K\setminus X^K_2(\delta))\\
			&\ge \mu^K(B(x_1,r/3)\times B(x_2,r/3)\times\cdots\times B(x_K,r/3))\\
			&=\Pi_{k=1}^K\mu(B(x_k,r/3))\\
			&>0.
		\end{align*}
		This ends the proof of Lemma \ref{lem}.
	\end{proof}
	\begin{lem}\label{l-7}Let $\mu\in\mathcal{M}(X)$. Let $M\in\mathbb{N}$, $\gamma\in(0,1)$ and  $A_1,A_2,\cdots,A_M$ be  measurable subsets  of $X$ with $\mu(A_i)\ge1-\gamma$ for $i=1,2,\cdots,M$. Then there exists $\mathcal{M}\subset\{1,2,\cdots,M\}$ such that 
		\begin{itemize}
			\item[(1)] $|\mathcal{M}|\ge (1-\gamma)M$;
			\item[(2)] $\mu(\cap_{m\in\mathcal{M}}A_m)>0$.
		\end{itemize}
	\end{lem}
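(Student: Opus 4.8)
The plan is to reduce the statement to a first-moment (averaging) argument applied to the counting function $f=\sum_{i=1}^M 1_{A_i}$. First I would record that $f$ is nonnegative, integer-valued, bounded above by $M$, and that $\int_X f\,d\mu=\sum_{i=1}^M\mu(A_i)\ge (1-\gamma)M$ by the hypothesis $\mu(A_i)\ge 1-\gamma$. The whole proof rests on the elementary fact that an integer-valued function with average at least $(1-\gamma)M$ cannot stay below $(1-\gamma)M$ almost everywhere.

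Concretely, I would set $E=\{x\in X: f(x)\ge (1-\gamma)M\}$ and claim $\mu(E)>0$. Suppose for contradiction that $\mu(E)=0$. On $X\setminus E$ the integer $f(x)$ satisfies $f(x)<(1-\gamma)M$, hence $f(x)\le c$, where $c:=\lceil (1-\gamma)M\rceil-1$ is the largest integer strictly below $(1-\gamma)M$; note $c<(1-\gamma)M$. Integrating then gives $\int_X f\,d\mu=\int_{X\setminus E} f\,d\mu\le c<(1-\gamma)M$, contradicting the lower bound above. Therefore $\mu(E)>0$.

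Next I would turn this positive-measure set into a single fixed index set. For $x\in X$ put $\mathcal{M}(x)=\{i\in\{1,\dots,M\}: x\in A_i\}$, so that $f(x)=|\mathcal{M}(x)|$ and $E=\{x:|\mathcal{M}(x)|\ge (1-\gamma)M\}$. Since $\mathcal{M}(x)$ ranges over the finite power set of $\{1,\dots,M\}$, the set $E$ is the disjoint union of the finitely many measurable level sets $E_{\mathcal{M}}=\{x:\mathcal{M}(x)=\mathcal{M}\}$ over those $\mathcal{M}$ with $|\mathcal{M}|\ge (1-\gamma)M$. As $\mu(E)>0$, at least one of them, say $E_{\mathcal{M}_0}$, has positive measure. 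By construction $|\mathcal{M}_0|\ge (1-\gamma)M$ and $E_{\mathcal{M}_0}\subset \bigcap_{m\in\mathcal{M}_0} A_m$, whence $\mu\left(\bigcap_{m\in\mathcal{M}_0} A_m\right)\ge \mu(E_{\mathcal{M}_0})>0$; taking $\mathcal{M}=\mathcal{M}_0$ yields both required properties.

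There is essentially no hard step here. The only point requiring genuine care is the integrality observation in the second paragraph: it is precisely what upgrades the weak average bound into the strict inequality $c<(1-\gamma)M$ and thereby forces $\mu(E)>0$ rather than merely $\mu(E)\ge 0$. Everything else is routine bookkeeping with the finitely many measurable pieces determined by membership in the $A_i$.
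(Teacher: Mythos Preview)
Your proof is correct and follows essentially the same averaging argument as the paper: both integrate $f=\sum_{i=1}^M 1_{A_i}$, deduce that $W=\{x:f(x)\ge (1-\gamma)M\}$ has positive measure, and then use that $W$ is a finite union of sets of the form $\bigcap_{m\in\mathcal{M}}A_m$ with $|\mathcal{M}|\ge(1-\gamma)M$ to extract a good $\mathcal{M}$. Your explicit use of the integrality of $f$ to obtain the strict inequality $c<(1-\gamma)M$ spells out a step the paper leaves implicit, and your decomposition of $E$ into the level sets $E_{\mathcal{M}}$ is a harmless variant of the paper's covering by the intersections themselves.
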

	\begin{proof}Since \begin{align*}1-\gamma\le \frac{1}{M}\sum_{m=1}^M\mu(A_m)=\int_X\frac{1}{M}\sum_{i=1}^M1_{A_m}(x)d\mu(x).
		\end{align*}
it follows that the set $W=\{x\in X:\frac{1}{M}\sum_{i=1}^M1_{A_m}(x)\ge 1-\gamma\}$ has positive measure. Notice  that
		$$W=\bigcup_{\mathcal{M}\subset\{1,2,\cdots,M\}\atop |\mathcal{M}|\ge (1-\gamma)M}\bigcap_{m\in\mathcal{M}}A_m.$$
		Since the set $\{\mathcal{M}\subset\{1,2,\cdots,M\}:|\mathcal{M}|\ge (1-\gamma)M\}$ is finite, we can find a $\mathcal{M}\subset\{1,2,\cdots,M\}$ such that  $\mu(\cap_{m\in\mathcal{M}}A_m)>0$ and $|\mathcal{M}|\ge (1-\gamma)M$. This ends the proof of Lemma \ref{l-7}.
	\end{proof}
	For $\mu\in\mathcal{M}(X,G)$, put
	$$\mathcal{I}_\mu(G)=\{B\in\mathcal{B}_X^\mu:gB=B,\text{ for all } g\in G\}.$$
	The  Kronecker $\sigma$-algebra of $(X,\mu,G)$ is defined by
	$$\mathcal{K}_\mu(G)=\{B\in\mathcal{B}_X^\mu:\{g1_B:g\in G\}\text{ is precompact in }L^2(\mu)\}.$$
	The key to prove Theorem \ref{Thm-B} is the follwing lemma.
	\begin{lem}\label{l-2}For $I\in\mathbb{N}$ and $\mu_1,\mu_2,\cdots,\mu_I\in\mathcal{M}(X,G)$, one has
		$\mathcal{I}_{\mu_1\times\mu_2\times\cdots\times\mu_I}(G)\subset \mathcal{K}_{\mu_1}(G)\times\mathcal{K}_{\mu_2}(G)\times\cdots\times\mathcal{K}_{\mu_I}(G).$
	\end{lem}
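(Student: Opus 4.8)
The plan is to realize $\mathcal{I}_{\mu_1\times\cdots\times\mu_I}(G)$ through an ultrafilter-limit operator and to show that this operator lands inside the Hilbert tensor product of the individual Kronecker factors. Write $U_g^{(i)}$ for the Koopman operator of $(X,\mu_i,G)$ acting on $L^2(\mu_i)$ by $U_g^{(i)}f=f\circ g$, and recall the standard identification $\mathcal{H}_c^{(i)}=L^2(X,\mathcal{K}_{\mu_i}(G),\mu_i)$, where $\mathcal{H}_c^{(i)}=\{\varphi:\{U_g^{(i)}\varphi\}\text{ is precompact}\}$ is the compact part of the representation. On the product space one has $L^2(\mu_1\times\cdots\times\mu_I)=\bigotimes_{i=1}^I L^2(\mu_i)$, the diagonal Koopman operator is $U_g=\bigotimes_{i=1}^I U_g^{(i)}$, and a set $B$ lies in $\mathcal{I}_{\mu_1\times\cdots\times\mu_I}(G)$ precisely when $U_g1_B=1_B$ for all $g$. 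Thus it suffices to prove that every such $1_B$ is measurable with respect to $\mathcal{K}_{\mu_1}(G)\times\cdots\times\mathcal{K}_{\mu_I}(G)$, i.e.\ lies in $W:=\bigotimes_{i=1}^I \mathcal{H}_c^{(i)}=L^2(X^I,\mathcal{K}_{\mu_1}(G)\times\cdots\times\mathcal{K}_{\mu_I}(G),\mu_1\times\cdots\times\mu_I)$.

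First I would fix a minimal idempotent $\mathfrak{p}\in\beta G$, which exists since $\beta G$ is an Ellis semigroup, and define the contraction $T_{\mathfrak{p}}\varphi=\mathfrak{p}\text{-}\lim_{g}U_g\varphi$ as the weak limit along $\mathfrak{p}$; this is well defined because balls are weakly compact. The reason for insisting that $\mathfrak{p}$ be minimal is Theorem \ref{thm:weak mixing}: applied to each unitary representation $(U_g^{(i)})_g$ it yields the orthogonal splitting $L^2(\mu_i)=\mathcal{H}_c^{(i)}\oplus\mathcal{H}_{wm}^{(i)}$ with $\mathcal{H}_{wm}^{(i)}=\{\varphi:\mathfrak{p}\text{-}\lim_g U_g^{(i)}\varphi=0\}$.

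The heart of the argument is to evaluate $T_{\mathfrak{p}}$ on the product. I would decompose $\bigotimes_i\big(\mathcal{H}_c^{(i)}\oplus\mathcal{H}_{wm}^{(i)}\big)$ into the $2^I$ mutually orthogonal summands indexed by subsets $S\subseteq\{1,\dots,I\}$, the $i$-th slot being drawn from $\mathcal{H}_c^{(i)}$ when $i\notin S$ and from $\mathcal{H}_{wm}^{(i)}$ when $i\in S$. For a simple tensor $\bigotimes_i\varphi_i$ tested against a simple tensor $\bigotimes_i\psi_i$ one has $\langle U_g\bigotimes_i\varphi_i,\bigotimes_i\psi_i\rangle=\prod_i\langle U_g^{(i)}\varphi_i,\psi_i\rangle$. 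Since the ultrafilter limit is a multiplicative functional on bounded scalar nets, the $\mathfrak{p}$-limit of the right-hand side factors as $\prod_i\big(\mathfrak{p}\text{-}\lim_g\langle U_g^{(i)}\varphi_i,\psi_i\rangle\big)$; if even one slot has $\varphi_i\in\mathcal{H}_{wm}^{(i)}$, that factor vanishes, so by totality of simple tensors $T_{\mathfrak{p}}$ annihilates every summand with $S\neq\emptyset$, i.e.\ $T_{\mathfrak{p}}$ kills $W^\perp$. As $W$ is a closed, $U_g$-invariant, hence weakly closed subspace, $T_{\mathfrak{p}}$ also maps $W$ into $W$, so the range of $T_{\mathfrak{p}}$ is contained in $W$. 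To conclude, note that for $B\in\mathcal{I}_{\mu_1\times\cdots\times\mu_I}(G)$ the net $U_g1_B$ is constant, whence $T_{\mathfrak{p}}1_B=1_B$; thus $1_B$ lies in the range of $T_{\mathfrak{p}}$, hence in $W$, which gives $B\in\mathcal{K}_{\mu_1}(G)\times\cdots\times\mathcal{K}_{\mu_I}(G)$ modulo null sets.

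I expect the main obstacle to be the tensor-product computation of $T_{\mathfrak{p}}$: one must argue cleanly that the weak $\mathfrak{p}$-limit of $\bigotimes_i U_g^{(i)}$ respects the Bergelson splitting of each factor, and the decisive tool is the multiplicativity of ultrafilter limits on bounded nets combined with the density of simple tensors. A secondary point needing care, though standard, is the identification $\mathcal{H}_c^{(i)}=L^2(\mathcal{K}_{\mu_i}(G))$ together with the matching $\bigotimes_i L^2(\mathcal{K}_{\mu_i}(G))=L^2$ of the product $\sigma$-algebra, which is what converts the Hilbert-space inclusion into the claimed inclusion of $\sigma$-algebras.
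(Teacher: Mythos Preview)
Your argument is correct and takes a genuinely different route from the paper. The paper first proves the case $I=2$ by a Hilbert--Schmidt argument: given an invariant $F\in L^2(\mu_1\times\mu_2)$ it forms the integral operator $A:L^2(\mu_1)\to L^2(\mu_2)$ with kernel $F$, observes that $U_g$ commutes with $A^*A$, and uses the fact that the (finite-dimensional) eigenspaces of the compact self-adjoint $A^*A$ are therefore $G$-invariant, hence consist of almost periodic functions; a projection computation then shows $F$ lies in the tensor product of the two Kronecker factors. The general $I$ is deduced by applying the $I=2$ case once for each coordinate and intersecting. By contrast, you exploit Theorem~\ref{thm:weak mixing} directly: fixing a minimal idempotent $\mathfrak{p}$ you form the weak-limit contraction $T_{\mathfrak{p}}$ on the whole product, use the multiplicativity of ultrafilter limits on bounded scalars to see that $T_{\mathfrak{p}}$ annihilates every tensor summand touching some $\mathcal{H}_{wm}^{(i)}$, and conclude since $T_{\mathfrak{p}}$ fixes invariant functions. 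Your approach handles all $I$ simultaneously without the bootstrap step and is arguably more in the spirit of the paper's other appendix arguments, which already invoke Theorem~\ref{thm:weak mixing}; the paper's Hilbert--Schmidt route is more classical and avoids the $\beta G$ machinery, at the cost of the extra reduction step and the intersection-of-product-$\sigma$-algebras fact needed to pass from $I=2$ to general $I$.
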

	We will prove Lemma \ref{l-2} in Appendix A. Now we are going to prove Theorem \ref{Thm-B}. In fact,  Theorem \ref{Thm-B}
	immediately comes from Theorem \ref{thm-3} and Theorem \ref{th-3}.
	\begin{thm}\label{thm-3}Let $(X,G)$ be a minimal $G$-system. Then $$h_{top}^*(X,G)\ge\log \sum_{\mu\in\mathcal{M}^e(X,G)}k_\mu,$$\
		where $k_\mu$ is as in Theorem \ref{th-3}.\end{thm}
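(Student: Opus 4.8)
The plan is to bound $h_{top}^*(X,G)$ from below through Proposition \ref{p-1}, so that the whole theorem reduces to a statement about separated configurations. Since $\log\sum_{\mu\in\mathcal M^e(X,G)}k_\mu$ is the supremum of $\log\sum_{i=1}^I K_i$ taken over all finite families of distinct ergodic measures $\mu_1,\dots,\mu_I$ and integers $1\le K_i\le k_{\mu_i}$, it suffices to fix such a family, set $K=\sum_{i=1}^I K_i$ (assuming $K\ge 2$, the case $K=1$ being trivial), and prove that for every $\gamma\in(0,1)$ the hypothesis of Proposition \ref{p-1} holds for a suitable $\epsilon>0$; letting $\gamma\to0$ and taking the supremum over families then yields the claim. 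By Theorem \ref{th-3} I disintegrate each $\mu_i$ over its Kronecker algebra, $\mu_i=\int_X\mu_{i,x}\,d\mu_i(x)$, with $|\mathrm{supp}\,\mu_{i,x}|=k_{\mu_i}\ge K_i$ for $\mu_i$-a.e.\ $x$.

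Next I would manufacture a supply of separated configurations from a single $G$-invariant measure. Form the diagonal $G$-invariant self-joining $\lambda=\prod_{i=1}^I\mu_i^{(K_i)}$ on $X^K$, where $\mu_i^{(K_i)}=\int_X(\mu_{i,x})^{K_i}\,d\mu_i(x)$ is the $K_i$-fold relatively independent self-product of $\mu_i$ over $\mathcal K_{\mu_i}(G)$. Within the $i$-th block, $|\mathrm{supp}\,\mu_{i,x}|\ge K_i$ lets Lemma \ref{lem} produce a positive-measure set of $2\epsilon$-separated $K_i$-tuples; across distinct blocks the mutual singularity of $\mu_i$ and $\mu_j$ (disjoint sets of full measure) forces the coordinates to be distinct almost everywhere. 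Combining these and letting $\epsilon\downarrow 0$, I get $\lambda\bigl(X^K\setminus X^K_2(2\epsilon)\bigr)>0$ for some fixed $\epsilon>0$, so that $\lambda$ charges the set $A$ of fully $2\epsilon$-separated $K$-tuples. Transporting this supply to the fibres of the maximal equicontinuous factor $(H,\nu)$ shared by all the $\mu_i$, and using Lemma \ref{lem-1} to extract an $\epsilon$-separated transversal, I obtain a set $W\subset H$ with $\nu(W)>1-\gamma$ over whose fibres the distinct measures jointly place at least $K$ separated, and potentially synchronizable, points.

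The core reformulation is then the following sufficient condition, which I expect to be the clean target. Given open $U_1,\dots,U_M$, minimality yields a nonempty open $U$ and $g_1,\dots,g_M\in G$ with $g_mU\subset U_m$. It would suffice to find a single $s\in G$ and, applying Lemma \ref{l-7} to the sets $A_m=\{y\in H:g_m y\in W\}$, a subset $\mathcal M$ with $|\mathcal M|\ge(1-\gamma)M$ together with a common base point, so that for each $m\in\mathcal M$ there is a $2\epsilon$-separated tuple $q^{(m)}=(q^{(m)}_1,\dots,q^{(m)}_K)$ with $s\,q^{(m)}_k\in U_m$ for all $k$; for then $x_{m,k}:=s\,q^{(m)}_k\in U_m$ and the single element $g:=s^{-1}$ satisfies $d(g x_{m,k},g x_{m,k'})=d(q^{(m)}_k,q^{(m)}_{k'})>\epsilon$, i.e.\ exactly conditions (1)–(2) of Proposition \ref{p-1}. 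In the abelian Theorem \ref{thm-5} such an $s$ is obtained by synchronizing the common fibre into $U$ and exploiting the identity $h^{-1}g_m h=g_m$; since this commutation fails for non-abelian $G$, producing one $s$ that simultaneously synchronizes the separated fibre tuples into a $(1-\gamma)$-fraction of the $U_m$ is precisely the crux.

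The synchronization itself is the main obstacle, and is where Lemma \ref{l-2} and Bergelson's Theorem \ref{thm:weak mixing} enter. Because $G$ need be neither amenable nor abelian, I cannot use Birkhoff averages nor the regional-proximality upgrade of Theorem \ref{rp} (which requires a dense set of minimal points in $X\times X$). Instead I would fix a minimal idempotent $\mathfrak p\in\beta G$: by Theorem \ref{thm:weak mixing} it annihilates the weak-mixing part of each $L^2(\mu_i)$ and acts isometrically on the compact part, which at the dynamical level brings the points of a common Kronecker fibre together and hence lets group elements approximating $\mathfrak p$ synchronize separated tuples into any prescribed $U_m$. The delicate point is to do this \emph{simultaneously across} $\mu_1,\dots,\mu_I$ with a \emph{single} element $s$; here Lemma \ref{l-2}, namely $\mathcal I_{\mu_1\times\cdots\times\mu_I}(G)\subset\mathcal K_{\mu_1}(G)\times\cdots\times\mathcal K_{\mu_I}(G)$, is exactly what guarantees that the product system carries no invariant structure beyond the product of the individual Kronecker factors, so that the idempotent's action decouples over the blocks and one synchronizer serves them all. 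Carrying this out uniformly enough to retain the density $|\mathcal M|\ge(1-\gamma)M$ is the step I expect to require the most care; once it is in place, Proposition \ref{p-1} gives $h_{top}^*(X,G)\ge\log((1-\gamma)K)$, and letting $\gamma\to0$ and taking the supremum over finite families completes the proof.
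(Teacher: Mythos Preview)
Your overall architecture is right: reduce to Proposition \ref{p-1}, disintegrate each $\mu_i$ over its Kronecker algebra, and use Lemma \ref{l-2} to control the invariant $\sigma$-algebra of a large product. But the synchronization step, which you correctly identify as the crux, does not go through as written, and the route you propose diverges from what actually works.

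First, routing the argument through the maximal equicontinuous factor $(H,\nu)$ is a genuine mistake for general $G$. The factor $H$ can be trivial even when $(X,G)$ is minimal and non-trivial (exactly the situation of the remark following Theorem \ref{Thm-B}), so the set $W\subset H$ carries no information and the whole transport step collapses. The paper's proof never mentions $H$; it works entirely inside product systems on $X$.

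Second, invoking a minimal idempotent $\mathfrak p$ via Theorem \ref{thm:weak mixing} to ``synchronize'' separated tuples is not a proof. That theorem is a statement about weak limits in $L^2$; it does not give you a single group element $s$ that moves specific topological points into specific open sets. In the paper, Theorem \ref{thm:weak mixing} is used only in Appendix B to establish $h_\mu^*(G,\alpha)=H_\mu(\alpha|\mathcal K_\mu)$, not in the proof of Theorem \ref{thm-3}.

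The mechanism the paper actually uses is this. Set $\overline\mu=\mu_1^{k_1}\times\cdots\times\mu_I^{k_I}$ on $\overline X=X^{K}$ (the \emph{plain} product, not the relatively independent one) and consider $\overline\mu^{\,M}$ on $\overline X^M$. Lemma \ref{l-2} gives $\mathcal I_{\overline\mu^M}(G)\subset(\mathcal K_{\mu_1}^{k_1}\times\cdots\times\mathcal K_{\mu_I}^{k_I})^M$, and then \eqref{e-19-01} shows that every ergodic component $\nu$ of $\overline\mu^{\,M}$ is an average of products $\overline\mu_{\overline y_1}\times\cdots\times\overline\mu_{\overline y_M}$ of fibre measures. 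A uniform estimate (Claim \ref{cl-6}) forces each $\overline X$-marginal of any such $\nu$ to give $W_\epsilon$ mass at least $1-\gamma$; Lemma \ref{l-7} applied to $\nu$ then yields $\mathcal M$ with $|\mathcal M|\ge(1-\gamma)M$ and $\nu(P_1\times\cdots\times P_M)>0$, where $P_m$ is the separated set on the $m$-th block. Since minimality makes $\mathrm{supp}\,\overline\mu^{\,M}$ all of $X^{MK}$, one can pick $\nu$ so that it also charges $U_1^K\times\cdots\times U_M^K$. Now the synchronizing element comes for free: $\nu$ is ergodic, both sets have positive $\nu$-measure, hence some $h\in G$ satisfies $h(U_1^K\times\cdots\times U_M^K)\cap(P_1\times\cdots\times P_M)\neq\emptyset$, and unpacking this intersection gives exactly the points required by Proposition \ref{p-1}. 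No equicontinuous factor, no idempotent, no commutativity; just ergodicity of a well-chosen component of a big product measure.
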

	\begin{proof} Fix $I\in\mathbb{N}$ with $I\le |\mathcal{M}^e(X,G)|$. Assume that $\mu_1,\cdots,\mu_I\in\mathcal{M}^e(X,G)$ are different ergodic measures of $(X,G)$. For each $i$, let $\mu_i=\int_X\mu_{i,x}\mu_i(x)$ be the disintegration of $\mu_i$ with respect to $\mathcal{K}_{\mu_i}(G) $. Let $k_i\in\mathbb{N}$ with $k_i\le k_{\mu_i}$. To prove Theorem \ref{thm-3}, it is enough to show that $h_{top}^*(X,G)\ge\log \sum_{i=1}^Ik_i$.
		
		Put 
		$\overline{X}=X^{\sum_{i=1}^Ik_i}$ and 
		$\overline{\mu}=\mu_1^{k_1}\times\mu_2^{k_2}\times\cdots\times\mu_I^{k_I}.$
		For $\overline{x}\in \overline{X}$, we note 
		$$\overline{x}=(x_1,x_2,\cdots,x_{\sum_{i=1}^Ik_i})\text{ or }\overline{x}=(x_{1,1},\cdots,x_{1,k_1},\cdots,x_{I,k_I})$$
		since there is no confusion. 	For $\overline{x}\in \overline{X}$, we put
		$$\overline{\mu}_{\overline{x}}=\mu_{1,x_{1,1}}\times\cdots\times\mu_{1,x_{1,k_1}}\times\cdots\times\mu_{I,x_{I,{k_I}}}.$$
		For $\epsilon>0$, put
		\begin{align}\label{e-32}W_{\epsilon}=\{\overline{x}\in \overline{X}:\overline{\mu}_{\overline{x}}(X^{\sum_{i=1}^Ik_i}\setminus X^{\sum_{i=1}^Ik_i}_2(\epsilon))>0\}.
		\end{align}
		For $j\in\mathbb{N}$, we let $i(j)$ be the number $i$ such that  $$m\sum_{s=1}^Ik_s+\sum_{s=1}^{i-1}k_s+1\le j\le m\sum_{s=1}^Ik_s+ \sum_{s=1}^ik_s\text{ for some }m\in\mathbb{N}\cup\{0\}.$$
		
		Let $\mathcal{E}$ be the set of $\tau\in \mathcal{M}(\overline{X},G)$ with marginal $\mu_{i(j)}$ on the $j$-th coordinate for $j\in\mathbb{N}$. 
		For $\tau\in \mathcal{E}$ we put 
		$$\tau_\mathcal{K}=\int_{\overline{X}}\overline{\mu}_{\overline{x}}d\tau(\overline{x}).$$ Here $\tau_\mathcal{K}$ is well defined for $\tau\in\mathcal{E}$ since the map $x\to \mu_{i,x}$ is measurable for each $i$. We have the following claim.
		\begin{cl}\label{cl-6}$\lim_{\epsilon\to0}\inf_{\tau\in\mathcal{E}}\tau_{\mathcal{K}}(W_\epsilon)=1.$
		\end{cl}
		The proof of Claim \ref{cl-6} will be presented later. Now
		fix $0<\gamma<1$ and $K=\sum_{i=1}^Ik_i$. By Claim \ref{cl-6}, we can fix $\epsilon>0$ with $\inf_{\tau\in\mathcal{E}}\tau_{\mathcal{K}}(W_\epsilon)\ge 1-\gamma.$ We are going to verify the conditions in Proposition \ref{p-1}. 
		Fix $M\in\mathbb{N}$ and  nonempty open subsets $U_1,U_2,\cdots, U_M$ of $X$. 
		Let 
		$$\overline{\mu}^M=\int_{\overline{X}^M}\overline{\mu}_{\overline{x}_1,\overline{x}_2,\cdots,\overline{x}_M}d\overline{\mu}^M(\overline{x}_1,\overline{x}_2,\cdots,\overline{x}_M)$$ be the disintegration of $\overline{\mu}^M$ with respect to $\mathcal{I}_{\overline{\mu}^M}(G)$ as well as the ergodic decomposition of $\overline \mu^M$. By Lemma \ref{l-2}, $$\mathcal{I}_{\overline{\mu}^M}(G)\subset (\mathcal{K}_{\mu_1}(G)^{k_1}\times\mathcal{K}_{\mu_2}(G)^{k_2}\times\cdots\times\mathcal{K}_{\mu_I}(G)^{k_I})^M.$$
		Then by \eqref{e-19-01}, for $\overline{\mu}^M$-a.e. $(\overline{x}_1,\overline{x}_2,\cdots,\overline{x}_M)\in \overline{X}^M$, one has 
		$$\overline{\mu}_{\overline{x}_1,\overline{x}_2,\cdots,\overline{x}_M}=\int_{\overline{X}^M}\overline{\mu}_{\overline{y}_1}\times\cdots\times \overline{\mu}_{\overline{y}_M}d\overline{\mu}_{\overline{x}_1,\overline{x}_2,\cdots,\overline{x}_M}(\overline{y}_1,\overline{y}_2,\cdots,\overline{y}_M).$$
		Since $(X,G)$ is minimal, $\text{supp}\overline{\mu}^M=X^{M(\sum_{i=1}^Ik_i)}$. One can choose $$\nu:=\overline{\mu}_{\overline{x}_1,\overline{x}_2,\cdots,\overline{x}_M}\in \mathcal{M}^e(\overline{X}^M,G)\text{ for some }(\overline{x}_1,\overline{x}_2,\cdots,\overline{x}_M)\in \overline{X}^M$$ such that  $$\nu( U_1^{\sum_{i=1}^Ik_i}\times U_2^{\sum_{i=1}^Ik_i}\times\cdots\times U_M^{\sum_{i=1}^Ik_i})>0,$$
		\begin{align}\label{e-29} \nu=\int_{\overline{X}^M}\overline{\mu}_{\overline{y}_1}\times\cdots\times \overline{\mu}_{\overline{y}_M}d\nu(\overline{y}_1,\overline{y}_2,\cdots,\overline{y}_M),\end{align}	
		and the projection of $\nu$ on the $j$-th coordinate is $\mu_{i(j)}$.

		For $m\in\{1,2,\cdots,M\}$, put
		$$A_m=\overline{X}^{m-1}\times W_\epsilon\times \overline{X}^{M-m}.$$
		Let $p_m$ be the projection of $\overline{X}^M$ on the $m$-th $\overline{X}$. Then by \eqref{e-29},
		$$p_m(\nu)=\int_{\overline{X}}\overline{\mu}_{\overline{y}}d\nu(\overline{y}).$$
		By Claim \ref{cl-6},
		$$\nu(A_m)=p_m(\nu)(W_\epsilon)>1-\gamma,\text{ for all } m\in\{1,2,\cdots,M\}.$$
		By Lemma \ref{l-7}, there is $\mathcal{M}\subset \{1,2,\cdots,M\}$ such that 
		\begin{itemize}
			\item[(1)] $|\mathcal{M}|\ge (1-\gamma)M$;
			\item[(2)] $\nu(\cap_{m\in\mathcal{M}}A_m)>0$.
		\end{itemize}
		Put $P_m=X^{\sum_{i=1}^Ik_i}$ if $m\notin \mathcal{M}$ and $P_m=X^{\sum_{i=1}^Ik_i}\setminus X^{\sum_{i=1}^Ik_i}_2(\epsilon)$ if $m\in \mathcal{M}$. Then for $(\overline{y}_1,\overline{y}_2,\cdots,\overline{y}_M)\in \cap_{m\in\mathcal{M}}A_m$ one has by \eqref{e-32} that
		$$\overline{\mu}_{\overline{y}_1}\times\cdots\times \overline{\mu}_{\overline{y}_M}(P_1\times P_2\times\cdots\times P_M)=\Pi_{m=1}^M\overline{\mu}_{\overline{y}_m}(P_m)>0.$$
		Since  $\nu(\cap_{m\in\mathcal{M}}A_m)>0$, one has
		\begin{align*}
			&\nu(P_1\times P_2\times\cdots\times P_M)\\&\ge \int_{\cap_{m\in\mathcal{M}}A_m}\overline{\mu}_{\overline{y}_1}\times\cdots\times \overline{\mu}_{\overline{y}_M}(P_1\times P_2\times\cdots\times P_M)d\nu(\overline{y}_1,\cdots,\overline{y}_M)\\
			&>0.
		\end{align*}
		Since $\nu$ is ergodic and  $\nu( U_1^{\sum_{i=1}^Ik_i}\times U_2^{\sum_{i=1}^Ik_i}\times\cdots\times U_M^{\sum_{i=1}^Ik_i})>0,$
		there is $h\in G$ such that 
		$$(h( U_1^{\sum_{i=1}^Ik_i}\times U_2^{\sum_{i=1}^Ik_i}\times\cdots\times U_M^{\sum_{i=1}^Ik_i}))\cap(P_1\times P_2\times\cdots\times P_M)\neq\emptyset.$$
		We can find  $(\overline{x}_1,\overline{x}_2,\cdots,\overline{x}_M)\in U_1^{\sum_{i=1}^Ik_i}\times U_2^{\sum_{i=1}^Ik_i}\times\cdots\times U_M^{\sum_{i=1}^Ik_i}$ such that
		$$h(\overline{x}_1,\overline{x}_2,\cdots,\overline{x}_M)\in P_1\times P_2\times\cdots\times P_M.$$
		If  we write $\overline{x}_m=(x_{mk})_{k=1,2,\cdots,\sum_{i=1}^Ik_i}$ for $m\in \mathcal{M}$, we have
		$$d(hx_{mk},hx_{mk'})>\epsilon,\text{ for } m\in\mathcal{M}, 1\le k<k'\le \sum_{i=1}^Ik_i.$$
		Since each $x_{mk}\in U_m ,\text{ for } m\in\mathcal{M}, 1\le k\le \sum_{i=1}^Ik_i$, the conditions in Proposition \ref{p-1} are valid. Then
		by Proposition \ref{p-1}, $h^*_{top}(X,G)\ge \log ((1-\gamma)\sum_{i=1}^Ik_i)$. Let $\gamma\to 0$. One has $h^*_{top}(X,G)\ge \log\sum_{i=1}^Ik_i$. This ends the proof of Theorem \ref{thm-3} by the beginning argument.
	\end{proof}
	Now we prove Claim \ref{cl-6}.
	\begin{proof}[Proof of Claim \ref{cl-6}] Since $\mu_1,\mu_2,\cdots,\mu_I$ are different ergodic measures of $(X,G)$,
		we can find $\epsilon_0>0$ and  $r_\epsilon\to 0$ as $\epsilon\to0$ and nonempty compact subsets $V_{i,\epsilon}$ of $X$ for $i\in\{1,2,\cdots,I\},\epsilon\in(0,\epsilon_0)$ such that
		\begin{align}\label{e-30} d(V_{i,\epsilon},V_{i',\epsilon})>\epsilon \text{ for } 1\le i< i'\le I\text{ and }\epsilon\in(0,\epsilon_0),\end{align}
		and
		\begin{align*}\mu_{i}(V_{i,\epsilon})>1-r_{\epsilon}\text{ for }1\le i\le I\text{ and }\epsilon\in(0,\epsilon_0). \end{align*}
		Then 
		\begin{align}\label{e-31}\lim_{\epsilon\to 0}\mu_{i}(V_{i,\epsilon})=1\text{ for }1\le i\le I. \end{align}
		
		For $i\in\{1,2,\cdots,I\}$ with $k_i=1$ and $\epsilon\in(0,\epsilon_0),\delta>0$, put
		\begin{align}\label{e-8}V_{i,\epsilon,\delta}= V_{i,\epsilon}\cap\{x\in X:\mu_{i,x}(V_{i,\epsilon})>0\}.\end{align}
		Since $\mu_{i,x}( V_{i,\epsilon})>0$ for $\mu_i$-a.e. $x\in  V_{i,\epsilon}$, one has
		\begin{align}\label{e-9}\lim_{\delta\to 0}\liminf_{\epsilon\to 0}\mu_i(V_{i,\epsilon,\delta})=\lim_{\epsilon\to 0}\mu_i(V_{i,\epsilon})=1.\end{align}
		
		For $i\in\{1,2,\cdots,I\}$ with $k_{i}\ge2$ and $\epsilon\in(0,\epsilon_0),0<\delta<1$, put
		\begin{align}\label{e-1}
			W_{i,\epsilon,\delta}=\{x\in X:\mu_{i,x}^{k_i}(X^{k_i}\setminus X^{k_i}_2(2\epsilon))>1-(1-\delta)^{k_i}\}.
		\end{align}
		Since $|\text{supp}\mu_{i,x}|\ge k_i>1$ for $\mu_i$-a.e. $x\in X$, one has by Lemma \ref{lem} that
		\begin{align}\label{eq-11111}\lim_{\delta\to 0}\lim_{\epsilon\to 0}\mu_i(	W_{i,\epsilon,\delta})=1.
		\end{align}
		Put
		\begin{align}\label{e-111}
			V'_{i,\epsilon,\delta}=\{x\in X:\mu_{i,x}(V_{i,\epsilon})>\delta\}.
		\end{align}
		and 
		\begin{align}\label{e-3}
			V_{i,\epsilon,\delta}=	V'_{i,\epsilon,\delta}\cap W_{i,\epsilon,\delta} .
		\end{align}
		
		Notice that
		\begin{align*}
			\mu_i(V_{i,\epsilon})&=\int_X \mu_{i,x}(V_{i,\epsilon})d\mu_i(x)\\
			&=\int_{V'_{i,\epsilon,\delta}} \mu_{i,x}(V_{i,\epsilon})d\mu_i(x)+\int_{X\setminus V'_{i,\epsilon,\delta}} \mu_{i,x}(V_{i,\epsilon})d\mu_i(x)\\
			&\le \mu_i(V'_{i,\epsilon,\delta})+\delta\mu_i(X\setminus V'_{i,\epsilon,\delta})\\
			&=1-(1-\delta)\mu_i(X\setminus V'_{i,\epsilon,\delta}).
		\end{align*}
		This implies 
		$$\mu_i(X\setminus V'_{i,\epsilon,\delta})\le \frac{1}{1-\delta}(1-\mu_i(V_{i,\epsilon}))\le \frac{1}{1-\delta}r_\epsilon.$$
		Hence by \eqref{e-3},
		\begin{align*}
			\mu_i(V_{i,\epsilon,\delta})\ge \mu_i(W_{i,\epsilon,\delta})-\mu_i(X\setminus V'_{i,\epsilon,\delta})\ge  \mu_i(W_{i,\epsilon,\delta})-\frac{1}{1-\delta}r_\epsilon.
		\end{align*}
		Since $r_\epsilon\to 0$ as $\epsilon\to0$, one has  by \eqref{eq-11111} that
		\begin{align}\label{eq-1}\lim_{\delta\to 0}\liminf_{\epsilon\to 0}\mu_i(V_{i,\epsilon,\delta})=1.\end{align}

		Now we show that $V_{1,\epsilon,\delta}^{k_1}\times  V_{2,\epsilon,\delta}^{k_2}\times \cdots\times  V_{I,\epsilon,\delta}^{k_I}\subset W_\epsilon$. Fix a $$\overline{x}=(x_{1,1},\cdots,x_{1,k_1},\cdots,x_{I,k_I})\in V_{1,\epsilon,\delta}^{k_1}\times  V_{2,\epsilon,\delta}^{k_2}\times \cdots\times  V_{I,\epsilon,\delta}^{k_I}.$$ For $i\in\{1,2,\cdots,I\}$ with $k_i\ge2$ and $k\in\{1,2,\cdots,k_i\}$, by \eqref{e-111}, \eqref{e-3} and \eqref{e-1}, one has that $ V_{i,\epsilon}\cap \text{supp}\mu_{i,x_{i,k}}$ contains at least $k_i$ points such that  the distance between them are larger than $2\epsilon$. By Lemma \ref{lem-1}, for $i\in\{1,2,\cdots,I\}$ with $k_i\ge2$ one can choose $y_{i,k}\in  V_{i,\epsilon}\cap \text{supp}\mu_{i,x_{i,k}}$  for $1\le k\le k_i$ such that  the distance between them are larger than $\epsilon$. If $k_i=1$, by \eqref{e-8}, there is $y_{i,k_i} \in V_{i,\epsilon}\cap \text{supp}\mu_{i,x_{i,k_i}}$. Combining with \eqref{e-30}, the distance between $y_{1,1},\cdots,y_{1,k_1},\cdots,y_{I,k_I}$ are larger than $\epsilon$. This implies $(y_{1,1},\cdots,y_{1,k_1},\cdots,y_{I,k_I})\in \text{supp}\overline{\mu}_{\overline{x}}\cap (X^{\sum_{i=1}^Ik_i}\setminus X^{\sum_{i=1}^Ik_i}_2(\epsilon))$. Since $(X^{\sum_{i=1}^Ik_i}\setminus X^{\sum_{i=1}^Ik_i}_2(\epsilon))$ is open, one has $\overline{\mu}_{\overline{x}}((X^{\sum_{i=1}^Ik_i}\setminus X^{\sum_{i=1}^Ik_i}_2(\epsilon)))>0$ and then $\overline{x}\in W_\epsilon$. Therefore  $V_{1,\epsilon,\delta}^{k_1}\times  V_{2,\epsilon,\delta}^{k_2}\times \cdots\times  V_{I,\epsilon,\delta}^{k_I}\subset W_\epsilon$.

		By  \eqref{e-9} and \eqref{eq-1}, we can fix $\epsilon_n\to 0,\delta_n\to 0$ as $n\to\infty$ such that $$\lim_{n\to\infty}\mu_i(V_{i,\epsilon_n,\delta_n})=1\text{ for }i\in\{1,2,\cdots,I\}.$$
		Then
		\begin{align*}&\lim_{\epsilon\to0}\inf_{\tau\in\mathcal{E}}\tau_{\mathcal{K}}(W_{\epsilon})\\
			&\ge \liminf_{n\to\infty}\inf_{\tau\in\mathcal{E}}\tau_{\mathcal{K}}((V_{1,\epsilon_n,\delta_n})^{k_1}\times(V_{2,\epsilon_n,\delta_n})^{k_2}\times\cdots\times(V_{I,\epsilon_n,\delta_n})^{k_I})\\
			&\ge \liminf_{n\to\infty}\inf_{\tau\in\mathcal{E}}\left(\tau_{\mathcal{K}}(\overline{X})-\sum_{j=1}^{\sum_{i=1}^Ik_i}\tau_{\mathcal{K}}(X^{j-1}\times (X\setminus V_{i(j),\epsilon_n,\delta_n})\times X^{\sum_{i=1}^Ik_i-j})\right)\\
			&=\liminf_{n\to\infty}\left(1-\sum_{j=1}^{\sum_{i=1}^Ik_i}\mu_{i(j)}( X\setminus V_{i(j),\epsilon_n,\delta_n})\right)\\
			&=1.
		\end{align*}
		This ends the proof of Claim \ref{cl-6}.
	\end{proof}
	\section{Proof of Theorem \ref{thm-h} and  Theorem \ref{thm-B}}
	In this section, we prove Theorem \ref{thm-h} and  Theorem \ref{thm-B}. The proof of Theorem \ref{thm-h} is similar to the one of \cite[Theorem 3.1]{HLSY2}.
	We collect some known properties about  almost finite to one extensions.
	Let $\pi : (X, G)\to  (H, G)$ be an extension of $G$-system Let $\pi^{-1} : H \to 2^X, y 
	\to \pi^{-1}(y)$.
	Then it is easy to verify that $\pi^{-1}$ is a upper semicontinuous map, and the set $H_c$ of
	continuous points of $\pi^{-1}$ is a dense $G_\delta$ subset of $H$. Let
	$$H' =\overline{\{\pi^{-1}(y) : y \in H_c\}},$$
	where the closure is taken in $2^X$. It is obvious that $H'\subset 2^X$. Note that for each
	$A \in H'$, there is some $y\in H$ such that $A\subset  \pi^{-1}(y)$, and hence $A\to y$ define a map
	$\tau:H'\to H$.  In \cite{V}, Veech shown that $(H',G)$ is a minimal topological dynamical system and $\tau:(H',G)\to(H,G)$ is an almost one to one factor map.  The following lemma is from \cite[Corollary 2.19]{HLSY2}.
	\begin{lem}\label{N-1}
		Let $\pi:(X, G)\to (H, G)$ be an extension with $(H, G)$ being minimal. If $\pi$
		is an almost $N$ to one extension for some $N\in\mathbb{N}$, then the cardinality of each element of
		$H'$ is $N$, where $(H',G)$ is the minimal system defined above.
	\end{lem}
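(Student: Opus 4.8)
The plan is to work entirely in the hyperspace $2^X$ of nonempty closed subsets of $X$, equipped with the Hausdorff metric and the induced $G$-action $g\cdot A=gA$, and to study the cardinality function $c:2^X\to\mathbb{N}\cup\{\infty\}$, $c(A)=|A|$. Two features of $c$ will drive the argument. First, $c$ is $G$-invariant, since each $g\in G$ acts as a homeomorphism of $X$ and hence $|gA|=|A|$. Second, $c$ is lower semicontinuous for the Hausdorff metric, and this is the only genuinely metric input: if $A_n\to A$ in $2^X$ and $A$ contains $m$ distinct points $a_1,\dots,a_m$ with $3\rho=\min_{i\ne j}d(a_i,a_j)>0$, then for all large $n$ each $a_i$ lies within $\rho$ of a point of $A_n$, and these points are forced to be pairwise distinct; hence $\liminf_n c(A_n)\ge m$, so $c(A)\le\liminf_n c(A_n)$. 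Consequently $\{A:c(A)\le n\}$ is closed in $2^X$ for every $n$.

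First I would restrict $c$ to $H'$ and invoke minimality. By the theorem of Veech recalled above, $(H',G)$ is minimal. For each $n$ the set $\{A\in H':c(A)\le n\}$ is closed in $H'$ (by lower semicontinuity) and $G$-invariant (since $c$ is $G$-invariant and $H'$ is $G$-invariant), so minimality forces it to be either empty or all of $H'$. This nested family of sets, each empty or everything, forces $c$ to take a single value on $H'$: setting $K_0=\min\{n:\{A\in H':c(A)\le n\}=H'\}$ (with $K_0=\infty$ if no such $n$ exists), one gets $c\equiv K_0\in\mathbb{N}\cup\{\infty\}$, so every element of $H'$ has the same cardinality $K_0$.

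It remains to identify $K_0$ with $N$. Here I would use that $\pi$ is almost $N$ to one, so by definition $D_N=\{y\in H:|\pi^{-1}(y)|=N\}$ is residual in $H$, while the set $H_c$ of continuity points of $\pi^{-1}$ is also residual; hence $H_c\cap D_N$ is nonempty. Fixing $y_0\in H_c\cap D_N$, the set $\pi^{-1}(y_0)$ is one of the sets generating $H'$ (it belongs to $\{\pi^{-1}(y):y\in H_c\}\subset H'$), and $c(\pi^{-1}(y_0))=|\pi^{-1}(y_0)|=N$. Since $c\equiv K_0$ on $H'$, this gives $K_0=N$, in particular $K_0$ is finite, and therefore $|A|=N$ for every $A\in H'$, as required.

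The main obstacle I anticipate is the lower semicontinuity step together with the realization that it is exactly the regularity needed to combine with minimality: upper semicontinuity of $\pi^{-1}$, which is what is available a priori, only bounds the fibers from one side, whereas passing to the hyperspace and using lower semicontinuity of the cardinality function on the minimal system $(H',G)$ is what upgrades ``generic cardinality $N$'' to ``cardinality exactly $N$ on all of $H'$''. Everything else is bookkeeping: verifying the $G$-invariance of $c$ and of $H'$, and matching the constant value $K_0$ against a single well-chosen fiber.
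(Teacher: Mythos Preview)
Your proof is correct. The paper does not actually prove this lemma; it simply cites \cite[Corollary 2.19]{HLSY2}, so there is no in-paper argument to compare against. Your hyperspace approach---using lower semicontinuity of the cardinality map $c$ on $2^X$ together with minimality of $(H',G)$ to force $c$ to be constant on $H'$---is clean and complete.

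One small caveat concerns the phrase ``by definition $D_N$ is residual.'' The paper never defines ``almost $N$ to one'' explicitly, and in much of the literature (including \cite{HLSY2}) the definition is $N=\min_{y\in H}|\pi^{-1}(y)|$ rather than a residuality condition. With that definition your final step needs a one-line supplement: pick $y^*$ with $|\pi^{-1}(y^*)|=N$, take $y_n\in H_c$ with $y_n\to y^*$, pass to a Hausdorff-convergent subsequence $\pi^{-1}(y_n)\to A\in H'$; upper semicontinuity of $\pi^{-1}$ gives $A\subset\pi^{-1}(y^*)$, hence $K_0=|A|\le N$, while $N\le K_0$ is immediate since $N$ is the global minimum and $K_0=|\pi^{-1}(y)|$ for $y\in H_c$. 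This is entirely within your framework and you may want to include it to make the argument definition-independent.
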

	Now we prove  Theorem \ref{thm-h}.
	\begin{proof}[Proof of Theorem \ref{thm-h}]Let $\tau$ and $(H',G)$ be as above. Then $\tau$ is almost one to one and $|A|=K_2$ for $A\in  H'$. The two  maps $\tau^{-1}$ and $\pi^{-1}$ is upper semicontinuous and hence
		measurable. By Lusin's theorem, we may therefore choose a compact set $E_1$ of $H$ with $\nu(E_1)>0$ such that $\tau^{-1}|_{E_1}$ and $\pi^{-1}|_{E_1}$ are continuous and $|\pi^{-1}(h)|=K_1$ for $h\in E_1$.
		Let $E$ be the support of $\nu|_{E_1}$ and $h_0\in E$. Then $E$ is a compact subset of $E_1$ with $\nu(U\cap E)>0$ for all open neighborhood $U$ of $h_0$. 
		
		Now we fix $K\le \lceil\frac{K_1}{K_2}\rceil, K\in\mathbb{N},K\ge 2$ and $L=(K-1)K_2+1\le K_1$. Then there is distinct $x_1, x_2,\cdots,x_L\in\pi^{-1}(h_0)$. Since $(X,G)$ is minimal, $\cup_{A\in \tau^{-1}(h_0)}A=\pi^{-1}(h_0)$.
		We can find $L'\in\mathbb{N}$ and distinct $A_1,A_2,\cdots,A_{L'}\in \tau^{-1}(h_0)$ such that  $\{x_1,x_2,\cdots,x_L\}\subset\bigcup_{l=1}^{L'}A_l$.  Then 
		$$L'\ge  \lceil\frac{L}{K_2}\rceil=K\ge 2.$$
		We have the following claim.
		\begin{cl}\label{claim-1} $(A_1,A_2,\cdots,A_{L'})\in \text{IT}_{L'}(H',G)$. 
		\end{cl}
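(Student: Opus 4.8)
The plan is to prove the stronger statement that for \emph{every} choice of pairwise disjoint open neighbourhoods $\mathcal{U}_1,\dots,\mathcal{U}_{L'}$ of $A_1,\dots,A_{L'}$ in $H'$ (these exist since the $A_l$ are distinct points of the Hausdorff space $H'$), the tuple $(\mathcal{U}_1,\dots,\mathcal{U}_{L'})$ admits an \emph{infinite} independence set in $G$; by the definition of an IT-tuple this yields $(A_1,\dots,A_{L'})\in\mathrm{IT}_{L'}(H',G)$. To produce such a set I would transfer the problem into the compact group $E(H)$ and invoke Proposition \ref{p-eliss}. By Theorem \ref{thm-A1}, $E(H)$ is a compact metrizable topological group in which $G$ embeds as a dense subgroup, and $p\colon E(H)\to H$, $p(u)=u h_0$, is an open factor map; let $m$ be the Haar measure of $E(H)$, normalised so that $p_\ast m=\nu$.

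A single-valued selection is the key bookkeeping device. Since $\tau\colon H'\to H$ is almost one-to-one, the $G$-invariant residual set $H_c$ of continuity points of $\tau^{-1}$ consists of points with singleton fibre, and on it $\phi:=\tau^{-1}|_{H_c}\colon H_c\to H'$ is continuous and $G$-equivariant, i.e. $\phi(th)=t\phi(h)$. Set $\mathcal{G}:=p^{-1}(H_c)$; as $p$ is open and surjective and $H_c$ is a dense $G_\delta$, $\mathcal{G}$ is a $G$-invariant residual subset of $E(H)$, and $\phi\circ p$ is defined and $G$-equivariant on $\mathcal{G}$. For $l=1,\dots,L'$ put $S_l:=\{u\in\mathcal{G}:\phi(p(u))\in\mathcal{U}_l\}$; because $\phi(p(u))$ is a single point and the $\mathcal{U}_l$ are disjoint, the $S_l$ are pairwise disjoint.

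\textbf{The main obstacle} is to manufacture from the $S_l$ compact sets $V_1,\dots,V_{L'}\subset E(H)$ verifying the three hypotheses of Proposition \ref{p-eliss}. I would take $V_l$ to be a regularisation of $\overline{\mathrm{int}(S_l)}$, chosen (after slightly shrinking the $\mathcal{U}_l$ used to define $S_l$ relative to the target neighbourhoods) so that $\mathrm{cl}(\mathrm{int}(V_l))=V_l$, the interiors $\mathrm{int}(V_l)$ are pairwise disjoint (inherited from the disjointness of the $S_l$), and $\mathrm{int}(V_l)\cap\mathcal{G}\subseteq S_l$. The delicate point is condition (3), $m(\bigcap_l V_l)>0$, and here is exactly where $h_0\in E=\mathrm{supp}(\nu|_{E_1})$ enters. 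By continuity of $\tau^{-1}|_{E_1}$ and the fact that $A_1,\dots,A_{L'}$ are distinct elements of $\tau^{-1}(h_0)$, there is a neighbourhood $O$ of $h_0$ over which at least $L'$ continuous single-valued branches of $\tau^{-1}$ persist, one accumulating on each $A_l$; moreover $\nu(E_1\cap O)>0$ since $h_0\in\mathrm{supp}(\nu|_{E_1})$. Every $h\in E_1\cap O$ is thus a branch point at which all $L'$ branches accumulate, so every point of $p^{-1}(E_1\cap O)$ lies in $\overline{\mathrm{int}(S_l)}=V_l$ for each $l$; hence $p^{-1}(E_1\cap O)\subseteq\bigcap_l V_l$ and $m(\bigcap_l V_l)\ge\nu(E_1\cap O)>0$. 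This construction is the analogue of the one in \cite[Theorem 3.1]{HLSY2}, and it is where the verification is genuinely technical.

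Finally I would apply Proposition \ref{p-eliss} to $E(H)$, the dense subgroup $G$, the residual set $\mathcal{G}$ and the sets $V_1,\dots,V_{L'}$: it yields an infinite set $I\subset G$ such that for every $a\in\{1,\dots,L'\}^I$ there is $u_a\in\mathcal{G}$ with $tu_a\in\mathrm{int}(V_{a_t})$ for all $t\in I$. Since $\mathcal{G}$ is $G$-invariant, $tu_a\in\mathcal{G}\cap\mathrm{int}(V_{a_t})\subseteq S_{a_t}$, whence $\phi(p(tu_a))\in\mathcal{U}_{a_t}$; by equivariance $\phi(p(tu_a))=t\,\phi(p(u_a))$. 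Writing $B_a:=\phi(p(u_a))\in H'$ we obtain $tB_a\in\mathcal{U}_{a_t}$ for all $t\in I$. Consequently, for any finite $F\subset I$ and any $s\colon F\to\{1,\dots,L'\}$, extending $s$ to a colouring of $I$ and taking the corresponding point $B$ shows $B\in\bigcap_{t\in F}t^{-1}\mathcal{U}_{s(t)}\neq\emptyset$. Hence $I$ is an infinite independence set for $(\mathcal{U}_1,\dots,\mathcal{U}_{L'})$, and since these neighbourhoods were arbitrary, $(A_1,\dots,A_{L'})\in\mathrm{IT}_{L'}(H',G)$, which proves Claim \ref{claim-1}.
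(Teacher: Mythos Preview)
Your strategy coincides with the paper's: pass to the compact group $E(H)$, build compact sets there, apply Proposition~\ref{p-eliss}, and lift back to $H'$. Your idea of feeding the residual set $\mathcal G=p^{-1}(H_c)$ into Proposition~\ref{p-eliss} so that the output point automatically has a singleton $\tau$-fibre is a legitimate variant; the paper instead first obtains a nonempty open set $\bigcap_{n\le N} g_n^{-1}\mathrm{int}(\tau(U_{a_n}))$ in $H$ and then picks a singleton-fibre point inside it by density. Both routes work.

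The gap is in your construction of the $V_l$. You define $S_l\subset\mathcal G$ and set $V_l$ to be ``a regularisation of $\overline{\mathrm{int}(S_l)}$''. But $\mathcal G$ is only a dense $G_\delta$ in $E(H)$ and may have empty interior, so $\mathrm{int}(S_l)$ taken in $E(H)$ can be empty; and even interpreting you as meaning $V_l=\overline{W_l}$ for some open $W_l$ with $W_l\cap\mathcal G=S_l$, you have not verified condition~(1) of Proposition~\ref{p-eliss}, nor does your ``branches accumulate'' argument for~(3) connect to whatever $V_l$ actually is. These are exactly the points where real work is needed.

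The paper avoids this by a cleaner bookkeeping: it chooses a \emph{concrete} regular closed neighbourhood $U_l$ of $A_l$ in $H'$ (the closure of a Hausdorff ball), pushes it down via $\tau$, and sets $V_l=\pi_e^{-1}(\tau(U_l))$. The key fact---which your sketch omits---is that for an almost one-to-one factor map $\tau$ between minimal systems, the image $\tau(U_l)$ of a regular closed set is again regular closed and the interiors $\mathrm{int}(\tau(U_l))$ remain pairwise disjoint; since $\pi_e$ is open, these properties transfer to the $V_l$, giving~(1) and~(2) of Proposition~\ref{p-eliss} immediately. Condition~(3) then follows from $\nu\bigl(\bigcap_l\tau(U_l)\bigr)\ge\nu(E\cap U)>0$, which is the continuity-of-$\tau^{-1}|_E$ argument you describe, but carried out in $H$ rather than in $E(H)$. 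Rewriting your $V_l$ this way would close the gap.
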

		\begin{proof}[Proof of Claim \ref{claim-1}]Put
			$$r:=\min_{x,x'\in \bigcup_{l=1}^{L'}A_l\atop x\neq x'}d(x,x')>0.$$
			For each $l$ and $\epsilon\in(0,\frac{r}{3})$,
			if $A_l=\{x_1,x_2,\cdots,x_{K_2}\},$ then  define 
			$$U_l=\text{cl}(\{\{y_1,y_2,\cdots,y_{K_2}\}\in H':d(x_k, y_k)<\epsilon \text{ for }k\in\{1,2,\cdots,K_2\}\}).$$
			It is clear that  $U_l$ is  a closed neighborhood of $A_l$. To show  $(A_1,A_2,\cdots,A_{L'})\in \text{IT}_{L'}(H',G)$. 
			It is enough to show that the tuple $(U_1,U_2,\cdots ,U_{L'})$ has an infinite independence set.
			
			Since $\epsilon\in(0,\frac{r}{3})$, $U_l\cap U_{l'}=\emptyset$ for $1\le l<l'\le L'$. Since $\tau$ is almost one to one, $\text{cl}(\text{int}(\tau(U_l)))=\tau(U_l)$ for $1\le l\le L'$ and $\text{int}(\tau(U_l))\cap \text{int}(\tau(U_{l'}))=\emptyset$ for $1\le l<l'\le L'$.  By the continuity of $\tau^{-1}|_E$, there is a neighborhood $U$ of $h_0$ such that for $h\in U\cap E$, $\tau^{-1}(h)\cap U_l\neq\emptyset$ for $l\in\{1,2,\cdots,L'\}$. One has
			$\nu(\bigcap_{l=1}^{L'}\tau(U_l))\ge\nu(E\cap U)>0$.
			
			Let $E(H)$ be the Ellis semigroup of $(H,G)$. Then by Theorem \ref{thm-A1}, $(H, G )$ is a factor of $(E(H), G)$, where the factor map $\pi_e$ is given by
			$\pi_e: E(H)\to  H, t\to  th$ for some fixed $h\in H$. In particular, $\pi_e$ is open. Let $\tilde\nu$ be the Haar measure on $E(H)$. Then $\pi_e(\tilde{\nu})=\nu$. 
			
			$$\xymatrix{
				&~ &(X,G) \ar[d]_{\pi} & (H',G)  \ar[dl]_{\tau}              \\
				&(E(H),G,\tilde\nu )\ar[r]^{\pi_e} & (H,G,\nu) &}
			$$	
			
			Put $V_l=\pi_e^{-1}(\tau( U_l))$ for $l\in\{1,2,\cdots,L'\}$. Since $\pi_e$ is open, one has $\text{cl}(\text{int}(V_l))=V_l$  for $1\le l\le L'$, $\text{int}(V_l)\cap \text{int}(V_{l'})=\emptyset$ for $1\le l<l'\le L'$ and
			$\tilde\nu(\bigcap_{l=1}^{L'}V_l)>0$. By Proposition \ref{p-eliss}, there exists an infinite set $\mathcal{A}=\{g_1,g_2,\cdots\}\subset G$ such that for all $a\in\{1,2,\cdots,L'\}^\mathbb{N}$,
			$ \bigcap_{n=1}^\infty g_n^{-1}\text{int}(V_{a_n})\neq\emptyset.$
			This implies for all $N\in\mathbb{N}$ and $a\in\{1,2,\cdots,L'\}^\mathbb{N}$, $\bigcap_{n=1}^N g_n^{-1}\text{int}(\tau(U_{a_n}))$ is a nonempty open subset of $H$. Since $\tau$ is almost one to one, there is a $h_{a,N}\in \bigcap_{n=1}^N g_n^{-1}\text{int}(\tau(U_{a_n}))$ such that $|\tau^{-1}(h_{a,N})|=1$. This implies $|\tau^{-1}(g_nh_{a,N})|=1$ for $n=1,2,\cdots,N$ and then $\tau^{-1}(g_nh_{a,N})\subset U_{a_n}$. Therefore, $\bigcap_{n=1}^N g_n^{-1}U_{a_n}\neq\emptyset$ for all $a\in\{1,2,\cdots,L'\}^N$ and $N\in\mathbb{N}$. Since all $U_l$ are compact, one has $\bigcap_{n=1}^\infty g_n^{-1}U_{a_n}\neq\emptyset$ for all $a\in\{1,2,\cdots,L'\}^\mathbb{N}$. This implies  the tuple $(U_1,U_2,\cdots ,U_{L'})$ has an infinite independence sets and then  $(A_1,A_2,\cdots,A_{L'})\in \text{IT}_{L'}(H',G)$. 
		\end{proof}
		Let $(X^{K_2} , G)$ be the product system and $M_{K_2}(X)=\{\{x_1,x_2,\cdots,x_{K_2}\}: x_i\in X, 1\le i\le K_2\}$. Define
		$$p:X^{K_2}\to M_{K_2}(X):(x_1,\cdots,x_{K_2})\to \{x_1,\cdots,x_{K_2}\}.$$
		By Claim \ref{claim-1}, $(A_1,A_2,\cdots,A_{L'})\in \text{IT}_{L'}(H',G)\subset \text{IT}_{L'}(M_{K_2}(X),G)$. By Proposition \ref{p-ind} (4), there is $(A_1',A_2',\cdots,A_{L'}')\in \text{IT}_{L'}(X^{K_2},G)$ with $p(A_1',A_2',\cdots,A_{L'}')=(A_1,A_2,\cdots,A_{L'}).$ Then $$\bigcup_{l=1}^{L'}p(A_l')=\bigcup_{l=1}^{L'}A_l\supset\{x_1,x_2,\cdots,x_L\}.$$ For $k\in\{1,2,\cdots,K_2\}$, let $p_k$ be the projection on the $k$-th coordinate. Then there is $k_0\in \{1,2,\cdots,K_2\}$ such that 
		$$|\{p_{k_0}(A_l'):1\le l\le L'\}|\ge\left\lceil\frac{|\bigcup_{l=1}^{L'} p(A_l')|}{K_2}\right\rceil\ge\left\lceil\frac{L}{K_2}\right\rceil= K.$$
		Let $z_1, z_2,\cdots,z_K$ be $K$ distinct points in $\{p_{k_0}(A_l'):1\le l\le L'\}$. Then  one has $(z_1,z_2,\cdots,z_K)\in \text{IT}_{K}(X,G)$. This ends the proof of Theorem \ref{thm-h}.
	\end{proof}
	Now we prove  Theorem \ref{thm-A}.
	\begin{proof}[Proof of Theorem \ref{thm-A}] Suppose that $(X, G)$ is a minimal $G$-system. Let $(H,G)$ be the maximal equicontinuous factor of $(X,G)$ and $\pi$ be the factor map and $\nu$ be the uniquely ergodic measure of $(H,G)$. Since $(H,\nu,G)$ is ergodic, there exists $K_1\in\mathbb{N}\cap \{\infty \}$ such that 
		$|\pi^{-1}(y)|=K_1$   for $\nu$-a.e. $y\in H$. Since  $(X,G)$
		is tame, $(X,G)$ has no non-trivial $2$-IT-tuple. Then by Theorem \ref{thm-h},  $\lceil\frac{K_1}{K}\rceil=1$.  Since $K=\min_{y\in H}|\pi^{-1}y|$, one has $K_1\ge K$, one has $K_1=K$. The proof of Theorem \ref{thm-A} is completed.
	\end{proof}
	
	Now we prove  Theorem \ref{thm-B}.
	\begin{proof}[Proof of Theorem \ref{thm-B}] Suppose that $(X, G)$ is a minimal $G$-system. Let $(H,G)$ be the maximal equicontinuous factor of $(X,G)$ and $\pi$ be the factor map. Assume that $(X,G)$ satisfies the following conditions.
		\begin{itemize}
			\item[(1)] $\pi$ is almost $K_2$ to one for some $K_2\in\mathbb{N}$;
			\item[(2)] $(X,G)$ has no $K$-IT-tuple  for some $K\in\mathbb{N},K\ge 2$.
		\end{itemize}
		Let $\nu$ be the Haar measure on $H$. Assume  $\mu_1,\cdots, \mu_{K_2(K-1)+1}$ are different ergodic measures of $(X,G)$. Then there exists disjoint measurable subsets $V_1,\cdots, V_{K_2(K-1)+1}$ of $X$ such that 
		$$\mu_i(V_i)\ge 1-\frac{1}{2(K_2(K-1)+1)}\text{ for }i=1,2,\cdots, K_2(K-1)+1.$$
		Then 
		$$\nu(\bigcap_{i=1}^{K_2(K-1)+1} \pi(V_i))\ge \frac{1}{2}.$$
		This implies $|\pi^{-1}(h)|\ge K_2(K-1)+1$ for all 	$h\in\bigcap_{i=1}^{K_2(K-1)+1} \pi(V_i)$. By ergodicity, $\nu(\{h\in H:|\pi^{-1}(h)|\ge K_2(K-1)+1\})=1$. Then by Theorem \ref{thm-h}, $\text{IT}_K(X,G)\setminus\bigtriangleup^{(K)}(X)\neq\emptyset$ which is impossible by assumption.  We finish the proof of Theorem \ref{thm-B}.
	\end{proof}
	
	\section*{Acknowledgements}
	We would like to thank  J. Moreira for bringing to our attention the paper of V. Bergelson \cite{Be}. L. Xu was partially supported by  NSFC grant (12031019, 12371197) and the USTC Research Funds of the Double First-Class Initiative. C. Liu and X. Wang  was partially supported by NSFC grant (12090012, 12090010).

	\begin{appendix}
		\section{Proof of Lemma \ref{l-2}} Let $(X,\mu,G)$ be a $G$-measure preserving system. Recall that
		$$\mathcal{I}_\mu(G)=\{B\in\mathcal{B}_X^\mu:gB=B,\text{ for all } g\in G\},$$
		and the Kronecker $\sigma$-algebra  of $(X,\mu,G)$ is defined by $$\mathcal{K}_\mu(G)=\{B\in\mathcal{B}_X^\mu:\{U_g1_B:g\in G\}\text{ is precompact in }L^2(\mu)\}.$$
		In fact, $\mathcal{K}_\mu(G)$ is the $\sigma$-algebra generated by all almost periodic function $f\in L^2(\mu)$, i.e., $\{U_gf:g\in G\}$ is precompact in $L^2(\mu)$. We have the following.
		
		\begin{lem}\label{l-10}Let $I\in\mathbb{N}$ and $(X_i,\mu_i, G),i=1,2$ be $G$-measure preserving systems. Then 
			$$\mathcal{I}_{\mu_1\times\mu_2}(G)\subset \mathcal{K}_{\mu_1}(G)\times\mathcal{K}_{\mu_2}(G).$$
		\end{lem}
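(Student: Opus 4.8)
The plan is to deduce Lemma \ref{l-10} from Bergelson's splitting (Theorem \ref{thm:weak mixing}), applied simultaneously to the two factors and to their diagonal product, together with the tensor description $L^2(\mu_1\times\mu_2)=L^2(\mu_1)\otimes L^2(\mu_2)$. Since every $B\in\mathcal{I}_{\mu_1\times\mu_2}(G)$ is recorded by its indicator $1_B$, it suffices to show that each $G$-invariant $f\in L^2(\mu_1\times\mu_2)$ is $\mathcal{K}_{\mu_1}(G)\times\mathcal{K}_{\mu_2}(G)$-measurable, and then apply this with $f=1_B$. Recall from the text preceding the lemma that $L^2(X_i,\mathcal{K}_{\mu_i}(G),\mu_i)$ is exactly the space $\mathcal{H}_c^{(i)}$ of vectors with precompact $G$-orbit in $L^2(\mu_i)$, and hence $L^2(\mathcal{K}_{\mu_1}(G)\times\mathcal{K}_{\mu_2}(G))=\mathcal{H}_c^{(1)}\otimes\mathcal{H}_c^{(2)}$.

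First I would fix a minimal idempotent $\mathfrak{p}\in\beta G$ and write the Koopman representation of the product as $U_g=U_g^{(1)}\otimes U_g^{(2)}$ on $\mathcal{H}:=L^2(\mu_1)\otimes L^2(\mu_2)$. Applying Theorem \ref{thm:weak mixing} to each $L^2(\mu_i)$ gives orthogonal splittings $L^2(\mu_i)=\mathcal{H}_c^{(i)}\oplus\mathcal{H}_{wm}^{(i)}$, and applying it to the product representation gives $\mathcal{H}=\mathcal{H}_c\oplus\mathcal{H}_{wm}$ with $\mathcal{H}_{wm}=\{\varphi:\mathfrak{p}\text{-}\lim_gU_g\varphi=0\}$ (weak limit). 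The two factor splittings induce the orthogonal decomposition of $\mathcal{H}$ into four $U_g$-invariant summands
$$\mathcal{H}=\big(\mathcal{H}_c^{(1)}\otimes\mathcal{H}_c^{(2)}\big)\oplus\big(\mathcal{H}_c^{(1)}\otimes\mathcal{H}_{wm}^{(2)}\big)\oplus\big(\mathcal{H}_{wm}^{(1)}\otimes\mathcal{H}_c^{(2)}\big)\oplus\big(\mathcal{H}_{wm}^{(1)}\otimes\mathcal{H}_{wm}^{(2)}\big).$$
Writing $A$ for the first summand, the target is to show every invariant vector lies in $A$.

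The crux is to prove that each of the three summands involving a weak-mixing factor is contained in $\mathcal{H}_{wm}$. I would check this on elementary tensors and then use that $\mathcal{H}_{wm}$ is a closed subspace, so it contains the closed span of any spanning set it contains. For $\varphi=\varphi_1\otimes\varphi_2$ with, say, $\varphi_2\in\mathcal{H}_{wm}^{(2)}$ and any elementary test vector $\psi_1\otimes\psi_2$,
$$\langle U_g\varphi,\psi_1\otimes\psi_2\rangle=\langle U_g^{(1)}\varphi_1,\psi_1\rangle\,\langle U_g^{(2)}\varphi_2,\psi_2\rangle.$$
The first factor is bounded by $\|\varphi_1\|\,\|\psi_1\|$, while the second has $\mathfrak{p}$-limit $0$ by definition of $\mathcal{H}_{wm}^{(2)}$; since the $\mathfrak{p}$-limit of a bounded net times a $\mathfrak{p}$-null net is $0$, the product has $\mathfrak{p}$-limit $0$. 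A uniform-boundedness and density argument over the algebraic tensor product then upgrades this to $\mathfrak{p}\text{-}\lim_gU_g\varphi=0$ weakly, i.e. $\varphi\in\mathcal{H}_{wm}$. The same computation (with the factors interchanged, or with both weak-mixing) handles the other two summands. Hence the three cross summands all lie in $\mathcal{H}_{wm}$, so their orthogonal complement $A$ contains $\mathcal{H}_c$.

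Finally, if $B\in\mathcal{I}_{\mu_1\times\mu_2}(G)$ then $U_g1_B=1_B$ for all $g$, so $1_B$ has a one-point, hence precompact, orbit and lies in $\mathcal{H}_c\subseteq A=\mathcal{H}_c^{(1)}\otimes\mathcal{H}_c^{(2)}=L^2(\mathcal{K}_{\mu_1}(G)\times\mathcal{K}_{\mu_2}(G))$. Being an indicator, this forces $B\in\mathcal{K}_{\mu_1}(G)\times\mathcal{K}_{\mu_2}(G)$, which is the assertion. I expect the main obstacle to be exactly the tensor-factor computation above: checking that the product rule for $\mathfrak{p}$-limits combines with weak density of elementary tensors to yield genuine weak $\mathfrak{p}$-convergence of $U_g\varphi$ to $0$. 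The identifications $\mathcal{H}_c^{(i)}=L^2(\mathcal{K}_{\mu_i}(G))$ and $L^2(\mathcal{K}_{\mu_1}(G)\times\mathcal{K}_{\mu_2}(G))=\mathcal{H}_c^{(1)}\otimes\mathcal{H}_c^{(2)}$ are standard, and together with the three orthogonal splittings they are the only remaining ingredients.
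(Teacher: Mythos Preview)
Your proof is correct, but it follows a genuinely different route from the paper's. The paper proceeds via the classical Hilbert--Schmidt argument (after Host--Kra): for a $G$-invariant $F\in L^2(\mu_1\times\mu_2)$ one forms the integral operator $A:L^2(\mu_1)\to L^2(\mu_2)$ with kernel $F$, observes that $G$-invariance of $F$ forces $U_gA=AU_g$ and hence $A^*A$ commutes with the Koopman action on $L^2(\mu_1)$; the finite-dimensional eigenspaces of the compact self-adjoint operator $A^*A$ are then $G$-invariant, so their elements are almost periodic and lie in $L^2(\mathcal{K}_{\mu_1})$, and likewise the range lies in $L^2(\mathcal{K}_{\mu_2})$, which places $F$ in $L^2(\mathcal{K}_{\mu_1})\otimes L^2(\mathcal{K}_{\mu_2})$.

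Your approach instead reuses Theorem~\ref{thm:weak mixing} (which the paper invokes only later, in the proof of Theorem~\ref{lem-4}): fixing a single minimal idempotent $\mathfrak{p}\in\beta G$, you split both factors and the product simultaneously and then verify, via the multiplicativity of ultrafilter limits on bounded scalar nets and a density argument, that any tensor summand containing a $\mathcal{H}_{wm}^{(i)}$ factor lands in $\mathcal{H}_{wm}$ of the product, whence $\mathcal{H}_c\subset\mathcal{H}_c^{(1)}\otimes\mathcal{H}_c^{(2)}$. This is a softer, more ``representation-theoretic'' argument that avoids spectral theory entirely and is very economical given the paper's toolkit. The paper's proof, on the other hand, is self-contained (it does not rely on the $\beta G$ machinery or on Theorem~\ref{thm:weak mixing}) and yields the finer structural information that invariant kernels decompose along finite-dimensional $G$-invariant eigenspaces. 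Both arguments arrive at the same identification $L^2(\mathcal{K}_{\mu_1}\times\mathcal{K}_{\mu_2})=L^2(\mathcal{K}_{\mu_1})\otimes L^2(\mathcal{K}_{\mu_2})$ and the passage from $1_B\in L^2(\mathcal{K}_{\mu_1}\times\mathcal{K}_{\mu_2})$ to $B\in\mathcal{K}_{\mu_1}\times\mathcal{K}_{\mu_2}$ (mod null sets), which is routine.
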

		\begin{proof}This proof is similar to that of \cite[Theorem 16, pp. 53]{HK}. Let $F(x_1,x_2)\in L^2(\mu_1\times\mu_2)$ be $G$-invariant. Define a bounded operator $A:L^2(\mu_1)\to L^2(\mu_2)$ by setting
			$$Ah(x_2)=\int_{X_1}F(x_1,x_2)h(x_1)d\mu_1(x_1)\text{ for }h\in L^2(\mu_1).$$ The operator $A$ is a Hilbert-Schmidt operator and thus is compact. The operator $A^*A$ is a positive, self-adjoint, compact operator on $L^2(\mu_1)$. Therefore, there exists a decreasing sequence $(t_j)$ of positive numbers (either finite or infinite) tending to $0$ as $j\to\infty$ and
			an orthonormal sequence $(f_j)$ in $L^2(\mu_1)$ such that for every $j\in \mathbb{N} $,
			$$A^*Af_j=t_jf_j,$$ and the sublinear space $V_j=\text{span}\{f_j \}$ is orthonogonal  to $\text{ker}(A)$.
			Since $F(x_1,x_2)$ is $G$-invariant, we have for all $g\in G$ and $h\in L^2(\mu_1)$  that 
			\begin{align}\label{e-19-2}\begin{split}
					(U_gA(h))(x_2)&=\int_{X_1}F(x_1,gx_2)h(x_1)d\mu_1(x_1)\\
					&=\int_{X_1}F(gx_1,gx_2)h(gx_1)d\mu_1(x_1)\\
					&=\int_{X_1}F(x_1,x_2)h(gx_1)d\mu_1(x_1)\\
					&=A(U_gh)(x_2).
				\end{split}
			\end{align}
			Since each $U_g^*=U_{g^{-1}}$, one has
			$$U_gA^*A=A^*AU_g,\text{ for all } g\in G.$$
			Therefore, for each $j$, $V_j$ is $G$-invariant.
			This means that for any $f\in V_j$
			$$\{U_gf:g\in G\}\subset \{\tilde f\in V_j:\|\tilde f\|_2=\|f\|_2\}.$$
			Since all $V_j$  are  finite dimension linear spaces, $\{U_gf:g\in G\}$ is precompact in $L^2(\mu_1)$ and then $f\in L^2(X_1,\mathcal{K}_{\mu_1}(G),\mu_1)$. Now let $V$ be the spanning of all $V_j$. Notice that $V_j\bot V_{j'}$ if $j\neq j'$, one has $ V\subset L^2(X_1,\mathcal{K}_{\mu_1}(G),\mu_1)$ and  $A(V^\bot)=0$. Let $W=A(V)$. By 
			\eqref{e-19-2}, $W\subset L^2(X_2,\mathcal{K}_{\mu_2}(G),\mu_2)$. Let $P_V$, $P_W$ and $P_{V\otimes W}$ be the projection on $V$, $W$ and $V\otimes W$ respectively. Then for all $h_1\in L^2(\mu_1), h_2\in L^2(\mu_2)$, one has 
			\begin{align*}
				&\int_{X_1\times X_2}F(x_1,x_2)h_1(x_1)h_2(x_2)d\mu_1\times\mu_2(x_1,x_2)\\&=\int_{ X_2}(Ah_1)h_2d\mu_2\\
				&=\int_{ X_2}(AP_Vh_1)h_2d\mu_2\\
				&=\int_{ X_2}(AP_Vh_1)(P_Wh_2)d\mu_2\\
				&=\int_{X_1\times X_2}F(P_{V\otimes W}(h_1\otimes h_2))d\mu_1\times\mu_2\\
				&=\int_{X_1\times X_2}(P_{V\otimes W}F)(h_1\otimes h_2)d\mu_1\times\mu_2.
			\end{align*}
			This implies 
			$F=P_{V\otimes W}F$ and then $F$ is $\mathcal{K}_{\mu_1}(G)\times\mathcal{K}_{\mu_2}(G)$ measurable.
			Hence 
			$$\mathcal{I}_{\mu_1\times\mu_2}(G)\subset \mathcal{K}_{\mu_1}\times\mathcal{K}_{\mu_2}.$$
			This ends the proof of Lemma \ref{l-10}.
		\end{proof}Now we prove Lemma \ref{l-2}.
		\begin{proof}Let $I\in\mathbb{N}$ and $(X_i,\mu_i, G),i=1,2,\cdots,I$ be a $G$-measure preserving system. If $I=1$, it is clear true. Assume that $I\ge 2$. By Lemma \ref{l-10}, $$\mathcal{I}_{\mu_1\times\mu_2\times\cdots\times\mu_I}\subset \mathcal{B}_X^{\mu_1}\times\mathcal{B}_X^{\mu_2}\times\cdots\times\mathcal{B}_X^{\mu_{i-1}}\times\mathcal{K}_{\mu_i}(G)\times\mathcal{B}_X^{\mu_{i+1}}\times\mathcal{B}_X^{\mu_{i+2}}\times\cdots\times\mathcal{B}_X^{\mu_{I}},$$ for $1\le i\le I.$
			This implies $\mathcal{I}_{\mu_1\times\mu_2\times\cdots\times\mu_I}(G)\subset \mathcal{K}_{\mu_1}(G)\times\mathcal{K}_{\mu_2}(G)\times\cdots\times\mathcal{K}_{\mu_I}(G).$ We finish the proof of Lemma \ref{l-2}. 
		\end{proof}
		\section{Proof of Theorem \ref{th-3}}  
		Firstly we show that $h^{*}_{\mu}(G,\alpha)= H_\mu(\alpha|\mathcal{K}_\mu)$ for a $G$-measure preserving system $(X,\mu,G)$ and $\alpha\in\mathcal{P}_X^\mu$, where $\mathcal{K}_\mu=\mathcal{K}_\mu(G)$ is the Kronecker $\sigma$-algebra of $(X,\mu,G)$. The following lemma is from \cite[pp. 94]{P}.
		\begin{lem} \label{lem-2}Let $(X,\mathcal{B},\mu)$ be a Borel probability space and $r\geq 1$ be a fixed integer. For each $\epsilon >0$, there exists  $\delta=\delta(\epsilon,r)>0$, such that if  $\alpha=\{A_1,A_2,\ldots,A_r\}$ and $\eta=\{B_1,B_2,\ldots,B_r\}$ are any two finite measurable partitions of $(X,\mathcal{B},\mu)$ with $\sum_{j=1}^r\mu(A_j\Delta B_j)< \delta$ then $H_\mu(\alpha|\eta)+H_\mu(\eta|\alpha)<\epsilon$.
		\end{lem}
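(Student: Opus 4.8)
The plan is to prove this classical continuity estimate by localizing the two partitions on the set where they agree. First I would introduce the \emph{agreement set} $D=\bigcup_{j=1}^r (A_j\cap B_j)$ and its complement $E=X\setminus D$. Since $\alpha$ is a partition, $\sum_j\mu(A_j)=1$, and $\mu(A_j)-\mu(A_j\cap B_j)=\mu(A_j\setminus B_j)\le\mu(A_j\Delta B_j)$; summing over $j$ gives $\mu(D)=\sum_j\mu(A_j\cap B_j)>1-\delta$, hence $\mu(E)<\delta$. Note that $D$ and $E$ are each unions of atoms of $\alpha\vee\eta$, so the two-set partition $\zeta=\{D,E\}$ is coarser than $\alpha\vee\eta$, and the set $D$ is symmetric in the roles of $\alpha$ and $\eta$.

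Next I would exploit monotonicity and the chain rule for conditional entropy (using the standard monotonicity facts recalled in the excerpt, with $\eta,\zeta$ identified with the $\sigma$-algebras they generate). Since $\alpha\vee\zeta$ refines $\alpha$, monotonicity in the first argument gives $H_\mu(\alpha|\eta)\le H_\mu(\alpha\vee\zeta|\eta)$, and the standard chain rule $H_\mu(\alpha\vee\zeta|\eta)=H_\mu(\zeta|\eta)+H_\mu(\alpha|\eta\vee\zeta)$ together with $H_\mu(\zeta|\eta)\le H_\mu(\zeta)$ (monotonicity in the conditioning algebra) yields
$$H_\mu(\alpha|\eta)\le H_\mu(\zeta)+H_\mu(\alpha|\eta\vee\zeta).$$

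The crux is to show that $H_\mu(\alpha|\eta\vee\zeta)$ is small. The atoms of $\eta\vee\zeta$ split into the blocks $B_j\cap D$ and $B_j\cap E$. Because $B_i\cap B_j=\emptyset$ for $i\ne j$, one computes $B_j\cap D=A_j\cap B_j\subset A_j$, so on each such block $\alpha$ is (a.e.) constant and its conditional entropy contribution vanishes. Hence only the blocks $B_j\cap E$ contribute, and since each $H_\mu(\alpha\mid B_j\cap E)\le\log r$, summing gives $H_\mu(\alpha|\eta\vee\zeta)\le\mu(E)\log r<\delta\log r$. Meanwhile $H_\mu(\zeta)=\phi(\mu(D))+\phi(\mu(E))$ with $\phi(t)=-t\log t$, and since $\mu(E)<\delta$ this tends to $0$ as $\delta\to0$. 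By the symmetry of $D$ the identical bound holds for $H_\mu(\eta|\alpha)$, so $H_\mu(\alpha|\eta)+H_\mu(\eta|\alpha)\le 2H_\mu(\zeta)+2\delta\log r$. Choosing $\delta=\delta(\epsilon,r)$ small enough that the right-hand side is $<\epsilon$ finishes the proof.

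The routine parts are the measure estimate $\mu(E)<\delta$ and the continuity of $\phi$ at $0$ and $1$; the one genuinely load-bearing observation is that conditioning on the agreement block $A_j\cap B_j$ makes $\alpha$ deterministic, which is exactly what forces the conditional entropy to concentrate on the small set $E$. I expect no serious obstacle beyond assembling these standard facts, the only care needed being to keep the bound uniform in the partitions, which it is, since the relevant quantities $\log r$ and $\phi$ depend only on $r$ and on $\mu(E)$.
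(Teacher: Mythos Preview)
Your argument is correct and is essentially the standard proof of this continuity lemma. Note, however, that the paper does not supply its own proof: it simply cites Walters \cite[p.~94]{P}, where precisely this ``agreement set'' argument (introducing $D=\bigcup_j A_j\cap B_j$, bounding the conditional entropy by $H_\mu(\zeta)$ plus a term supported on the small complement $E$) is carried out. So your proposal matches the referenced proof.
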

		\medskip
		
		By Lemma \ref{lem-2}, we can get the following result.
		\begin{lem}\label{lem-11}
			Let $(X,\mu,G)$ be a $G$-measure preserving system and $B\in \mathcal{B}_X^\mu$. If $\{U_g1_B:g\in G\}$ is precompact in $L^2(\mu)$, then $h^{*}_{\mu}(G,\{B,B^c\})=0.$	
		\end{lem}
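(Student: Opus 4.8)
The plan is to exploit precompactness of the orbit of $1_B$ to force the growth of $H_\mu\big(\bigvee_{i=1}^n g_i^{-1}\alpha\big)$ to be at most $\epsilon n$ up to an additive constant, uniformly over all sequences. The starting observation is that $U_g1_B=1_{g^{-1}B}$, so for $\alpha=\{B,B^c\}$ the translate $g^{-1}\alpha=\{g^{-1}B,g^{-1}B^c\}$ is completely encoded by the single function $U_g1_B$, and $\|U_g1_B-U_{g'}1_B\|_2^2=\mu(g^{-1}B\triangle g'^{-1}B)$. Hence the hypothesis ``$\{U_g1_B:g\in G\}$ precompact in $L^2(\mu)$'' means precisely that this family is totally bounded. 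I would then aim to show $h_\mu^{\mathcal{A}}(G,\alpha)\le\epsilon$ for every sequence $\mathcal{A}=(g_i)_{i\ge 1}$ of $G$ and every $\epsilon>0$, which suffices since $h_\mu^*(G,\alpha)=\sup_{\mathcal{A}}h_\mu^{\mathcal{A}}(G,\alpha)$.

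Fix $\epsilon>0$ and let $\delta=\delta(\epsilon,2)>0$ be the constant furnished by Lemma \ref{lem-2} for $r=2$. By total boundedness I would choose finitely many $g^{(1)},\dots,g^{(N)}\in G$ so that $\{U_{g^{(j)}}1_B\}_{j=1}^N$ is a $\sqrt{\delta/2}$-net for $\{U_g1_B:g\in G\}$. Write $\alpha_i=g_i^{-1}\alpha$ and $\beta_j=(g^{(j)})^{-1}\alpha$. For each $i$ choose $\phi(i)\in\{1,\dots,N\}$ with $\|U_{g_i}1_B-U_{g^{(\phi(i))}}1_B\|_2<\sqrt{\delta/2}$, so that $\mu\big(g_i^{-1}B\triangle(g^{(\phi(i))})^{-1}B\big)<\delta/2$. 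Since $g_i^{-1}(B^c)=(g_i^{-1}B)^c$ and taking complements preserves symmetric differences, the two partitions $\alpha_i$ and $\beta_{\phi(i)}$ have total symmetric difference $<\delta$, so Lemma \ref{lem-2} gives $H_\mu(\alpha_i\mid\beta_{\phi(i)})<\epsilon$ for every $i$.

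Next I would decompose, using the standard inequality $H_\mu(\xi)\le H_\mu(\eta)+H_\mu(\xi\mid\eta)$ with $\xi=\bigvee_{i=1}^n\alpha_i$ and $\eta=\bigvee_{i=1}^n\beta_{\phi(i)}$:
\[
H_\mu\Big(\bigvee_{i=1}^n\alpha_i\Big)\le H_\mu\Big(\bigvee_{i=1}^n\beta_{\phi(i)}\Big)+H_\mu\Big(\bigvee_{i=1}^n\alpha_i\,\Big|\,\bigvee_{i=1}^n\beta_{\phi(i)}\Big).
\]
Because each $\beta_{\phi(i)}$ lies in the fixed finite family $\{\beta_1,\dots,\beta_N\}$, the join $\bigvee_{i=1}^n\beta_{\phi(i)}$ is refined by $\bigvee_{j=1}^N\beta_j$, which has at most $2^N$ atoms, so $H_\mu\big(\bigvee_{i=1}^n\beta_{\phi(i)}\big)\le N\log 2$, a bound \emph{independent of $n$}. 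For the conditional term, subadditivity of conditional entropy together with its monotonicity in the conditioning $\sigma$-algebra (the ``decreases'' property recalled in the Preliminaries) yields
\[
H_\mu\Big(\bigvee_{i=1}^n\alpha_i\,\Big|\,\bigvee_{i=1}^n\beta_{\phi(i)}\Big)\le\sum_{i=1}^n H_\mu\big(\alpha_i\,\big|\,\beta_{\phi(i)}\big)<n\epsilon.
\]
Combining, $H_\mu\big(\bigvee_{i=1}^n\alpha_i\big)\le N\log 2+n\epsilon$, hence $h_\mu^{\mathcal{A}}(G,\alpha)=\limsup_n\frac1n H_\mu\big(\bigvee_{i=1}^n\alpha_i\big)\le\epsilon$. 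Taking the supremum over $\mathcal{A}$ and letting $\epsilon\to0$ gives $h_\mu^*(G,\alpha)=0$.

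The crux — and the only genuinely nontrivial point — is the $n$-independent bound $H_\mu\big(\bigvee_{i=1}^n\beta_{\phi(i)}\big)\le N\log 2$; this is exactly where precompactness is used, since it forces every translate $g_i^{-1}\alpha$ to be $L^2$-approximable by one of the finitely many fixed partitions $\beta_1,\dots,\beta_N$, so the ``coarse'' join never escapes a finite family while the accumulated approximation error contributes only the controllable $n\epsilon$. Everything else is routine bookkeeping with the monotonicity and subadditivity of (conditional) entropy and the translation dictionary $\|U_g1_B-U_{g'}1_B\|_2^2=\mu(g^{-1}B\triangle g'^{-1}B)$.
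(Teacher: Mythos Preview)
Your proof is correct and follows essentially the same approach as the paper's: both use precompactness to produce a finite $L^2$-net, invoke Lemma~\ref{lem-2} to convert $L^2$-closeness of indicators into smallness of conditional entropy, and then control $\frac1n H_\mu(\bigvee_{i=1}^n g_i^{-1}\alpha)$ by standard entropy inequalities. The only cosmetic difference is organizational: the paper takes the net from inside the given sequence and applies the chain rule $H_\mu(\bigvee_{i=1}^n g_i^{-1}\eta)=\sum_i H_\mu(g_i^{-1}\eta\mid\bigvee_{j<i}g_j^{-1}\eta)$, bounding each term with $i>s$ by $\epsilon$, whereas you compare against the auxiliary join $\bigvee_j\beta_j$ and split into a bounded piece plus an $n\epsilon$ piece.
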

		\begin{proof}
			Let $\eta=\{B,B^c\}$. If $\{U_g1_B:g\in G\}$ is precompact in $L^2(\mu)$, then for any infinite sequence $\mathcal{A}=\{ g_i\}_{i\in\mathbb{N}}$ of $G$, $\{U_{g_i}1_B:i\in \mathbb{N}\}$ is precompact in $L^2(\mu)$.  So for $\epsilon >0$, there exists $s\in \mathbb{N}$ such that for any $i\in \mathbb{N}$ there is $j_i\in\{1,2,\cdots,s\}$ such that $$\mu(g_i^{-1}B^c\Delta g_{j_i}^{-1}B^c)=\mu(g_i^{-1}B\Delta g_{j_i}^{-1}B)=\|U_{g_i}1_B-U_{g_{j_i}}1_B\|^2_2<\frac \delta 2.$$ By Lemma \ref{lem-2}, one has  $$H_\mu(g_i^{-1}\eta|g_{j_i}^{-1}\eta)+H_\mu(g_{j_i}^{-1}\eta|g_i^{-1}\eta)<\epsilon\text{ for }i\in\mathbb{N}.$$
			Thus for any $ i>s$, we have  $$H_\mu(g_i^{-1}\eta|\bigvee_{j=1}^{i-1}g_{j}^{-1}\eta)\leq H_\mu(g_i^{-1}\eta|g_{j_i}^{-1}\eta)<\epsilon.$$
			Hence $$h^{\mathcal{A}}_{\mu}(G,\eta)=\limsup\limits_{n\to \infty}\frac{1}{n}\sum_{i=2}^{n}H_\mu(g_i^{-1}\eta|\bigvee_{j=1}^{i-1}g_{j}^{-1}\eta)\leq\epsilon.$$ Let $\epsilon \rightarrow 0.$ We obtain that $h^{\mathcal{A}}_{\mu}(G,\eta)=0.$ By the arbitrary of $\mathcal{A}$, we have  $h^{*}_{\mu}(G,\eta)=0.$ This ends the proof of Lemma \ref{lem-11}.
		\end{proof}
		\medskip
		
		\begin{thm} \label{lem-4}
			Let $(X,\mu,G)$ be a $G$-measure preserving system, and $\alpha\in\mathcal{P}_X^\mu$. Then  $h^{*}_{\mu}(G,\alpha)=H_\mu(\alpha|\mathcal{K}_\mu)$.
			
		\end{thm}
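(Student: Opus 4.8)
The plan is to establish the two inequalities $h^{*}_{\mu}(G,\alpha)\le H_\mu(\alpha|\mathcal{K}_\mu)$ and $h^{*}_{\mu}(G,\alpha)\ge H_\mu(\alpha|\mathcal{K}_\mu)$ separately, writing $\mathcal{K}=\mathcal{K}_\mu$ throughout. The fact I would record at the outset is that, as observed before Lemma \ref{l-10}, $\mathcal{K}$ is precisely the $\sigma$-algebra generated by the almost periodic functions, so in the notation of Theorem \ref{thm:weak mixing} one has $L^2(X,\mathcal{K},\mu)=\mathcal{H}_c$; consequently, for every $A\in\alpha$ the function $\psi_A:=1_A-\mathbb{E}_\mu(1_A|\mathcal{K})$ lies in $\mathcal{H}_{wm}$.

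For the upper bound, fix a sequence $\mathcal{A}=(g_i)_{i\ge1}$ of $G$ and $\epsilon>0$. Since conditional entropy decreases and converges as the conditioning $\sigma$-algebra increases to $\mathcal{K}$, I can choose a finite partition $\tilde\alpha$ whose atoms lie in $\mathcal{K}$ with $H_\mu(\alpha|\tilde\alpha)\le H_\mu(\alpha|\mathcal{K})+\epsilon$. For every $n$ the splitting $H_\mu(\bigvee_{i=1}^n g_i^{-1}\alpha)\le H_\mu(\bigvee_{i=1}^n g_i^{-1}\tilde\alpha)+H_\mu(\bigvee_{i=1}^n g_i^{-1}\alpha\mid\bigvee_{i=1}^n g_i^{-1}\tilde\alpha)$, combined with subadditivity of conditional entropy, the inequality that conditioning on more reduces entropy, and the $G$-invariance of $\mu$, yields $H_\mu(\bigvee_{i=1}^n g_i^{-1}\alpha\mid\bigvee_{i=1}^n g_i^{-1}\tilde\alpha)\le\sum_{i=1}^n H_\mu(g_i^{-1}\alpha\mid g_i^{-1}\tilde\alpha)=nH_\mu(\alpha|\tilde\alpha)$. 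The remaining term is $o(n)$: each atom $\tilde A$ of $\tilde\alpha$ lies in $\mathcal{K}$, so $\{U_g1_{\tilde A}:g\in G\}$ is precompact and Lemma \ref{lem-11} gives $h^{*}_\mu(G,\{\tilde A,\tilde A^c\})=0$; since $\tilde\alpha$ is coarser than $\bigvee_{\tilde A}\{\tilde A,\tilde A^c\}$ and sequence entropy is subadditive over joins, $h^{\mathcal{A}}_\mu(G,\tilde\alpha)=0$, i.e. $\tfrac1n H_\mu(\bigvee_{i=1}^n g_i^{-1}\tilde\alpha)\to0$. Dividing by $n$, taking $\limsup$, then $\epsilon\to0$, and finally the supremum over $\mathcal{A}$ gives $h^{*}_{\mu}(G,\alpha)\le H_\mu(\alpha|\mathcal{K})$.

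For the lower bound I would build a single sequence along which the translates $g_i^{-1}\alpha$ become asymptotically independent conditionally on $\mathcal{K}$. Fix a minimal idempotent $\mathfrak{p}\in\beta G$; by Theorem \ref{thm:weak mixing} and the opening remark, $\mathfrak{p}\text{-}\lim_{g}U_g\psi_A=0$ weakly for each $A\in\alpha$. I then select $g_1,g_2,\dots$ inductively: having chosen $g_1,\dots,g_{n}$, the space of functions measurable with respect to $\bigvee_{j\le n}g_j^{-1}\alpha$ joined with a fixed finite sub-partition of $\mathcal{K}$ is finite dimensional, so the weak $\mathfrak{p}$-limit lets me pick $g_{n+1}$ from the corresponding $\mathfrak{p}$-large set making every $U_{g_{n+1}}\psi_A$ nearly orthogonal to that space; by Lemma \ref{lem-2} this forces the conditional distribution of $g_{n+1}^{-1}\alpha$ given $\bigvee_{j\le n}g_j^{-1}\alpha\vee\mathcal{K}$ to be within $\delta_{n+1}$ of its conditional distribution given $\mathcal{K}$ alone, with $\delta_{n+1}\to0$. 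The chain rule then gives $H_\mu(\bigvee_{i=1}^n g_i^{-1}\alpha\mid\mathcal{K})=\sum_{i=1}^n H_\mu(g_i^{-1}\alpha\mid\bigvee_{j<i}g_j^{-1}\alpha\vee\mathcal{K})\ge\sum_{i=1}^n(H_\mu(\alpha|\mathcal{K})-\delta_i)$, and since $H_\mu(\bigvee_{i=1}^n g_i^{-1}\alpha)\ge H_\mu(\bigvee_{i=1}^n g_i^{-1}\alpha\mid\mathcal{K})$, I obtain $h^{\mathcal{A}}_\mu(G,\alpha)\ge H_\mu(\alpha|\mathcal{K})$ for this $\mathcal{A}$, hence $h^{*}_{\mu}(G,\alpha)\ge H_\mu(\alpha|\mathcal{K})$.

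I expect the main obstacle to be the inductive construction in the lower bound. The weak $\mathfrak{p}$-limit controls only one factor at a time, so the delicate point is to verify that the idempotent property of $\mathfrak{p}$ simultaneously annihilates all the mixed terms arising in the expansion $\prod_i 1_{g_i^{-1}A_{c_i}}=\prod_i\bigl(U_{g_i}\mathbb{E}_\mu(1_{A_{c_i}}|\mathcal{K})+U_{g_i}\psi_{A_{c_i}}\bigr)$, and that the resulting approximate conditional independence is quantitatively strong enough, through Lemma \ref{lem-2}, to transfer to the conditional entropies with error that is summably small uniformly in $n$.
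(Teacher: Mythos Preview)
Your upper bound is essentially the paper's argument verbatim, so there is nothing to add there.

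The lower bound, however, has a genuine gap at exactly the point you flag yourself. You arrange that each $U_{g_{n+1}}\psi_A$ is nearly orthogonal to the \emph{finite-dimensional} space $L^2\bigl(\bigvee_{j\le n}g_j^{-1}\alpha\vee\eta\bigr)$ for a fixed finite $\eta\subset\mathcal{K}$, and then assert that this controls the conditional distribution of $g_{n+1}^{-1}\alpha$ given $\bigvee_{j\le n}g_j^{-1}\alpha\vee\mathcal{K}$. That inference does not go through: the $\sigma$-algebra $\bigvee_{j\le n}g_j^{-1}\alpha\vee\mathcal{K}$ is genuinely infinite, and near-orthogonality to a finite sub-$\sigma$-algebra says nothing about $\mathbb{E}\bigl(U_{g_{n+1}}\psi_A\,\big|\,\bigvee_{j\le n}g_j^{-1}\alpha\vee\mathcal{K}\bigr)$. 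Concretely, this conditional expectation on an atom $B$ of $\bigvee_{j\le n}g_j^{-1}\alpha$ equals $\mathbb{E}(U_{g_{n+1}}\psi_A\cdot 1_B\mid\mathcal{K})/\mathbb{E}(1_B\mid\mathcal{K})$, and weak convergence $U_g\psi_A\to 0$ gives no $L^2$-smallness of $\mathbb{E}(U_g\psi_A\cdot 1_B\mid\mathcal{K})$. Lemma~\ref{lem-2} (closeness of partitions implies closeness of conditional entropies) does not bridge this; it only converts an $L^1$-closeness of partitions, which you never establish, into an entropy estimate. As a result the chain-rule inequality $H_\mu\bigl(\bigvee_{i\le n}g_i^{-1}\alpha\mid\mathcal{K}\bigr)\ge\sum_i\bigl(H_\mu(\alpha\mid\mathcal{K})-\delta_i\bigr)$ is unproved.

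The paper sidesteps this completely by never conditioning on $\mathcal{K}$ in the inductive step. It shows instead that for \emph{any} finite partition $\beta$ and a suitable sequence $(g_i)$ one has $\liminf_i H_\mu(g_i^{-1}\alpha\mid\beta)\ge H_\mu(\alpha\mid\mathcal{K})$. The trick is the one your expansion points toward but does not finish: since $\langle U_{g_i}\psi_A,1_B\rangle\to 0$, one has $\mu(g_i^{-1}A\cap B)\to\langle U_{g_i}\mathbb{E}(1_A\mid\mathcal{K}),1_B\rangle$, and then concavity of $-x\log x$ (Jensen applied to the conditional averages over atoms of $\beta$) gives
\[
\sum_{A,B}-\langle U_{g_i}\mathbb{E}(1_A\mid\mathcal{K}),1_B\rangle\log\frac{\langle U_{g_i}\mathbb{E}(1_A\mid\mathcal{K}),1_B\rangle}{\mu(B)}\;\ge\;\int_X\sum_A -\mathbb{E}(1_A\mid\mathcal{K})\log\mathbb{E}(1_A\mid\mathcal{K})\,d\mu=H_\mu(\alpha\mid\mathcal{K}).
\]
Taking $\beta=\bigvee_{j<i}h_j^{-1}\alpha$ at each stage then yields $H_\mu(h_i^{-1}\alpha\mid\bigvee_{j<i}h_j^{-1}\alpha)\ge H_\mu(\alpha\mid\mathcal{K})-2^{-i}$ inductively, with no need to handle the infinite $\mathcal{K}$ in the conditioning. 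This concavity step is precisely the missing idea in your sketch; once you have it, the detour through $\mathcal{K}$-conditioning and the mixed-term analysis in your final paragraph become unnecessary.
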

		\begin{proof}We firstly, show that 	$h^{*}_{\mu}(G,\alpha)\leq H_\mu(\alpha|\mathcal{K}_\mu).$ Let $\mathcal{A}=\{g_1,g_2,\cdots\}$ be a sequence of $G$.
			Since $(X,\mathcal{B}_X^\mu, \mu)$ is separable, there exist countably many finite measurable partitions  $\{\eta_k\}_{k\in\mathbb{N}}\subset \mathcal{K}_\mu $, such that $\lim\limits_{k \rightarrow \infty }H_{\mu}(\alpha|\eta_k)=H_\mu(\alpha|\mathcal{K}_\mu)$. Hence for any $k\in\mathbb{N}$, we have
			\begin{align*}
				h^\mathcal{A}_{\mu}(G,\alpha)
				=\limsup\limits_{l\to \infty}\frac{1}{l}H_\mu\big(\bigvee_{i=1}^lg_i^{-1}\alpha\big)
				\leq \limsup\limits_{l\to \infty}{\frac{1}{l}H_\mu(\bigvee_{i=1}^lg_i^{-1}(\alpha \vee \eta_k))}
			\end{align*}
			Since $\eta_k\subset \mathcal{K}_\mu$ for all $k\in \mathbb{N}$, one has by Lemma \ref{lem-11} that 
			$$h^{\mathcal{A}}_{\mu}(G,\eta_k)=\lim\limits_{l\to \infty}\frac{1}{l}H_\mu(\bigvee_{i=1}^lg_i^{-1}\eta_k)=0.$$
			Hence
			\begin{align*}
				h^\mathcal{A}_{\mu}(G,\alpha)&\leq \limsup\limits_{l\to \infty}{\frac{1}{l}H_\mu(\bigvee_{i=1}^lg_i^{-1}(\alpha \vee \eta_k))}-\lim\limits_{l\to \infty}\frac{1}{l}H_\mu(\bigvee_{i=1}^lg_i^{-1}\eta_k)\\
				&=\limsup\limits_{l\to \infty}{\frac{1}{l}H_\mu\big(\bigvee_{i=1}^lg_i^{-1}\alpha|\bigvee_{i=1}^lg_i^{-1}\eta_k\big)}\\
				&\leq \limsup\limits_{l\to \infty}{\frac{1}{l}\sum_{i=1}^lH_\mu(g_i^{-1}\alpha|g_i^{-1}\eta_k)}\\
				&=H_\mu(\alpha|\eta_k).
			\end{align*}
			Finally, let $k\rightarrow\infty$. We get
			$h^{\mathcal{A}}_{\mu}(G,\alpha)\leq H_\mu(\alpha|\mathcal{K}_\mu).$
			By the arbitrary of $\mathcal{A}$, we have  
			$h^{*}_{\mu}(G,\alpha)\leq H_\mu(\alpha|\mathcal{K}_\mu).$
			
			Now we prove the opposite side. 
			Given any $\beta\in\mathcal{P}_X^\mu$, write
			$\alpha=\{A_1, A_2, \ldots, A_l\}$ and $\beta=\{B_1, B_2, \ldots,
			B_t\}$.
			By Theorem \ref{thm:weak mixing}, $1_{A_i}-\mathbb{E}(1_{A_i}|\mathcal{K}_{\mu}) \in\mathcal{H}_{wm}$ for $1\le i\le l$.  There exists a sequence 
			$\mathcal{A}=\{g_i\}_{i=1}^\infty$ of $G$ such that
			\begin{equation}\label{eq:3-2}
				\lim_{i\rightarrow \infty}\langle
				U_{g_i}(1_{A_k}-\mathbb{E}(1_{A_k}|\mathcal{K}_{\mu})),1_{B_j}\rangle=0
			\end{equation}
			for any $1\leq k \leq l$ and $1\leq j\leq t$. Hence
			\begin{align*}
				&\liminf_{i\rightarrow \infty}{H_\mu(g_i^{-1}\alpha|\beta)}\\
				=&\liminf_{i\rightarrow \infty}{\sum_{k,j}-\mu\left(g_i^{-1}A_k\cap B_j\right)}
				\log{\left(\frac{\mu(g_i^{-1}A_k\cap B_j)}{\mu(B_j)}\right)}\\
				=&\liminf_{i\rightarrow \infty}{\sum_{k,j}-\langle U_{g_i}1_{A_k},1_{B_j}\rangle\log{\left(\frac{\langle U_{g_i}1_{A_k},1_{B_j}\rangle}{\mu\left(B_j\right)}\right)}}\\
				\overset{\eqref{eq:3-2}}=&\liminf_{i\rightarrow\infty}{\sum_{k,j}-\langle
					U_{g_i}\mathbb{E}\left(1_{A_k}|\mathcal{K}_{\mu}\right),1_{B_j}\rangle\log{\left(\frac{\langle
							U_{g_i}\mathbb{E}\left(1_{A_k}|\mathcal{K}_{\mu}\right),1_{B_j}\rangle}{\mu\left(B_j\right)}\right)}}.
			\end{align*}
			Let $$a_{kj}^i=-\langle
			U_{g_i}\mathbb{E}\left(1_{A_k}|\mathcal{K}_{\mu}\right),1_{B_j}\rangle
			\log{\left(\frac{\langle
					U_{g_i}\mathbb{E}\left(1_{A_k}|\mathcal{K}_{\mu}\right),1_{B_j}\rangle}{\mu\left(B_j\right)}\right)}
			\text{ and }\mu_{B_j}(\cdot)=\frac{\mu(\cdot\cap B_j)}{\mu(B_j)}.$$ By the
			concavity of $-x\log{x}$, we conclude that
			\begin{align*}
				\frac{a_{kj}^i}{\mu\left(B_j\right)}
				&=-\left(\int_{B_j}{\frac{U_{g_i}\mathbb{E}\left(1_{A_k}|\mathcal{K}_{\mu}\right)}
					{\mu\left(B_j\right)}}\,\mathrm{d}\mu\right)\log{\left(\int_{B_j}
					{\frac{U_{g_i}\mathbb{E}\left(1_{A_k}|\mathcal{K}_{\mu}\right)}{\mu\left(B_j\right)}}\,\mathrm{d}\mu\right)}\\
				&=-\left(\int_{B_j}{{U_{g_i}\mathbb{E}\left(1_{A_k}|\mathcal{K}_{\mu}\right)}}\,\mathrm{d}\mu_{B_j}\right)
				\log{\left(\int_{B_j}{{U_{g_i}\mathbb{E}\left(1_{A_k}|\mathcal{K}_{\mu}\right)}}\,\mathrm{d}\mu_{B_j}\right)}\\
				&\geq -\int_{B_j}{{U_{g_i}\mathbb{E}\left(1_{A_k}|\mathcal{K}_{\mu}\right)}}
				\log{\left(U_{g_i}\mathbb{E}\left(1_{A_k}|\mathcal{K}_{\mu}\right)\right)}\,\mathrm{d}\mu_{B_j}\\
				&=-\int_{B_j}{\frac{U_{g_i}\mathbb{E}\left(1_{A_k}|\mathcal{K}_{\mu}\right)}{\mu\left(B_j\right)}}
				\log{\left(U_{g_i}\mathbb{E}\left(1_{A_k}|\mathcal{K}_{\mu}\right)\right)}\,\mathrm{d}\mu.
			\end{align*}
			Therefore, we have
			\begin{align*}
				\sum_{k,j}a_{kj}^i
				&\geq \sum_{k,j}-\int_{B_j}{U_{g_i}\mathbb{E}\left(1_{A_k}|\mathcal{K}_{\mu}\right)}
				\log{\left(U_{g_i}\mathbb{E}\left(1_{A_k}|\mathcal{K}_{\mu}\right)\right)}\,\mathrm{d}\mu\\
				&=\sum_k -\int_X{U_{g_i}\mathbb{E}\left(1_{A_k}|\mathcal{K}_{\mu}\right)}
				\log{\left(U_{g_i}\mathbb{E}\left(1_{A_k}|\mathcal{K}_{\mu}\right)\right)}\,\mathrm{d}\mu\\
				&=\sum_k -\int_X{\mathbb{E}\left(1_{A_k}|\mathcal{K}_{\mu}\right)}
				\log{\left(\mathbb{E}\left(1_{A_k}|\mathcal{K}_{\mu}\right)\right)}\,\mathrm{d}\mu\\
				&=H_{\mu}(\alpha|\mathcal{K}_{\mu}).
			\end{align*}
			This shows that
			\begin{equation}\label{eq:3-3}
				\liminf_{i\to \infty}{H_\mu\left(g_i^{-1}\alpha|\beta\right)} \geq
				H_\mu\left(\alpha|\mathcal{K}_\mu\right).
			\end{equation}
			
			By the above discussion, we can obtain inductively a sequence $\mathcal{A}'=\{h_i\}_{i=1}^{\infty}$ of $G$ such that for each $i \in
			\mathbb{N}$, one has
			$$H_\mu\left(h^{-1}_i\alpha|\bigvee_{j=1}^{i-1} h^{-1}_j\alpha\right)\geq H_\mu(\alpha|\mathcal{K}_\mu)-\frac{1}{2^i}.$$
			Thus,
			\begin{align*}
				H_{\mu}\left(\bigvee_{i=1}^nh_i^{-1}\alpha\right) 
				&=
				H_{\mu}(h_{1}^{-1}\alpha)+H_{\mu}(h_{2}^{-1}\alpha|h_{1}^{-1}\alpha)+\ldots+
				H_{\mu}\left(h_{n}^{-1}\alpha|\bigvee_{j=1}^{n-1}h_{j}^{-1}\alpha\right)\\
				&\geq
				\sum_{i=1}^{n}\left(H_{\mu}(\alpha|\mathcal{K}_{\mu})-\frac{1}{2^{i}}\right)
				\geq n \cdot H_{\mu}(\alpha|\mathcal{K}_{\mu})-1.
			\end{align*}
			Therefore, we have
			\begin{align*}
				h^{\mathcal{A}'}_{\mu}(G,\alpha) &=\limsup_{n \rightarrow \infty}\frac{1}{n}H_{\mu}\left(\bigvee_{i=1}^nh_i^{-1}\alpha\right)\\
				&\geq \limsup_{n\rightarrow\infty}\frac{n
					H_{\mu}(\alpha|\mathcal{K}_{\mu})-1}{n}=H_{\mu}(\alpha|\mathcal{K}_\mu).
			\end{align*}
			This implies $	h^{*}_{\mu}(G,\alpha)\ge 	h^{\mathcal{A}'}_{\mu}(G,\alpha) \ge H_{\mu}(\alpha|\mathcal{K}_\mu)$. 	
			
			Therefore $	h^{*}_{\mu}(G,\alpha)=H_{\mu}(\alpha|\mathcal{K}_\mu)$.  This ends the proof of Theorem \ref{lem-4}.
		\end{proof}

		We need the following lemma from \cite[Lemma 3 in \S4 No. 2]{R}.
		\begin{lem}\label{ll-1}Let $\mu$ be a probability measure on $X$ and $\mathcal{F}$ be a sub-$\sigma$-algebra of $\B_{X}^\mu$.
			Let $\mu=\int \mu_yd\mu(y)$ be the disintegration of $\mu$ with respect to $\mathcal{F}$. Suppose $\mu_y$ is non-atomic for $\mu$-a.e. $y\in X$. If $0\le r\le 1$ and $A\in \B_{X}^\mu$ with $\mu_y(A)\le r$ for $\mu$-a.e. $y\in X$, then there exists $A'\in \B_{X}^\mu$ such that $A\subset A'$ and $\mu_y(A')=r$ for $\mu$-a.e. $y\in X$.
		\end{lem}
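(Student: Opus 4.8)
The plan is to reduce the statement to its complement and then to \emph{uniformize} each conditional measure by a single auxiliary function. Write $C=X\setminus A$ and set $g(x)=r-\mu_x(A)$. Since $\mu_x(A)=\mathbb{E}_\mu(1_A|\mathcal{F})(x)$, the function $g$ is $\mathcal{F}$-measurable, and the hypotheses give $0\le g(x)\le 1-\mu_x(A)=\mu_x(C)$ for $\mu$-a.e.\ $x$. It therefore suffices to produce a measurable set $B\subset C$ with $\mu_x(B)=g(x)$ for $\mu$-a.e.\ $x$; then $A'=A\cup B$ is the desired set, because $A\cap B=\emptyset$ forces $\mu_x(A')=\mu_x(A)+g(x)=r$ and clearly $A\subset A'$.

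First I would construct a Borel injection $\theta\colon X\to[0,1]$. Fixing a countable base $\{U_n\}$ of the compact metric space $X$, the map $\theta(x)=\sum_{n\ge 1}3^{-n}1_{U_n}(x)$ is Borel and injective, since the base separates points. The point of injectivity is that each level set $\{\theta=t\}$ is a singleton, hence $\mu_x$-null by non-atomicity. Next I would form the fibrewise (sub-)distribution function
\[
\phi(x)=\mu_{x}\bigl(\{x'\in C:\ \theta(x')\le\theta(x)\}\bigr),\qquad x\in C.
\]
Because $(x,t)\mapsto G_x(t):=\mu_x(\{x'\in C:\theta(x')\le t\})$ is $\mathcal{F}$-measurable in $x$ (it is a conditional expectation) and monotone in $t$, the composition $\phi$ is $\B_X^\mu$-measurable. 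The key analytic fact is that non-atomicity of $\mu_x$ makes $t\mapsto G_x(t)$ continuous, so that, by the probability integral transform applied on each fibre, the push-forward of $\mu_x|_C$ under $\phi$ is Lebesgue measure on $[0,\mu_x(C)]$ for $\mu$-a.e.\ $x$.

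With $\phi$ in hand I would simply threshold and put
\[
B=\{x\in C:\ \phi(x)<g(x)\}.
\]
This set is $\B_X^\mu$-measurable, since both $\phi$ and the $\mathcal{F}$-measurable function $g$ are. For $\mu$-a.e.\ $x$ one then computes, using $g(x)\le\mu_x(C)$ and the Lebesgue push-forward above, that $\mu_x(B)=(\mu_x|_C)(\{x'\in C:\phi(x')<g(x)\})=g(x)$, which is exactly what is needed; setting $A'=A\cup B$ finishes the argument. The main obstacle is precisely the middle step: one must verify both the joint measurability of the fibrewise distribution function $(x,t)\mapsto G_x(t)$ and that non-atomicity forces the exact Lebesgue push-forward. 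The continuity of $G_x$ coming from non-atomicity is what rules out gaps and lets the threshold hit the value $g(x)$ \emph{on the nose}; disposing of the $\mu$-null set of bad fibres and of the flat parts of $G_x$ are then routine points.
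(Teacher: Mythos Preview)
The paper does not prove this lemma; it simply records it as \cite[Lemma~3 in \S4 No.~2]{R} (Rokhlin) and moves on. Your construction is a correct, self-contained proof in the spirit of Rokhlin's original: embed $X$ injectively into $[0,1]$, form the fibrewise distribution function, and threshold at the $\mathcal{F}$-measurable target $g$. One point you use implicitly is worth stating: when you compute $\mu_x(B)$ you need that for $\mu$-a.e.\ $x$ and $\mu_x$-a.e.\ $x'$ one has $\mu_{x'}=\mu_x$ (so that $\phi(x')=G_x(\theta(x'))$ on the fibre of $x$, and likewise $g(x')=g(x)$). This is the standard idempotence of disintegrations and is exactly what makes the probability integral transform apply ``on each fibre'' as you say; with that remark in place the argument is complete.
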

		Now we prove Theorem \ref{th-3}.
		\begin{proof}[Proof of Theorem \ref{th-3}]	Let $(X,\mu,G)$ be an ergodic $G$-measure preserving system and $\mu=\int_X\mu_xd\mu(x)$ be the disintegration of $\mu$ with respect to $\mathcal{K}_\mu(G)$. 
			
			We prove 	$h^{*}_{\mu}(G)\leq \log k_\mu$ firstly. If $k_\mu=\infty$, this is clear true. Assume that $k_\mu\in\mathbb{N}$. Then for $\alpha\in\mathcal{P}_X^\mu$, $$|\{A\in\alpha:\mu_x(A)>0\}|\le k_\mu\text{ for }\mu\text{-a.e. }x\in X.$$
			Hence by the convex of $-x\log x$ one has 
			\begin{align*}
				H_\mu(\alpha|\mathcal{K}_\mu)&=\int_X\sum_{A\in\alpha}-\mathbb{E}_\mu(1_A|\mathcal{K}_\mu)(x)\log \mathbb{E}_\mu(1_A|\mathcal{K}_\mu)(x)d\mu(x)\\
				&=\int_X\sum_{A\in\alpha}-\mu_x(A)\log \mu_x(A)d\mu(x)\\
				&=\int_X\sum_{A\in\alpha\atop \mu_x(A)>0}-\mu_x(A)\log \mu_x(A)d\mu(x)\\
				&\le \int_X-(\sum_{A\in\alpha\atop \mu_x(A)>0}\mu_x(A))\log \frac{\sum_{A\in\alpha\atop \mu_x(A)>0}\mu_x(A)}{|\{A\in\alpha: \mu_x(A)>0\}|}d\mu(x)\\
				&\le \int_X\log k_\mu d\mu(x)=\log k_\mu.
			\end{align*}
			
			By Theorem \ref{lem-4}, we have $h^{*}_{\mu}(G,\alpha)\leq H_\mu(\alpha|\mathcal{K}_\mu)\leq \log k_\mu$. With the arbitrariness of $\alpha$, we have  
			$h^{*}_{\mu}(G)\leq\log k_\mu .$

			Now we prove $h^{*}_{\mu}(G)\geq k_\mu$.  If  $k_\mu=\infty$, since   $(X,\mu,G)$ is ergodic, $\mu_x$ is atomless  for $\mu$-a.e. $x\in X$.  Then by Lemma \ref{ll-1}, for $k\in\mathbb{N}$ there exists $\alpha\in\mathcal{P}_X^\mu$ depending on $k$ such that
			$$\mu_x(A)=\frac{1}{k}\text{ for }\mu\text{-a.e.} x\in X\text{ and }A\in\alpha.$$ Then  $h^*_\mu(G)\ge h_\mu^*(G,\alpha)= H_\mu(\alpha|\mathcal{K}_\mu)=\log k$. Letting $k\to\infty$, one has $h^*_\mu(G)=\infty$.
			
			If $k_\mu<\infty$, since $(X,\mu,G)$ is ergodic, $\mu_x(y)=\frac{1}{k_\mu}$ for $y\in\text{supp}\mu_x$ for $\mu$-a.e. $x\in X$. Since $x\to\mu_x$ is measurable, there is a measurable partition $\alpha=\{A_1,\cdots,A_{k_\mu}\}$ of $X$ such that 
			$$\mu_x(A)=\frac{1}{k_\mu}\text{ for }\mu\text{-a.e. } x\in X\text{ and }A\in\alpha.$$
			Then $h^*_\mu(G)\ge h_\mu^*(G,\alpha)= H_\mu(\alpha|\mathcal{K}_\mu)=\log k_\mu$.
			
			Therefore $h^*_\mu(G)=\log k_\mu$. This ends the proof of  Theorem \ref{th-3}.
		\end{proof}
	\end{appendix}


\begin{thebibliography}{99}
		\bibitem{A} J. Auslander. A group theoretic condition in topological dynamics. Proceedings of the 18th Summer
		Conference on Topology and its Applications. Topology Proc. 28 (2004), no. 2, 327-334.
		\bibitem{A1}J. Auslander. Minimal flows and their extensions. Elsevier Science Ltd, 1988.
		
		\bibitem{Be}	V. Bergelson.  Minimal idempotents and ergodic ramsey theory. Topics
		in dynamics and ergodic theory, 310 (2003), pp. 8-39.
		\bibitem{B} R. Bowen. Entropy for group endomorphisms and homogeneous spaces. Trans. Am. Math. Soc. 153 (1971)
		401-414.
		\bibitem{D} E. I. Dinaburg. The relation between topological entropy and metric entropy. Sov. Math. 11 (1970) 13-16.
		\bibitem{E0} D. Ellis, R. Ellis and M. Nerurkar. The topological dynamics of semigroup actions. Trans. Amer. Math. Soc.
		353(4) (2001), 1279-1320.
		\bibitem{El}
		R. Ellis. Lectures on Topological Dynamics. W. A. Benjamin, Inc. New York, 1969. xv+211 pp.
		
		\bibitem{EG} R. Ellis and W. Gottschalk. Homomorphisms of transformation groups. Trans. Amer. Math. Soc. 94,	1960, 258-271.
		\bibitem{FG} G. Fuhrmann, E. Glasner, T. J\"ager and C. Oertel. Irregular model sets and tame dynamics. Trans. Amer. Math. Soc. 374 (2021), no. 5, 3703-3734.
		\bibitem{Fu}H. Furstenberg and Y. Katznelson. Idempotents in compact semigroups and Ramsey theory. Israel J. Math.
		68 (1989), 257-270.
		\bibitem{G3}E. Glasner. On tame dynamical systems. Colloq. Math. 105 (2006), no. 2, 283-295.
		\bibitem{G1}E. Glasner. The structure of tame minimal dynamical systems for general groups. Invent. Math. 211 (2018), no. 1, 213-244.
		\bibitem{G2} E. Glasner and M. Megrelishvili. Circularly ordered dynamical systems. Monatsh. Math. 185 (2018), no. 3, 415-441.
		%\bibitem{G0} E. Glasner. The structure of tame minimal dynamical systems. Ergod. Theory Dyn. Syst. 27 (6) (2007) 1819-1837.
		\bibitem{G} T. N. T. Goodman. Topological sequence entropy. Proc. London Math. Soc. (3) 29 (1974), 331-350.
		
		\bibitem{Hi}
		N. Hindman and D. Strauss. Algebra in the Stone-\v{C}ech
		compactification. Theory and applications. Second revised and extended edition [of MR1642231]. De Gruyter Textbook. Walter de Gruyter \& Co. Berlin, 2012. xviii+591 pp.
		
		\bibitem{HK}B.  Host and B. Kra. Nilpotent structures in ergodic theory. Mathematical Surveys and Monographs. 236. American Mathematical Society, Providence, RI, 2018. X+427 pp.
		
		%	\bibitem{H1}W. Huang. Tame systems and scrambled pairs under an Abelian group action. Ergodic Theory Dynam. Systems 26 (2006), no. 5, 1549-1567.
		\bibitem{HLSY}W. Huang, S. Li, S. Shao and X. Ye. Null systems and sequence entropy pairs. Ergodic Theory Dynam. Systems 23 (2003), no. 5, 1505-1523.
		
		\bibitem{HLSY2} W. Huang, Z. Lian, S. Shao and X. Ye. Minimal systems with finitely many ergodic measures. J. Funct. Anal. 280 (2021), no. 12, Paper No. 109000, 42 pp.
		\bibitem{HLY} W. Huang, P. Lu and X. Ye. 
		Measure-theoretical sensitivity and equicontinuity.
		Israel J. Math. 183 (2011), 233-283.
		
		\bibitem{HMY}W. Huang, A. Maass and X. Ye. Sequence entropy pairs and complexity pairs for a measure. Ann. Inst. Fourier (Grenoble) 54 (2004), no. 4, 1005-1028. 
		
		\bibitem{HY}W. Huang and X. Ye. Combinatorial lemmas and applications to dynamics. Adv. Math. 220 (2009), no.
		6, 1689-1716.
		
		
		
		\bibitem{H} P. Hulse. Sequence entropy and subsequence generators. J. London Math. Soc. (2) 26 (1982), 441-450.
		
		\bibitem{KH} D. Kerr and H. Li. Independence in topological and $C^*$-dynamics. Math. Ann. 338 (2007), no. 4, 869-926.
		\bibitem{K} A. K\"ohler, Enveloping semigroups for flows. Proc. Roy. Irish Acad. Sect. A 95 (1995), no. 2, 179-191.
		\bibitem{Ku} A. G. Kushnirenko. On metric invariants of entropy type. Russ. Math. Surv. 22 (1967), 53-61.
		
		\bibitem{LY}C. Liu and K. Yan. Sequence entropy for amenable group actions. Phys. Scr. 98 (2023), 125268.
		\bibitem{Q} J. Qiu, Independence and almost automorphy of higher order. Ergodic Theory Dynam. Systems 43 (2023), no. 4, 1363-1381. 
		\bibitem{R}V. A. Rokhlin. On the fundamental ideas of measure theory.
		Amer. Math. Soc.
		Translation  (1952). no. 71, 55 pp. 
		
		
		\bibitem{V} W. A. Veech. Point-distal systems. Am. J. Math. 92 (1970), 205-242.
		\bibitem{P}P. Walters. An Introduction to Ergodic Theory (Graduate Texts in Mathematic, 79). Springer-Verlag,
		New York, 1982.
		
		
		
		%	\bibitem{Ve}W. A. Veech, The equicontinuous structure relation for minimal abelian transformation
		%	groups, Amer. J. Math. 90 (1968), 723-732.
	\end{thebibliography}
\end{document}